\newtheorem{theorem}{Theorem}[section]
\newtheorem{lemma}[theorem]{Lemma}
\newtheorem{proposition}[theorem]{Proposition}
\newtheorem{corollary}[theorem]{Corollary}
\theoremstyle{definition}
\theoremstyle{remark}
\newtheorem{remark}[theorem]{Remark}
\numberwithin{equation}{section}
\newcommand{\hg}{{}_2F_1}
\newcommand{\arccot}{\textrm{arccot}}
\begin{document}


\title[Eigenvalue Probabilities and Cauchy Weights]{Scaled limit and rate of convergence for the largest eigenvalue from the generalized Cauchy random matrix ensemble}



\author[J. Najnudel]{Joseph Najnudel}
\address{Institut f\"ur Mathematik, Universit\"at Z\"urich, Winterthurerstrasse 190,
8057-Z\"urich, Switzerland}
\email{\href{mailto:joseph.najnudel@math.uzh.ch}{joseph.najnudel@math.uzh.ch}}


\author[A. Nikeghbali]{Ashkan Nikeghbali}
\email{\href{mailto:ashkan.nikeghbali@math.uzh.ch}{ashkan.nikeghbali@math.uzh.ch}}

\author[F. Rubin]{Felix Rubin}\email{\href{mailto:felix.rubin@math.uzh.ch}{felix.rubin@math.uzh.ch}}

\thanks{All authors are supported by the Swiss National Science Foundation (SNF) grant 200021\_119970/1}


\date{\today}

\begin{abstract}
In this paper, we are interested in the asymptotic properties for the largest eigenvalue of the Hermitian random matrix ensemble, called the Generalized Cauchy ensemble $GCyE$, whose eigenvalues PDF is given by $$\textrm{const}\cdot\prod_{1\leq j<k\leq N}(x_j-x_k)^2\prod_{j=1}^N
(1+ix_j)^{-s-N}(1-ix_j)^{-\overline{s}-N}dx_j,$$where $s$ is a complex number such that $\Re(s)>-1/2$ and where $N$ is the size of the matrix ensemble. Using results by Borodin and Olshanski \cite{Borodin-Olshanski}, we first prove that for  this ensemble, the law of the largest eigenvalue divided by $N$ converges to some probability distribution for all $s$ such that $\Re(s)>-1/2$.  Using results by Forrester and Witte \cite{Forrester-Witte2} on the distribution of the largest eigenvalue for fixed $N$, we also express the limiting probability distribution in terms of some non-linear second order differential equation. Eventually, we show that the convergence of the probability distribution function of the re-scaled largest eigenvalue to the limiting one is at least of order  $(1/N)$.

\end{abstract}

\maketitle
\section{Introduction and results}\label{intro}

Let $H(N)$ be the set of Hermitian matrices endowed with  the measure
\begin{equation}\label{Haar measure on H}
\textrm{const}\cdot \det(1+X^2)^{-N}\prod_{1\leq j<k\leq
N}dX_{jk}\prod_{i=1}^N dX_{ii}, \qquad X\in H(N),
\end{equation}
where const is a normalizing constant, such that the total mass of $H(N)$ is equal to one. This measure is the analogue of the normalized Haar measure $\mu_N$ on the unitary group $U(N)$, if one relates $U(N)$ and $H(N)$ via the Cayley transform: $H(N)\ni X\mapsto U=\frac{X+i}{X-i}\in U(N)$. The measure \eqref{Haar measure on H} can be deformed to obtain the following two parameters  probability measure:
\begin{equation}\label{Full Hua-Pickrell}
\textrm{const}\cdot\det((1+iX)^{-s-N})\det((1-iX)^{-\overline{s}-N})\prod_{1\leq
j<k\leq N}dX_{jk}\prod_{i=1}^N dX_{ii},
\end{equation}
where $s$ is a complex parameter such that $\Re{s}>-1/2$ (otherwise the quantity involved in (\ref{Full Hua-Pickrell}) does not integrate as is proved in \cite{Borodin-Olshanski}). Following Forrester and Witte \cite{Forrester-Witte2} and \cite{Forrester-Witte}, we call this measure the \textit{generalized Cauchy measure} on $H(N)$. The name is chosen because if $s=0$ and $N=1$, \eqref{Full Hua-Pickrell} is nothing else than the density of a Cauchy random variable.  This measure can be projected onto the space $\mathds{R}^N/S(N)$, where $S(N)$ is the symmetric group of order $N$ and the quotient space is considered as the space of all (unordered) sets of eigenvalues of matrices in $H(N)$. This projection gives the eigenvalue density
\begin{equation}\label{Hua-Pickrell on H}
\textrm{const}\cdot\prod_{1\leq j<k\leq N}(x_j-x_k)^2\prod_{j=1}^N
w_H(x_j)dx_j,\end{equation}
where $w_H(x_j)=(1+ix_j)^{-s-N}(1-ix_j)^{-\overline{s}-N}$, and where the $x_j$'s denote the  eigenvalues, and as usual, the constant is  chosen so that the total mass of $\mathds{R}^N/S(N)$ is equal to one. $H(N)$, endowed with the generalized Cauchy measure, shall be called the \textit{generalized Cauchy random matrix ensemble}, noted $GCyE$.

If one replaces the weight $w_H(x)$ in (\ref{Hua-Pickrell on H}) by $w_2(x)=e^{-x^2}$, then one obtains the Hermite ensemble. Similarly, the choice  $w_L(x)=x^a e^{-x}$ on $\mathbb{R}_+$ or $w_J(x)=(1-x)^\alpha (1+x)^\beta$ for $-1\leq x\leq 1$ leads to the Laguerre or Jacobi ensemble. The three classical weight functions $w_2$, $w_L$ and $w_J$ occur in the eigenvalue PDF for certain ensembles of Hermitian matrices based on matrices with independent Gaussian entries (see for example \cite{Forrester}). In \cite{Adler-Forrester}, the defining property of a classical weight function in this context was identified as the following fact: If one writes the weight function $w(x)$ of an ensemble as $w(x)=e^{-2V(x)}$, with $2V'(x)=g(x)/f(x)$, $f(x)$ and $g(x)$ being polynomials in $x$, then the operator $n=f(d/dx)+(f'-g)/2$ increases the degree of the polynomials by one, and thus, $\textrm{deg}f\leq2$, and $\textrm{deg}g\leq1$. If $s\in(-1/2,\infty)$, this property actually also holds for the $GCyE$, and we obtain a fourth classical weight function (see also Witte and Forrester \cite{Forrester-Witte}).  However, the construction of the matrix model for the $GCyE$ is different from the construction of the other three classical ensembles: A matrix model for the $GCyE$ will not have independent entries, but one can construct the ensemble via the Cayley transform. Indeed, following Borodin and Olshanski \cite{Borodin-Olshanski} (see also \cite{Forrester, Forrester-Witte2, Forrester-Witte}) the measure \eqref{Full Hua-Pickrell} is, via the Cayley-transform, equivalent to the deformed normalized Haar measure $\textrm{const}\cdot\det((1-U)^{\overline{s}})\det((1-U^*)^s)\mu_N(dU)$, $U\in U(N)$. If we denote by $e^{i\theta_j}$, $j=1,\ldots,N$, the eigenvalues of a unitary matrix with $\theta_j\in[-\pi,\pi]$, the deformed Haar measure can, as in the Hermitian case, be projected to the eigenvalue probability measure to obtain the PDF
\begin{equation}\label{Hua-Pickrell on U}
\textrm{const}\cdot\prod_{1\leq j<k\leq N}
|e^{i\theta_j}-e^{i\theta_k}|^2\prod_{j=1}^N w_U(\theta_j)
d\theta_j,
\end{equation}
where $w_U(\theta_j)=(1-e^{i\theta_j})^{\overline{s}}(1-e^{-i\theta_j})^s,$ and $\theta_j\in[-\pi,\pi]$. This measure is defined on $\mathds{S}^N/S(N)$, where $\mathds{S}$ is the complex unit circle. Note, that this eigenvalue measure has a singularity at $\theta=0$, if $s\neq0$. Borodin and Olshanski \cite{Borodin-Olshanski} studied the measures \eqref{Full Hua-Pickrell}, \eqref{Hua-Pickrell on H} and \eqref{Hua-Pickrell on U} in great detail due to their connections with representation theory of the infinite dimensional unitary group $U(\infty)$.

When $s\in(-1/2,\infty)$, \eqref{Hua-Pickrell on U} is nothing else than the eigenvalue distribution of the circular Jacobi unitary ensemble. This is a generalization of the Circular Unitary ensemble corresponding to the case $s=0$. In fact, if $s=1$, this corresponds to the CUE case with one eigenvalue fixed at one. More generally, for $s\in(-1/2,\infty)$ the singularity at one corresponds, in the log-gas picture, to a impurity with variable charge fixed at one, and mobile unit charges represented by the eigenvalues (see Witte and Forrester \cite{Forrester-Witte}, and also \cite{Forrester-Witte3}). It is the singularity at one that makes the study of this ensemble more difficult than the CUE. In the special case when $s=0$, one can obtain the eigenvalues with PDF (\ref{Hua-Pickrell on H}) from the eigenvalues of the circular unitary ensemble using a stereographic projection (see the book of Forrester \cite{Forrester}, Chapter 2, Section 5 on the Cauchy ensemble). In fact, in this case, we get that \eqref{Hua-Pickrell on H} represents the Boltzmann factor for a one-component log-gas on the real line subject to the potential $2V(x)=N\log(1+x^2)$. This corresponds to an external charge of strength $-N$ placed at the point $(0,1)$ in the plane (this can also be generalized to arbitrary inverse temperature $\beta$ as given in the previous reference). Moreover, note that when $s\neq0$, a construction of a random matrix ensemble with eigenvalue PDF \eqref{Hua-Pickrell on U} is given in \cite{Bourgade-Nikeghbali-Rouault2}.

In this paper, we are interested in the convergence and the asymptotic distribution of the re-scaled largest eigenvalue of a random matrix drawn from the generalized Cauchy ensemble, \textit{for all admissible values of the parameter $s$}, namely $\Re(s)>-1/2$.  Moreover, we will also address the problem of the rate of convergence in such a limit Theorem. In random matrix theory, the distribution of the largest eigenvalue as well as the problem of the convergence of the scaled largest eigenvalue, have received much attention (see e.g. \cite{peche}, \cite{soshnikov2}, \cite{soshnikov}, \cite{Tracy-Widom2}). Also the latter problem on the rate of convergence has been studied, especially in \cite{Garoni-Forrester-Frankel} and  \cite{Choup} for GUE and LUE matrices, and in \cite{Johnstone} as well as in \cite{El-Karouie} for Wishart matrices. 
To deal with the law of the largest eigenvalue, there is a well established methodology (see \cite{Mehta}) for matrix ensembles with eigenvalue PDF of the form
\begin{equation}\label{Hua on H}
\textrm{const}\cdot\prod_{1\leq j<k\leq N}(x_j-x_k)^2\prod_{j=1}^N
w(x_j)dx_j,\end{equation}
where $w(x)$ is a weight function on $\mathbb{R}$.
If one can define the set of monic orthogonal polynomials $\{p_n\}$ with respect to the  weight function $w(x)$ on $\mathds{R}$, then one defines the integral operator $K_N$ on $L_2(\mathds{R})$, associated with the kernel $K_N(x,y):=\sum_{i=0}^{N-1}\frac{p_i(x)p_i(y)}{\|p_i\|^2} \sqrt{w(x) w(y)}$. Using this kernel, the formula to describe probabilities of the form
\begin{equation*}
E(k,J):=\mathbb{P}[\textrm{there are exactly }k\textrm{ eigenvalues inside the interval } J],
\end{equation*}
where $J\subset\mathds{R}$ and $k\in\mathds{N}$, is (see \cite{Mehta}):
\begin{equation*}
E(k,J)=\frac{(-1)^k}{k!}\frac{d^k}{dx^k}\det(I-xK_N)|_{x=1},
\end{equation*}
where the determinant is a Fredholm determinant and the operator $K_N$ is restricted to $J$. Note that if one takes $J=(t,\infty)$ for some $t\in\mathds{R}$, then $E(0,(t,\infty))$ is simply the probability distribution  of the largest eigenvalue, denoted from now on by $\lambda_1(N)$, of a $N\times N$ matrix in the respective ensemble. In their pioneering work \cite{Tracy-Widom}, Tracy and Widom give a system of completely integrable differential equations to show how the probability $E(0,J)$ can be linked to solutions of certain Painlev? differential equations. Tracy and Widom apply their method to the finite Hermite, Laguerre and Jacobi ensembles. Moreover, one can also apply the method to scaling limits of random matrix ensembles, when the dimension $N$ goes to infinity. The sine kernel and its Painlev?-V representation for instance, as obtained by Jimbo, Miwa, M?ri and Sato \cite{JMMS}, arise if one takes the scaling limit in the bulk of the spectrum of the Gaussian Unitary Ensemble and of many other Hermitian matrix ensembles (see e.g. \cite{Kamien-Politzer}, \cite{Mahoux-Mehta}, \cite{Nagao-Wadati} and \cite{Pastur}). On the other hand, if one scales appropriately at the edge of the Gaussian Unitary Ensemble, one obtains an Airy kernel in the scaling limit with a Painlev?-II representation for the distribution of the largest eigenvalue (see \cite{Tracy-Widom2}). Similar results have been obtained for the edge scalings of the Laguerre and Jacobi ensembles, where the Airy kernel has to be replaced by the Bessel kernel and the Painlev?-II equation by a Painlev?-V equation (see Tracy and Widom \cite{Tracy-Widom3}). Soshnikov \cite{soshnikov} gives an overview on scaling limit results for large random matrix ensembles.

For the eigenvalue measure \eqref{Hua-Pickrell on H}, Borodin and Olshanski \cite{Borodin-Olshanski} give the kernel in the finite $N$ case, denoted by $K_N$ in the following (see Theorem \ref{correlation Kernel}), as well as a scaling limit of this kernel, when $N\rightarrow\infty$, denoted by $K_\infty$ (see \eqref{definition of K infinity}). Using the kernel $K_N$, one can set up the system of differential equations in the way of Tracy and Widom for the law $E(0,(t,\infty))$ of the largest eigenvalue $\lambda_1(N)$, for any $t\in\mathds{R}$. In the case of a real parameter $s$, this has been done by Forrester and Witte in \cite{Forrester-Witte}. They obtain a characterization of the law of the largest eigenvalue in terms of a Painlev?-VI equation. More precisely, $(1+t^2)$ times the logarithmic derivative of $E(0,(t,\infty))$ satisfies a Painlev?-VI equation. The same method suitably modified leads to a generalization of this result for  complex $s$. However, the method of Tracy and Widom has the drawback that it only works for $s$ with $\Re{s}>1/2$. Forrester and Witte propose in \cite{Forrester-Witte2} an alternative  method which makes use of $\tau$-function theory, to derive the Painlev?-VI characterization for $E(0,(t,\infty))$ for any $s$ such that $\Re{s}>-1/2$.

To sum up, for the generalized Cauchy ensemble, it is known that for finite $N$, $(1+t^2)$ times the logarithmic derivative of $E(0,(t,\infty))$ satisfies a Painlev?-VI equation, for $t\in\mathds{R}$. The orthogonal polynomials associated with the measure $w_H$ are known as well as the scaling limit of the associated kernel $K_N$, which we note $K_\infty$. One naturally expects $\lambda_1(N)$, appropriately scaled, to converge in law to the probability distribution $F_\infty(t):=\det(I-K_\infty)|_{L_2(t,\infty)}$, for $t>0$ ($t\leq0$ is not permissible in this particular case, as we will see in remark \ref{t must be bigger than 0}). We shall  see below that this is indeed the case for all values of $s$ such that $\Re(s)>-1/2$. A natural question is: does $(1+t^2)$ times the logarithmic derivative of $F_\infty(t)$ also satisfy some non-linear differential equation? And as previously mentioned, what is the rate of convergence to $F_\infty(t)$?

\subsection*{Statement of the main results}
We now state our main theorems. Our results are based on earlier work by Borodin and Olshanski \cite{Borodin-Olshanski} who obtained an explicit form for the orthogonal polynomials associated with the weight $w_H$ as well as the scaling limit for the associated kernel, and Forrester and Witte \cite{Forrester-Witte2} who express, for fixed $N$ and for any complex number $s$, with $\Re(s)>-1/2$, the probability distribution of the largest eigenvalue $\lambda_1(N)$ in terms of some non-linear differential equation. For clarity and to fix the notations, we first state a Theorem of Borodin and Olshanski \cite{Borodin-Olshanski}. We refer the reader to the paper \cite{Borodin-Olshanski} for more information on the  determinantal aspects.  The discussion on the methods we use is postponed to the end of this Section.

Borodin and Olshanski \cite{Borodin-Olshanski} give the correlation kernel for the determinantal point process defined by the measure (\ref{Hua-Pickrell on H}). In fact, the monic orthogonal polynomial ensemble $\{p_m;\;m<\Re{s}+N-\frac{1}{2}\}$ on $\mathds{R}$ associated with the weight $w_H(x)$, is defined by $p_0\equiv1$, and
\begin{equation}\label{BO polynomials (monic)}
p_m(x)=(x-i)^m\hg\left[-m,\;s+N-m,\;2\Re{s}+2N-2m;\;\frac{2}{1+ix}\right],
\end{equation}
where $\hg[a,\;b,\;c;\;x]=\sum_{n\geq0}\frac{(a)_n(b)_n}{(c)_n n!}x^n$ is the Gauss hypergeometric function, and $(x)_n=x(x+1)\ldots(x+n-1)$. Using the Christoffel-Darboux formula and the theory of orthogonal polynomials, the following was proven by Borodin and Olshanski \cite{Borodin-Olshanski}:

\begin{theorem}\label{correlation Kernel}
The $n$-point correlation function ($n\leq N$) for the eigenvalue
distribution (\ref{Hua-Pickrell on H}) is given by
$$\rho_n^{s,N}(x_1,...,x_n)=\det\left(K_{s,N}(x_i,x_j)\right)_{i,j=1}^n,$$
with the kernel $K_{s,N}(x,y)$ defined on $\mathds{R}^2$ is given
by:
\begin{equation}\label{formula corr kernel}
K_N(x,y):=K_{s,N}(x,y)=\sum_{m=0}^{N-1}\frac{p_m(x)p_m(y)}{\|p_m\|^2}\sqrt{w_H(x)w_H(y)}=\frac{\phi(x)\psi(y)-\phi(y)\psi(x)}{x-y},
\end{equation}
with
\begin{equation}\label{phi}
\phi(x)=\sqrt{Cw_H(x)}p_N(x),
\end{equation}
and
\begin{equation}\label{psi}
\psi(x)=\sqrt{Cw_H(x)}p_{N-1}(x),
\end{equation}
where
$w_H(x)=(1+ix)^{-s-N}(1-ix)^{-\overline{s}-N}=(1+x^2)^{-\Re{s}-N}e^{2\Im{s}\textrm{Arg}(1+ix)}$
and
\begin{equation}\label{BO const}
C:=C_{N,s}=\frac{2^{2\Re{s}}}{\pi}\Gamma\left[\begin{array}{ccc}
                                    2\Re{s}+N+1, & s+1, & \overline{s}+1 \\
                                    N, & 2\Re{s}+1, & 2\Re{s}+2
                                  \end{array}
\right].
\end{equation}
Here, we use the notation:
\begin{equation}
\Gamma\left[\begin{array}{cccc} a,&b,&c,&...\\
d,&e,&f,&...\end{array}\right]=\frac{\Gamma(a)\Gamma(b)\Gamma(c)\cdots}{\Gamma(d)\Gamma(e)\Gamma(f)\cdots}.
\end{equation}
Moreover, if $x=y$, the kernel is given by:
\begin{equation}\label{formula corr kernel xx}
K_N(x,x)=\phi'(x)\psi(x)-\phi(x)\psi'(x),
\end{equation}
using the Bernoulli-H?pital rule.

\end{theorem}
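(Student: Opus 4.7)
My plan is to follow the standard machinery for orthogonal polynomial ensembles (cf.\ \cite{Mehta}) specialized to the generalized Cauchy weight $w_H$. First, the Vandermonde identity $\prod_{j<k}(x_j - x_k) = \det(x_j^{k-1})$ is invariant under replacing each monomial $x^k$ by any monic polynomial of degree $k$, so the joint density (\ref{Hua-Pickrell on H}) can be rewritten as a product of two determinants whose entries involve $p_m(x)\sqrt{w_H(x)}$. Applying the Andreief (Gaudin) identity and using the bilinear pairing $\langle p, q \rangle = \int p(x) q(x) w_H(x)\,dx$ (rather than a Hermitian one, since the $p_m$ are complex-valued while $w_H$ is real and positive on $\mathds R$) yields that the $n$-point correlations are determinants of the reproducing kernel
\begin{equation*}
K_N(x,y) = \sqrt{w_H(x) w_H(y)} \sum_{m=0}^{N-1} \frac{p_m(x) p_m(y)}{\|p_m\|^2}, \qquad \|p_m\|^2 := \langle p_m, p_m \rangle.
\end{equation*}

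Second, I would verify that the polynomials (\ref{BO polynomials (monic)}) are monic and orthogonal for this pairing. Monicity is read off the leading $(x-i)^m$ and the unit constant term of the $\hg$ series. Orthogonality is checked by a Cayley-type substitution $u = 2/(1+ix)$ that transforms $\int p_m p_n w_H \, dx$ into a hypergeometric beta integral on a contour, which can be evaluated in closed form via Euler's integral representation and the Gauss summation theorem; cross-terms for $m \neq n$ vanish, and the diagonal gives an explicit product of gamma functions for $\|p_m\|^2$. Applying the Christoffel--Darboux identity
\begin{equation*}
\sum_{m=0}^{N-1} \frac{p_m(x) p_m(y)}{\|p_m\|^2} = \frac{1}{\|p_{N-1}\|^2} \cdot \frac{p_N(x) p_{N-1}(y) - p_{N-1}(x) p_N(y)}{x - y},
\end{equation*}
which holds for any family of monic orthogonal polynomials via their three-term recurrence, and setting $C = 1/\|p_{N-1}\|^2$, $\phi = \sqrt{Cw_H}\,p_N$, $\psi = \sqrt{Cw_H}\,p_{N-1}$, produces the stated form (\ref{formula corr kernel}). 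The diagonal formula (\ref{formula corr kernel xx}) then follows from a single application of L'H\^opital to the indeterminate form at $y = x$.

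The main obstacle is the explicit evaluation of $C$ in the form (\ref{BO const}): starting from the product of gamma functions obtained for $\|p_{N-1}\|^2$, one must repackage it using Euler's reflection and Legendre's duplication formulas, together with Pochhammer-symbol shifting identities for $(s+1)_{N-1}$, $(\overline s +1)_{N-1}$ and $(2\Re s + 1)_{2N-2}$. This step is algebraically delicate but mechanical once the hypergeometric orthogonality computation is in place, and convergence of all integrals involved is justified by the standing hypothesis $\Re(s) > -1/2$.
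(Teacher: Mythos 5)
Your outline is essentially the argument the paper relies on: the paper gives no proof of its own but quotes the result from Borodin--Olshanski, whose derivation is exactly this standard orthogonal-polynomial route (Vandermonde/Andreief reduction, orthogonality of the hypergeometric polynomials with respect to $w_H$, Christoffel--Darboux with $C=1/\|p_{N-1}\|^2$). The only caveat, recorded in the remark following the theorem, is that $p_N$ is a genuine element of $L_2(w_H)$ only for $\Re{s}>1/2$, so for $-1/2<\Re{s}\leq 1/2$ your Christoffel--Darboux step must be understood via analytic continuation in $s$ (or via the modified kernel \eqref{kernel with p tilde}) rather than directly from the three-term recurrence.
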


Note that $p_N$ is well-defined (and in $L_2(w_H)$) only for $\Re{s} > 1/2$. However, it can be analytically continued to $\Re{s} > -1/2$ using the hypergeometric expression $p_{N}(x)=(x-i)^{N}\hg[-N,\;s,\;2\Re{s};\;2/(1+ix)]$, except if $\Re{s}=0$. Moreover, Borodin and Olshanski \cite{Borodin-Olshanski} give a way to get rid of the singularity at $\Re{s}=0$. They introduce the polynomial
\begin{align}\label{p_N tilde}
\tilde{p}_N(x)&=p_N(x)-\frac{iNs}{\Re{s}(2\Re{s}+1)}p_{N-1}(x)\nonumber\\
&=(x-i)^N\hg\left[-N,s,2\Re{s}+1;\frac{2}{1+ix}\right].
\end{align}
This polynomial makes sense for any $s\in\mathds{C}$ with $\Re{s}>-1/2$ and one can define the kernel in Theorem \ref{correlation Kernel} equivalently by:
\begin{equation}\label{kernel with p tilde}
K_N(x,y)=C\frac{\tilde{p}_N(x)p_{N-1}(y)-p_{N-1}(x)\tilde{p}_N(y)}{x-y}\sqrt{w_H(x)w_H(y)}.
\end{equation}

We are interested in the distribution of the largest eigenvalue $\lambda_1(N)$ of a matrix in the $GCyE$. We have already seen that the probability that $\lambda_1(N)$ is smaller than $t$, is
\begin{equation}\label{no ev in 0-infty}
E(0,(t,\infty))=\det(I-K_N)|_{L_2(t,\infty)},
\end{equation}
for any $t\in\mathds{R}$. Hence, we need to consider the operator $K_N$ with kernel $K_N(x,y)$ restricted to the interval $(t,\infty)$ to calculate the probability that no eigenvalue is in the interval $(t,\infty)$.  This restriction is symmetric, with eigenvalues between 0 and 1. It is easy to see that $K_N$, restricted to any subinterval $J$ (or finite union of subintervals) of $\mathds{R}$, has no eigenvalue equal to 1, since $E(0,(t,\infty))>0$ for any $t\in\mathds{R}$. This is true because
$$P (\lambda_1(N) \leq t) = \textrm{cst} \int_{(- \infty,t)^N} \prod (x_j-x_k)^2 \prod w_H (x_j) dx_1\ldots dx_N,$$and the integrand is strictly positive.  Moreover, restricting the correlation function $\rho_n^{s,N}$ of Theorem \ref{correlation Kernel} to $J$ gives
\begin{align}\label{restricted correlation fct}
&\rho_n^{s,N}(x_1,...,x_n)|_{J}=\prod_{j=1}^n\chi_{J}(x_j)\rho_n^{s,N}(x_1,...,x_n)\\
=&\prod_{j=1}^n\chi_{J}(x_j)\det(K_N(x_i,x_j))_{i,j=1}^n=\det(\chi_{J}(x_i)K_N(x_i,x_j)\chi_{J}(x_j))_{i,j=1}^n,\nonumber
\end{align}
where $\chi_J$ denotes the indicator function of the set $J$. Therefore, the restriction of $K_N$ to $J$, denoted by $K_{N,J}$, defines a determinantal process on $J$ with kernel $\chi_{J}(x)K_N(x,y)\chi_J(y)=:K_{N,J}(x,y)$.

Borodin and Olshanski \cite{Borodin-Olshanski} give a scaling limit for the kernel $K_N(x,y)$ given in Theorem \ref{correlation Kernel}. Namely, $\lim_{N\rightarrow\infty}NK_N(Nx,Ny)=K_{\infty}(x,y)$, for any $x,y\in\mathds{R}^\ast=\mathds{R}\backslash \{0\}$, where the kernel $K_\infty$ is defined by
\begin{equation}\label{definition of K infinity}
K_{\infty} (x,y) = \frac{1}{2\pi} \frac{\Gamma(s+1) \Gamma(\overline{s} + 1) }
{\Gamma(2 \Re{s} + 1) \Gamma(2 \Re{s} + 2)} \frac{\tilde{P} (x) Q(y) - Q(x) \tilde{P} (y) }{x-y},
\end{equation}
if $x\neq y$, and,
\begin{equation}\label{definition of K infinity for x=x}
K_{\infty}(x,x)=\frac{1}{2\pi} \frac{\Gamma(s+1) \Gamma(\overline{s} + 1) }
{\Gamma(2 \Re{s} + 1) \Gamma(2 \Re{s} + 2)}(\tilde{P}'(x)Q(x)-Q'(x)\tilde{P}(x)),
\end{equation}
where
\begin{align*}
\tilde{P} (x) &= |2/x|^{\Re{s} } e^{-i/x + \pi \Im{s} \textrm{Sgn} (x) /2}  {}_1F_1 \left[s, 2 \Re{s} + 1;
\frac{2i}{x} \right],\\
Q (x) &= (2/x)|2/x|^{\Re{s} } e^{-i/x + \pi \Im{s} \textrm{Sgn} (x) /2} {}_1F_1 \left[s+1, 2 \Re{s} + 2;
\frac{2i}{x} \right],
\end{align*}
with
\begin{equation*}
{}_1F_1 \left[r, q; x \right]=\sum_{n\geq0}\frac{(r)_n}{(q)_nn!}x^n,
\end{equation*}
for any $r,q,x\in\mathds{C}$.

\begin{remark}\label{K_infty defines a det process}
The kernel $K_\infty$ defines a determinantal point process (see \cite{Borodin-Olshanski} , Theorems IV and 6.1).
\end{remark}

\begin{remark}\label{sine kernel}
If $s=0$, the limiting kernel $K_\infty$ writes as
\begin{equation*}
K_\infty(x_1,x_2)=\frac{1}{\pi}\frac{\sin(1/x_2-1/x_1)}{x_1-x_2}.
\end{equation*}
Under the change of variable $y=\frac{1}{\pi x}$ and taking into account the corresponding change of the differential $dx$, $K_\infty$ translates to the famous sine kernel with correlation function
\begin{equation*}
\rho_n(y_1,\ldots,y_n)=\det\left(\frac{\sin(\pi(y_i-y_j))}{\pi(y_i-y_j)}\right)_{i,j=1}^n,
\end{equation*}
for any $n\in\mathds{N}$ and $y_1,\ldots,y_n\in\mathds{R}$ (see Borodin and Olshanski \cite{Borodin-Olshanski}). \end{remark}
Before stating our main results, we need to introduce one more notation: we note $K_{[N]} (x,y)$ the kernel
\begin{equation}\label{KNN}
K_{[N]} (x,y) := N K_N (Nx,Ny),
\end{equation}and $K_{[N]}$ the associated integral operator. We also recall the definition of the Fredholm determinant: if $K$ is an integral operator with kernel given by $K(x,y)$, then the $k$-correlation function $\rho_k$  is defined by:
\begin{equation*}
\rho_k(x_1,\ldots,x_k) = \det (K(x_i,x_j)_{1 \leq i,j \leq k}).
\end{equation*}
The Fredholm determinant $F$,  from $\mathds{R}_+^*$ to $\mathds{R}$, is then defined by
\begin{equation}\label{Rubin}
F(t) = 1 + \sum_{k \geq 1} \frac{(-1)^k}{k!} \int_{(t, \infty)^k} \rho_k(x_1,\ldots,x_k) dx_1\ldots dx_k.
\end{equation}

\begin{theorem} \label{Fredholm2}
For $s$ such that $\Re{s}>-1/2$ and $t>0$, let $F_N$ be the Fredholm determinant associated with $K_{[N]}$, and let $F_{\infty}$ be the Fredholm determinant associated with $K_\infty$. Then, $F_N$ and $F_{\infty}$ are in $\mathcal{C}^3(\mathds{R}_+^*, \mathds{R})$, and for $p \in \{0,1,2,3\}$, the $p$-th derivative of
$F_N$ (with respect to $t$) converges pointwise to the $p$-th derivative of $F_{\infty}$.
\end{theorem}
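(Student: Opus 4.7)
The plan is to start from the Fredholm expansion \eqref{Rubin} for both $F_N$ and $F_\infty$, and to justify termwise differentiation in $t$ and passage to the limit $N\to\infty$ by dominated convergence. The dependence of each term on $t$ enters only through the boundary of the integration domain $(t,\infty)^k$, so derivatives in $t$ are computed by differentiating under the integral sign and produce only evaluations of the (symmetric) integrand on boundary slices of the form $\{x_1=\dots=x_p=t\}$. Accordingly, the $\mathcal{C}^3$ smoothness and pointwise convergence of the first three derivatives reduce to two ingredients: (i) smoothness and pointwise convergence of the rescaled kernels $K_{[N]}$ together with a few of their partial derivatives on $(0,\infty)^2$; and (ii) a uniform-in-$N$ integrable domination of the correlation functions $\rho_k^{[N]}$ on $[t,\infty)^k$.

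For (i), the pointwise convergence $K_{[N]}(x,y)\to K_\infty(x,y)$ on $\mathds{R}^\ast\times\mathds{R}^\ast$ is precisely the Borodin--Olshanski scaling limit recalled in the introduction, and the coincident-point counterpart $K_{[N]}(x,x)\to K_\infty(x,x)$ follows from the representation \eqref{formula corr kernel xx} combined with the same asymptotics. Because all of the relevant kernels are built from $\sqrt{w_H}$ multiplied by the hypergeometric polynomials $p_{N-1}$ and $\tilde{p}_N$ (and, in the limit, by $\tilde{P}$ and $Q$), which are real-analytic on $(0,\infty)$, the convergence can be differentiated termwise to yield pointwise convergence of low-order partial derivatives of the kernel on compact subsets of $(0,\infty)^2$, which is what is needed to evaluate and differentiate the correlation functions at a boundary point.

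For (ii), I would use the Christoffel--Darboux representation \eqref{kernel with p tilde} together with uniform asymptotics for $\sqrt{w_H(Nx)}\,p_{N-1}(Nx)$ and $\sqrt{w_H(Nx)}\,\tilde{p}_N(Nx)$ as $N\to\infty$ to obtain a bound $|K_{[N]}(x,y)|\leq M(x,y)$ on $[t,\infty)^2$, valid uniformly in $N$, with $M(x,y)^2\leq M(x,x)\,M(y,y)$ and $x\mapsto M(x,x)$ integrable on $(t,\infty)$. Combined with Hadamard's inequality this yields a uniform, summable bound on $|\rho_k^{[N]}(x_1,\dots,x_k)|$, making the series \eqref{Rubin} for $F_N$ (and each of its formally differentiated versions in $t$) absolutely and uniformly convergent in $N$. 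Dominated convergence then transfers the pointwise convergence of each integrand established in step (i) to each integral, and of each term of the series to its limit.

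The main obstacle is ingredient (ii): the pointwise Borodin--Olshanski asymptotics are not sufficient, because one needs estimates on $\sqrt{w_H(Nx)}\,p_{N-1}(Nx)$ and $\sqrt{w_H(Nx)}\,\tilde{p}_N(Nx)$ that are valid uniformly for all $x\geq t>0$, with explicit control of the tail as $x\to\infty$. This should be achievable by starting from integral representations of the \hg\ appearing in \eqref{BO polynomials (monic)} and \eqref{p_N tilde}, performing a saddle-point or Laplace-type analysis, and combining the resulting estimates with the explicit decay $|w_H(Nx)|=(1+N^2x^2)^{-\Re{s}-N}$. Once the uniform domination is in hand, the $\mathcal{C}^3$ regularity of $F_N$ and $F_\infty$ on $\mathds{R}_+^\ast$, and pointwise convergence of the derivatives up to order three, follow by repeated application of dominated convergence to \eqref{Rubin} and its formally differentiated series.
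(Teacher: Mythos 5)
Your overall architecture --- expand the Fredholm determinants as in \eqref{Rubin}, differentiate termwise in $t$, and pass to the limit by dominated convergence using a uniform-in-$N$, integrable, Hadamard-type bound on the correlation functions --- is exactly the strategy of the paper (Lemmas \ref{key estimates for convergence theorem}, \ref{derivation F(t,x)}, \ref{Fredholm} and Proposition \ref{convergence of K}). But two of your steps are genuinely open or would fail as stated. First, the uniform estimates you identify as the main obstacle require no saddle-point or Laplace analysis, and in your write-up they are only announced (``should be achievable''), so the technical heart of the proof is missing. After the rescaling $x\mapsto Nx$ the argument of the hypergeometric polynomial is $2/(1+iNx)=O(1/(N|x|))$ while $|(-n)_k|\leq N^k$, so the finite series for ${}_2F_1[-n,h,a;2/(1+iNx)]$ and each of its $x$-derivatives is dominated term by term, uniformly in $N$, by a convergent series in $2^k/|x|^{k}$; combined with elementary control of the factors $(1\pm i/(Nx))^{(\pm N-\cdot)/2}$ and of the normalizing constants (which tend to $1$), this yields $|\tilde P_N^{(p)}(x)|\leq C(x_0,s,p)|x|^{-p-\Re{s}}$ and $|Q_N^{(p)}(x)|\leq C(x_0,s,p)|x|^{-p-1-\Re{s}}$ for $|x|\geq x_0$, together with pointwise convergence of all derivatives. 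Relatedly, your claim that the convergence of the kernels ``can be differentiated termwise'' because the building blocks are real-analytic is not a valid inference: pointwise convergence of analytic functions does not imply convergence of derivatives; one needs precisely the locally uniform bounds just described (or the direct termwise argument of the paper).

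Second, Hadamard's inequality alone does not dominate what you actually integrate. The second and third $t$-derivatives of \eqref{Rubin} involve $\partial_t\rho_{k+1}(t,x_1,\ldots,x_k)$ and $\partial_t^2\rho_{k+1}(t,x_1,\ldots,x_k)$, and a determinant in which one row and column have been differentiated is no longer the Gram determinant of a positive kernel, so the bound $\det\leq\prod_i K(x_i,x_i)$ does not apply to it. The paper handles this in Lemma \ref{key estimates for convergence theorem} by expanding the determinant along the entries containing $x_1$, bounding the remaining minors by the Gram inequality \eqref{bound on the determinant (in proof of key lemma)}, and inserting bounds on the partial derivatives of the kernel up to order two; those kernel-derivative bounds in turn require control of the divided-difference structure near the diagonal (part (b) of Lemma \ref{fg}), which matters because the boundary terms are evaluated at coincident arguments, where the vanishing \eqref{partial rho equal 0} is also used to obtain the clean formulas for $F'$, $F''$, $F'''$. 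None of this appears in your outline, so the domination of the differentiated series --- and with it the $\mathcal{C}^3$ regularity and the convergence of the derivatives up to order three --- remains unproved.
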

As an immediate consequence, one obtains the following convergence in law for the re-scaled largest eigenvalue:

\begin{corollary}\label{scaling limit for largest eigenvalue with fixed parameter s}
Given the set of $N\times N$ random Hermitian matrices $H(N)$ with the generalized Cauchy probability distribution (\ref{Full Hua-Pickrell}), denote by $\lambda_1(N)$ the largest eigenvalue of such a randomly chosen matrix. Then, the law of $\lambda_1(N)/N$ converges to the distribution of the largest point of the determinantal process on $\mathds{R}^\ast$ described by the limiting kernel $K_\infty(x,y)$ in the following sense:
\begin{equation*}
P\left[\frac{\lambda_1(N)}{N}\leq x_0\right]=\det(I-K_N)|_{L_2(Nx_0,\infty)}\longrightarrow\det(I-K_\infty)|_{L_2(x_0,\infty)},\quad\textrm{as }N\rightarrow\infty,
\end{equation*}
for any $x_0>0$.
\end{corollary}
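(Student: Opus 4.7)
The plan is to reduce the corollary to the $p=0$ case of Theorem \ref{Fredholm2} by a straightforward change of variables. First, by the definition of $\lambda_1(N)$ as the largest eigenvalue and by \eqref{no ev in 0-infty}, the event $\{\lambda_1(N)/N \leq x_0\}$ coincides with $\{\lambda_1(N) \leq N x_0\}$, which in turn is the event that no eigenvalue lies in $(N x_0, \infty)$. Thus
$$P\left[\frac{\lambda_1(N)}{N} \leq x_0\right] = E(0,(Nx_0,\infty)) = \det(I - K_N)|_{L_2(N x_0, \infty)},$$
which accounts for the first equality in the statement.

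Next, I would expand this Fredholm determinant using the series \eqref{Rubin} with correlation functions given by Theorem \ref{correlation Kernel}, and perform the change of variables $x_j = N u_j$ in each $k$-fold integral over $(Nx_0,\infty)^k$. Each change of variable produces a Jacobian $N^k$, and distributing one factor of $N$ along each row of the determinant $\det(K_N(N u_i, N u_j))_{1 \leq i,j \leq k}$ yields $\det(K_{[N]}(u_i,u_j))_{1 \leq i,j \leq k}$ by the defining relation \eqref{KNN}. Summing back, one obtains
$$\det(I - K_N)|_{L_2(Nx_0,\infty)} = F_N(x_0),$$
where $F_N$ is the Fredholm determinant associated with $K_{[N]}$ on $(x_0,\infty)$, exactly as in Theorem \ref{Fredholm2}. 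The only point requiring brief justification is that the series defining $F_N$ converges absolutely so that the change of variables can be made termwise; this is standard once one knows (as is used in Theorem \ref{Fredholm2}) that $K_{[N]}$ restricted to $(x_0,\infty)$ is trace class.

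Finally, applying Theorem \ref{Fredholm2} with $p = 0$, one has $F_N(x_0) \to F_\infty(x_0)$ as $N \to \infty$, and by the analogous series representation $F_\infty(x_0) = \det(I - K_\infty)|_{L_2(x_0,\infty)}$, giving the claimed limit. Since the corollary is explicitly stated as an immediate consequence of Theorem \ref{Fredholm2}, there is no genuine obstacle here; all the analytic difficulty (uniform trace class bounds on $K_{[N]}$, convergence of derivatives, etc.) has already been absorbed into Theorem \ref{Fredholm2}, and the corollary merely unpackages the probabilistic interpretation of its $p=0$ assertion after a change of scale.
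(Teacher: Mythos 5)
Your proposal is correct and matches the paper's treatment: the paper states the corollary as an immediate consequence of Theorem \ref{Fredholm2}, and your argument just makes explicit the identification $P[\lambda_1(N)/N\leq x_0]=\det(I-K_N)|_{L_2(Nx_0,\infty)}=F_N(x_0)$ via the rescaling built into \eqref{KNN}, then invokes the $p=0$ convergence. (The termwise change of variables is even simpler than you suggest, since for finite $N$ the kernel has rank $N$ and the Fredholm series terminates; in any case the absolute convergence is supplied by Lemma \ref{Fredholm}.)
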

\begin{remark}\label{t must be bigger than 0}
Note that in the case of finite $N$, the range of the largest eigenvalue is the whole real line, whereas in the limit case when $N\rightarrow\infty$, the range of the largest eigenvalue is $\mathds{R}_+^*$. This is because an infinite number of points accumulate close to 0 (0 itself being excluded however). The accumulation of the points can be seen from the fact that due to the form of $K_\infty(x,x)$ (see (\ref{definition of K infinity for x=x})), $\lim_{\epsilon\rightarrow\infty}\int_\epsilon^\infty K_\infty(x,x)dx$ diverges.
\end{remark}

Now, define
\begin{equation}\label{theta_infty}
\theta_{\infty}(\tau)=\tau\frac{d\log\det(I-K_{\infty})|_{L_2(\tau^{-1},\infty)}}{d\tau},\qquad \tau>0.
\end{equation}
Using the result of Forrester and Witte \cite{Forrester-Witte2} for the distribution of the largest eigenvalue for fixed $N$ and Theorem \ref{Fredholm2}, we are able to show:
\begin{theorem}\label{convergence of the N-solution to the infinity-solution}
Let $s$ be such that $\Re{s} > -1/2$. Then the function $\theta_{\infty}$ given by (\ref{theta_infty}) is well defined and is a solution to the Painlev?-V equation  on $\mathds{R}_+^*$:
\begin{align}\label{Painleve V for theta}
-\tau^2(\theta''(\tau))^2=&\left[2(\tau\theta'(\tau)-\theta(\tau))+(\theta'(\tau))^2+i(\overline{s}-s)\theta'(\tau)\right]^2\nonumber\\
&-(\theta'(\tau))^2(\theta'(\tau)-2is)(\theta'(\tau)+2i\overline{s}).
\end{align}

\end{theorem}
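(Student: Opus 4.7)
The plan is to derive the stated Painlevé-V equation for $\theta_{\infty}$ by taking an appropriate scaling limit of the $\sigma$-form Painlevé-VI equation established by Forrester and Witte \cite{Forrester-Witte2} for finite $N$, and then passing to the limit using Theorem \ref{Fredholm2}. The starting point is the Forrester--Witte result: for each $N$ and each $s$ with $\Re(s)>-1/2$, the function
$$\sigma_N(t) := (1+t^2)\frac{d}{dt}\log E_N(0,(t,\infty)), \qquad E_N(0,(t,\infty)) = \det(I-K_N)|_{L_2(t,\infty)},$$
satisfies an explicit Painlevé-VI $\sigma$-form equation on $\mathds{R}$, whose coefficients depend on $s$ and $N$.

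Next, I would rewrite this equation in the scaled variable $\tau = N/t$. Using (\ref{KNN}) one has $F_N(1/\tau) = E_N(0,(N/\tau,\infty))$, and a short chain-rule computation gives
$$\sigma_N(N/\tau) = -\left(\frac{\tau}{N}+\frac{N}{\tau}\right)\theta_N(\tau), \qquad \theta_N(\tau) := \tau\frac{d}{d\tau}\log F_N(1/\tau),$$
with analogous formulas expressing the first and second $t$-derivatives of $\sigma_N$ at $t=N/\tau$ in terms of $\theta_N$, $\theta_N'$, $\theta_N''$. Substituting these into the finite-$N$ Painlevé-VI, clearing denominators by multiplying through by the appropriate power of $N$, and retaining the dominant terms, I expect to reach
$$-\tau^2(\theta_N'')^2 = \bigl[2(\tau\theta_N'-\theta_N)+(\theta_N')^2+i(\overline{s}-s)\theta_N'\bigr]^2 - (\theta_N')^2(\theta_N'-2is)(\theta_N'+2i\overline{s}) + O(1/N).$$
This is a classical confluence $\mathrm{PVI}\to\mathrm{PV}$, obtained by sending one of the four singular points of Painlevé-VI to infinity.

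The limit itself is then soft. By Theorem \ref{Fredholm2}, $F_N(1/\tau)\to F_\infty(1/\tau)$ together with derivatives up to order three on $\mathds{R}_+^*$, hence $\theta_N\to\theta_\infty$ pointwise together with $\theta_N'$ and $\theta_N''$; letting $N\to\infty$ in the displayed equation produces the Painlevé-V equation of the statement. For well-definedness of $\theta_\infty$ on $\mathds{R}_+^*$ one needs $F_\infty(1/\tau)>0$: each $F_N(1/\tau)=\mathds{P}(\lambda_1(N)/N\leq 1/\tau)$ is strictly positive by the argument given right after (\ref{no ev in 0-infty}), and positivity passes to the limit because $K_\infty|_{L_2(1/\tau,\infty)}$ is trace class with spectrum in $[0,1)$ (by Remark \ref{K_infty defines a det process} together with the fact, inherited from the finite-$N$ case via the convergence, that $1$ is not an eigenvalue), so its Fredholm determinant is a convergent product of strictly positive factors.

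The main obstacle lies in the confluence step: one must write out the Forrester--Witte PVI explicitly, perform the change of variable $t=N/\tau$, and verify carefully that after multiplication by the correct power of $N$ the leading terms in $N$ reproduce exactly the right-hand side of the stated Painlevé-V equation, with the precise coefficients $-2is$ and $2i\overline{s}$ in the final factor. The remainder of the argument is essentially bookkeeping plus an application of Theorem \ref{Fredholm2}.
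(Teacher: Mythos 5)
Your route for the Painlev\'e part is exactly the paper's: start from the Forrester--Witte $\sigma$-form equation for $\sigma_N(t)=(1+t^2)\frac{d}{dt}\log\det(I-K_N)|_{L_2(t,\infty)}$ (Theorem \ref{Rubin2}), substitute $t=N/\tau$, express $\sigma_N$ and its derivatives through $\theta_N$, $\theta_N'$, $\theta_N''$, divide by the right power of $N$ so that the equation reads as the Painlev\'e-V expression plus $O(1/N)$ (Proposition \ref{convergence of the PVI to the PV equation}), and then let $N\to\infty$ using the $C^3$ convergence of $F_N$ to $F_\infty$ from Theorem \ref{Fredholm2} (Proposition \ref{theta}). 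The confluence computation you defer ("I expect to reach\dots") is indeed just the bookkeeping carried out in Subsection \ref{Scaling Limits}, so that part of your plan is sound and matches the paper.

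The genuine gap is in the well-definedness of $\theta_\infty$, i.e.\ the claim $F_\infty(1/\tau)>0$. Your justification --- strict positivity of each $F_N$ plus the assertion that $1$ is not an eigenvalue of $K_\infty|_{L_2(t,\infty)}$ because this is ``inherited from the finite-$N$ case via the convergence'' --- does not work as stated: a pointwise limit of strictly positive numbers can be zero, and pointwise (or even trace-norm) convergence of the kernels does not prevent the limiting operator from having $1$ in its spectrum even though each approximant does not. Remark \ref{K_infty defines a det process} only gives eigenvalues in $[0,1]$, so the whole issue is precisely to exclude the value $1$, and this is where the paper invests real work (Subsection \ref{Rubin3} together with Lemma \ref{analytic}): assuming $1$ is an eigenvalue with eigenfunction $f$ supported in $(t,\infty)$, one uses that $K_{[N]}$ is an orthogonal projector to pass the contraction bound $\|K_{[N]}\tilde f\|\le\|\tilde f\|$ to the limit, forcing $f=K_{\infty,(t_0,\infty)}f$ to vanish a.e.\ on $(t_0,t)$; this contradicts the analyticity of $x\mapsto\int_t^\infty K_\infty(x,y)f(y)\,dy$ proved in Lemma \ref{analytic}, which requires the explicit bounds on $\tilde P$, $Q$ and their analytic continuations. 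Without an argument of this kind (or some substitute), your proof of the theorem is incomplete: $\theta_\infty$ could a priori fail to be defined at points where $F_\infty$ vanishes.
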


\begin{remark}
This implies in particular the result of Jimbo, Miwa, M?ri and Sato \cite{JMMS} that the sine kernel, which is the special case of the $K_\infty$ kernel with parameter $s=0$ (see remark \ref{sine kernel}), satisfies the Painlev?-V equation \eqref{Painleve V for theta} with $s=0$.
\end{remark}
Eventually, following our initial motivation, we have the following result about the rate of convergence:

\begin{theorem}\label{rate of convergence for large x}
For all $x_0 > 0$, and for $x>x_0$,
\begin{equation*}
\left|P\left[\frac{\lambda_1(N)}{N}\leq x\right]-\det(I-K_\infty)|_{L_2(x,\infty)}\right|\leq\frac{1}{N}C(x_0,s),
\end{equation*}where $C(x_0,s)$ is a constant depending only on $x_0$ and $s$.
\end{theorem}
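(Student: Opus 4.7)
The plan is to compare the two Fredholm determinants
$F_N(x)=\det(I-K_{[N]})|_{L^2(x,\infty)}$ and $F_\infty(x)=\det(I-K_\infty)|_{L^2(x,\infty)}$
directly through their series expansions (\ref{Rubin}). Corollary \ref{scaling limit for largest eigenvalue with fixed parameter s} identifies
$P[\lambda_1(N)/N \leq x]$ with $F_N(x)$, so the theorem reduces to bounding $|F_N(x)-F_\infty(x)|$ term-by-term in $k$. For this I need two quantitative estimates valid for all $u,v\geq x_0$, uniformly in $N$: a pointwise rate
$$|K_{[N]}(u,v)-K_\infty(u,v)| \leq \frac{C_1(x_0,s)}{N}\, h(u)\,h(v),$$
and a decay bound $|K_{[N]}(u,v)|,\, |K_\infty(u,v)| \leq C_2(x_0,s)\, h(u)\,h(v)$, with $h$ a nonnegative function that is square-integrable on $(x_0,\infty)$. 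Given these, applying Hadamard's inequality to the difference of the two $k\times k$ determinants (after a row-by-row telescoping between $K_{[N]}$ and $K_\infty$) bounds the $k$-th term of $F_N-F_\infty$ by $k^{k/2}\,C_3(x_0,s)^k/(N\cdot k!)$, which is summable in $k$ and yields the required $C(x_0,s)/N$ bound.

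To obtain the pointwise rate I would use (\ref{kernel with p tilde}), writing $N K_N(Nu,Nv)$ in terms of $\tilde p_N$ and $p_{N-1}$ evaluated at $Nu,Nv$. Substituting the representations (\ref{BO polynomials (monic)}) and (\ref{p_N tilde}), factoring out the weight $w_H(Nu)^{1/2}w_H(Nv)^{1/2}$ and the normalization $\sqrt{C_{N,s}}$, the limit $N\to\infty$ is exactly the confluent limit $\hg[-N,b_N,c_N;\,2/(1+iNu)] \to {}_1F_1[s',\,2\Re{s}+1;\,2i/u]$ of Gauss into Kummer, and reproduces the factors $\tilde P$, $Q$ in (\ref{definition of K infinity}). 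Quantifying this confluence amounts to comparing the ratios of successive terms of the two power series, which differ by $O(1/N)$, together with a Stirling expansion of (\ref{BO const}) giving $\sqrt{C_{N,s}}=\sqrt{C_\infty}\,(1+O(1/N))$. For $u,v$ in a fixed compact piece of $[x_0,\infty)$ this yields the desired pointwise $1/N$ bound with explicit constants.

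For the decay bound, note that as $u\to\infty$ one has $2i/u\to 0$ and hence ${}_1F_1[s,2\Re{s}+1;2i/u]=1+O(1/u)$; expanding the numerator $\tilde P(u)Q(v)-Q(u)\tilde P(v)$ in (\ref{definition of K infinity}) then shows that the leading terms cancel, producing a factor $(u-v)$ that cancels the denominator and leaves $|K_\infty(u,v)|\leq C\,(uv)^{-\Re{s}-1}$. The same algebraic cancellation occurs in the numerator of (\ref{kernel with p tilde}) for $K_{[N]}$; there the super-polynomial decay of $w_H(Nu)^{1/2}\sim (Nu)^{-\Re{s}-N}$ easily dominates the $(Nu)^N$ growth of $p_N(Nu)$ coming from its leading monomial, so an analogous $(uv)^{-\Re{s}-1}$ bound holds uniformly in $N$. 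Since $\Re{s}>-1/2$, one may take $h(u)$ to be a smooth majorant of $1\wedge u^{-\Re{s}-1}$ and obtain $\int_{x_0}^\infty h^2<\infty$, so that both ingredients combine into the Fredholm-series estimate described above.

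The main obstacle I anticipate is making the confluent asymptotic uniform in the spatial variable over the unbounded range $[x_0,\infty)$: for $u$ in a compact set the power-series comparison gives the $O(1/N)$ rate cleanly, whereas for $u$ large, the argument $2/(1+iNu)$ in the ${}_2F_1$ is small for reasons independent of $N$, so a direct confluent bound becomes delicate. The remedy is to split the integration domain into $[x_0,R]$ and $[R,\infty)$ and exploit the super-exponential decay of the weight on the tail: once $R$ is chosen sufficiently large, the tail contribution to the Fredholm series is already $O(1/N)$ by the decay bound alone, so only the bounded region needs the sharp confluent estimate.
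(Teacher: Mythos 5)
Your overall architecture is the same as the paper's: a pointwise bound $|K_{[N]}(u,v)-K_\infty(u,v)|\leq \frac{1}{N}C(x_0,s)(uv)^{-\Re{s}-1}$, a bound on the difference of two $k\times k$ determinants, and summation of the Fredholm series (the paper exploits positivity via a Gram-type estimate to get $n^2\alpha^{n-1}$ where you use telescoping plus Hadamard to get an extra $k^{k/2}$; both are summable against $1/k!$, so that difference is immaterial). The genuine gap is in how you propose to establish the pointwise rate. Both kernels have the integrable form $\bigl(\tilde{P}(u)Q(v)-Q(u)\tilde{P}(v)\bigr)/(u-v)$, so if you only quantify the confluence of the building blocks (the ${}_2F_1\to{}_1F_1$ limit, the weight, the normalization) at the level of the functions themselves, the resulting estimate for the kernel difference is of order $\frac{1}{N\,|u-v|}$, which is not uniform as $v\to u$; moreover on the diagonal the kernels are defined through derivatives, see \eqref{formula corr kernel xx} and \eqref{definition of K infinity for x=x}. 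To obtain the bound of Lemma \ref{rate of convergence of K_N} one also needs an $O(1/N)$ rate for the \emph{first derivatives} of the rescaled functions $\tilde{P}_N,Q_N$ toward $\tilde{P}',Q'$ (this is \eqref{P converges like 1/N}--\eqref{Q converges like 1/N} with $p=1$), combined with a mean-value representation of the divided difference when $u$ and $v$ are comparable and the direct formula when $|u-v|\geq\max(|u|,|v|)/2$. Your plan never mentions derivatives, and the neighbourhood of the diagonal cannot be excised from the Fredholm integrals, so as written the key kernel estimate is not proved.

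A secondary problem is your remedy for uniformity in the spatial variable. With a \emph{fixed} $R$, the contribution of the region where some variable exceeds $R$, estimated by the decay bound alone, is small but independent of $N$; it can therefore never be $O(1/N)$, and you would be forced to take $R=R(N)\to\infty$ and then control how your compact-set confluence constant grows with $R$. In fact no splitting is needed: the term-by-term comparison of the Gauss and Kummer series, and of the weight factors written as $(1\mp i/(Nu))^{\cdots}$ after extracting the power $|u|^{-\Re{s}-\epsilon}$, is already uniform over all $u\geq x_0$ -- each term of the difference is bounded by roughly $C(x_0,s)\,k^2 x_0^{-k}/N$ times coefficients summable in $k$, which is exactly how Lemma \ref{rate of convergence of K_N} is proved in the paper. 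So the obstacle you anticipate is spurious, while the fix you propose would forfeit the $1/N$ rate; the decay bounds themselves (uniform in $N$) are available from Proposition \ref{convergence of K} and Lemma \ref{derivatives P_N Q_N}, so that ingredient of your plan is sound.
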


Now, we say a few words about the way we prove the above theorems. Our proofs, splitted into several technical lemmas, only use elementary methods; namely, they only involve checking pointwise convergence and domination in all the quantities involved in the Fredholm determinants of $K_{[N]}$ and $K_\infty$. We can then apply dominated convergence to show that the logarithmic derivative of the Fredholm determinant of $K_{[N]}$, as well as its derivatives, converge pointwise to the respective derivatives of the Fredholm determinant of $K_\infty$. This suffices to show that the Fredholm determinant of $K_\infty$ satisfies a Painlev?-V equation because we can write the rescaled finite $N$ Painlev?-VI equation of Forrester and Witte (\cite{Forrester-Witte2}, \cite{Forrester-Witte3} and Theorem \ref{Rubin2} below) as the sum of polynomial functions of the Fredholm determinant of $K_{[N]}$ and their first, second and third derivatives. Moreover, the various estimates and bounds we obtain for the different determinants and functions involved in our problem help us to obtain directly an estimate for the rate of convergence in Corollary \ref{scaling limit for largest eigenvalue with fixed parameter s} (that is Theorem \ref{rate of convergence for large x}).

Given Theorem \ref{correlation Kernel} and the Painlev\'e VI characterization of Forrester and Witte \cite{Forrester-Witte2}, the results contained  in Theorem \ref{Fredholm2} and Corollary \ref{scaling limit for largest eigenvalue with fixed parameter s} are very natural; but yet they have to be rigourously checked. As far as Theorem \ref{convergence of the N-solution to the infinity-solution} is concerned,  Borodin and Deift \cite{Borodin-Deift} obtain  the same equation as (\ref{Painleve V for theta}) from the scaling limit of a Painlev?-VI equation characterizing a general 2F1-kernel similar to our kernel $K_N$ (Section 8 in  \cite{Borodin-Deift}). They claim that it is natural to expect that the appropriately scaled logarithmic derivative of the Fredholm determinant of their 2F1-kernel solves this Painlev?-V equation. In fact, according to our Theorem \ref{convergence of the N-solution to the infinity-solution}, (\ref{theta_infty}) corresponds to their limit, when $N\to\infty$, of the scaled solution of the Painlev?-VI equation  and solves the Painlev\'e-V equation (\ref{Painleve V for theta}). Borodin and Deift's method is based on the combination of Riemann-Hilbert theory with the method of isomonodromic deformation of certain linear differential equations. The method is very powerful and general; however, we were not able to apply it in our situation; moreover, it seems that we would have to restrict ourselves to the values of $s$ such that $0\leq\Re(s)\leq1$. As we shall mention it later in the paper, all the ingredients seem to be there to apply the method of Tracy and Widom \cite{Tracy-Widom}; here again, we were not able to find our way: it seems to us (see next Section) that with this method, we could obtain at best a second order non-linear differential equation for $\theta_\infty$, which is equivalent to (\ref{Painleve V for theta}), but for a restricted range of $s$: $\Re(s)>1/2$, and thus excluding the case $s=0$ of  the sine kernel. On the other hand, our method to prove Theorem \ref{convergence of the N-solution to the infinity-solution} heavily  relies on the result of Forrester and Witte  \cite{Forrester-Witte2} for fixed $N$: hence we do not provide a general method to obtain Painlev\'e equations. However, it is an efficient approach to obtain some information about the rate of convergence in Corollary \ref{scaling limit for largest eigenvalue with fixed parameter s}.

\section{Proof of Theorems \ref{Fredholm2}, and \ref{convergence of the N-solution to the infinity-solution}}
In this Section, we split the proofs of Theorems \ref{Fredholm2}, and \ref{convergence of the N-solution to the infinity-solution} into several technical Lemmas. The notations are those introduced in Section \ref{intro}. Throughout this paper, the notation $C(a_0,a_1,\ldots,a_n)$ stands for a positive constant which only depends on the parameters $a_0,a_1,\ldots,a_n$, and whose value may change from line to line (we shall not be interested in explicit values for the different constants). We first bring in an ODE that $\theta_\infty$ should satisfy; then we prove several technical lemmas about the convergence of the correlation functions and the derivatives of the kernel $K_{[N]}$. We shall use these lemmas to show that $\theta_\infty(t)$ is indeed well defined (i.e. $F_\infty(t)$ is non-zero for any $t>0$) and
to prove Theorems  \ref{Fredholm2} and \ref{convergence of the N-solution to the infinity-solution}.
\subsection{Scaling Limits}\label{Scaling Limits}
We now state a result by Forrester and Witte \cite{Forrester-Witte2} which will play an important role in the proof of Theorem \ref{convergence of the N-solution to the infinity-solution}.
\begin{theorem}\label{Rubin2}
For $\Re(s)>-1/2$, define
\begin{align*}
\sigma(t)=&(1+t^2)\frac{d}{dt}\log\det(I-K_N)|_{L_2(t,\infty)}\\
=&(1+t^2)\frac{d}{dt}\log P[\textrm{there is no eigenvalue inside
}(t,\infty)].
\end{align*}
Then, for $t\in\mathds{R}$, $\sigma(t)$ satisfies the equation:
\begin{align}\label{ODE for sigma 2}
(1+t^2)(\sigma'')^2+4(1+t^2)(\sigma')^3-8t(\sigma')^2\sigma+4\sigma^2(\sigma'-(\Re{s})^2)+8 (t(\Re{s}^2) - \Re{s} \Im{s}\nonumber \\
- N \Im{s} )\sigma \sigma'+4 ( 2t \Im{s} (N+ \Re{s}) - (\Im{s})^2 - t^2 (\Re{s})^2 + N (2 \Re{s} + N) ) (\sigma')^2 = 0.
\end{align}
\end{theorem}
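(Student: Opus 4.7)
The plan is a two-stage approach. First, I would establish \eqref{ODE for sigma 2} under the stronger assumption $\Re(s)>1/2$, where the kernel has the genuine orthogonal-polynomial form of Theorem \ref{correlation Kernel} and the Tracy-Widom machinery for integrable kernels can be applied. Then I would extend the identity to the full strip $\Re(s)>-1/2$ by analytic continuation, using the alternative representation \eqref{kernel with p tilde} in terms of $\tilde p_N$ to guarantee joint regularity in $s$.

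For the first stage I would begin by verifying that $w_H$ is a classical weight in the Adler-Forrester sense. A direct computation gives $w_H'(x)/w_H(x) = g(x)/f(x)$ with $f(x)=1+x^2$ and $g(x)$ an affine polynomial whose coefficients are explicit in $(\Re s,\Im s,N)$. Combining this Pearson-type equation with the three-term recurrence for $p_m$ and the Christoffel-Darboux identity, the functions $\phi,\psi$ of \eqref{phi}-\eqref{psi} satisfy a closed first-order linear system
\[
f(x)\phi'(x) = A(x)\phi(x) + B(x)\psi(x), \qquad f(x)\psi'(x) = -C(x)\phi(x) - A(x)\psi(x),
\]
with $A,B,C$ polynomials of small degree and explicit dependence on $s,\overline{s},N$.

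Next I would feed this system into the Tracy-Widom formalism. Set $J=(t,\infty)$, $R=K_N(I-K_N)^{-1}|_J$, and introduce the resolvent quantities $Q(t)=((I-K_N)^{-1}\phi)(t)$, $P(t)=((I-K_N)^{-1}\psi)(t)$ together with a finite collection of auxiliary inner products of the form $q_{jk}=(x^j\phi,(I-K_N)^{-1}x^k\phi)$ and its $(\phi,\psi)$-mixed analogues, with the powers ranging only up to the degrees appearing in $A,B,C,f$. The standard integrable-kernel identity gives $\frac{d}{dt}\log\det(I-K_N)|_{L^2(t,\infty)}=-R(t,t)$, and Tracy-Widom's commutator trick combined with the system above closes into a finite-dimensional ODE system in $t$ for these variables. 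Writing $\sigma(t)=(1+t^2)(-R(t,t))$, expressing $\sigma,\sigma',\sigma''$ in terms of the Tracy-Widom variables, and eliminating those variables algebraically produces a single identity quadratic in $\sigma''$. Pinning the coefficients by comparing the polynomial structure in $t,s,\overline{s},N$ against the rapid decay of $\sigma$ as $t\to+\infty$ gives precisely \eqref{ODE for sigma 2}. To extend to the remaining range $-1/2<\Re(s)\le 1/2$, I would invoke \eqref{kernel with p tilde}: both $\tilde p_N$ and $p_{N-1}$ are polynomials whose coefficients are entire in $s$, so $K_N(x,y)$ is jointly analytic in $s$ on $\Re(s)>-1/2$, and hence so are the Fredholm determinant and its logarithmic $t$-derivatives. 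Since \eqref{ODE for sigma 2} is polynomial in $\sigma,\sigma',\sigma''$ with coefficients polynomial in $t,s,\overline{s},N$ and already holds on $\Re(s)>1/2$, the identity theorem extends it to the full strip.

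The hard part will be the Tracy-Widom elimination itself: writing down the precise finite-dimensional ODE system, tracking the combinatorics by which $\Re s$, $\Im s$ and $N$ enter each coefficient of $A,B,C$, and then performing the algebraic elimination to recognize the output in exactly the six-term form on the right-hand side of \eqref{ODE for sigma 2} rather than some equivalent but less symmetric shape. An alternative I would seriously consider is to bypass Tracy-Widom entirely and use Okamoto's Hamiltonian / $\tau$-function description of Painlev\'e VI, which is the route actually followed by Forrester and Witte in \cite{Forrester-Witte2}; this has the advantage of treating all $s$ with $\Re(s)>-1/2$ uniformly and producing the $\sigma$-form directly, but it requires machinery further from the elementary Fredholm-determinant level used elsewhere in this paper.
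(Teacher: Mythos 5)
First, note that the paper does not prove this statement at all: Theorem \ref{Rubin2} is quoted from Forrester and Witte \cite{Forrester-Witte2}, whose $\tau$-function derivation covers the whole range $\Re{s}>-1/2$ uniformly, and the remark following the theorem explains precisely why the Tracy--Widom route you take in your first stage is limited to $\Re{s}>1/2$ (the boundary term $(1+a^2)Q(a)R(t,a)\sim a^{1-2\Re{s}}$ at the endpoint $a\to\infty$, and the fact that the coefficients $A,B$ of the recurrence involve $1/\Re{s}$). So your stage 1 is essentially the argument of \cite{Forrester-Witte} (Proposition 4), suitably extended to complex $s$ with $\Re{s}>1/2$, and is plausible provided you actually control the endpoint at $+\infty$ rather than working on $(t,\infty)$ directly as your sketch does.

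The genuine gap is in stage 2. Your continuation argument rests on the claim that $\tilde{p}_N$ and $p_{N-1}$ have coefficients entire in $s$, so that $K_N$, the Fredholm determinant and $\sigma$ are holomorphic in $s$ on $\Re{s}>-1/2$ and the one-variable identity theorem applies. This is false: by \eqref{p_N tilde} the hypergeometric parameter of $\tilde{p}_N$ is $2\Re{s}+1$, the parameters of $p_{N-1}$ in \eqref{BO polynomials (monic)} involve $2\Re{s}+2N-2m$, the weight $w_H(x)=(1+x^2)^{-\Re{s}-N}e^{2\Im{s}\textrm{Arg}(1+ix)}$ and the constant \eqref{BO const} involve $\Re{s}$ as well, and the target equation \eqref{ODE for sigma 2} itself has coefficients in $\Re{s}$ and $\Im{s}$. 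All of these depend on $s$ and $\overline{s}$ separately, i.e.\ they are real-analytic in $(\Re{s},\Im{s})$ but not holomorphic in $s$, so "the identity theorem in $s$" cannot be invoked. To repair this you would have to either (i) fix $\Im{s}$, complexify $\Re{s}$, and prove that for each fixed $t$ the quantities $\sigma(t),\sigma'(t),\sigma''(t)$ extend real-analytically in $\Re{s}$ along the connected interval $(-1/2,\infty)$, or (ii) replace $\overline{s}$ by an independent complex variable $s'$ and prove joint holomorphy in $(s,s')$ together with a uniqueness-set argument for the totally real slice $s'=\overline{s}$. In either case the required analyticity of the Fredholm determinant (and of its first three $t$-derivatives, with locally uniform bounds justifying termwise differentiation of the Fredholm series, and non-vanishing of the determinant near the real parameter set) is a substantive piece of work that your proposal does not supply; note also that the continuation must be applied to the final identity \eqref{ODE for sigma 2} and not to the Tracy--Widom system, whose coefficients are singular at $\Re{s}=0$. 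As it stands, the full-range statement is only justified by citing the $\tau$-function derivation of \cite{Forrester-Witte2}, which is exactly what the paper does.
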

\begin{remark}
The ODE (\ref{ODE for sigma 2}) is equivalent to the master Painlev?
equation (SD-I) of Cosgrove and Scoufis \cite{Cosgrove-Scoufis}.
Cosgrove and Scoufis, show that the solution of this equation can be
expressed in terms of the solution of a Painlev?-VI equation using a
B?cklund transform. In the case of $s$ real, this transformation is described in Forrester and Witte \cite{Forrester-Witte}.
\end{remark}
\begin{remark}
One can also attempt to use the general method introduced by Tracy
and Widom in \cite{Tracy-Widom} for kernels of the form (\ref{formula corr kernel})  to prove the above Theorem. Their method establishes a system of PDE's, the so called Jimbo-Miwa-M?ri-Sato equations, which can be reduced to a Painlev?-type equation. The PDE's consist of a set of universal equations and a set of equations depending on the specific form of the following recurrence differential equation for $\phi$ and $\psi$:
\begin{equation}\label{General recurrence equation for phi and psi}
\begin{array}{c}
 m(x)\phi'(x)=A(x)\phi(x)+B(x)\psi(x) \\[5pt]
 m(x)\psi'(x)=-C(x)\phi(x)-A(x)\psi(x),
\end{array}
\end{equation}
where $A,B,C$ and $m$ are polynomials in $x$. Doing the calculations for the case of the determinantal process with kernel $K_N$, one obtains that for $\phi$ and $\psi$ given in Theorem \ref{correlation Kernel}, the recurrence equations (\ref{General recurrence equation for phi and
psi}) hold with:
\begin{align*}
m(x)&=1+x^2,\\
A(x)&=-x\Re{s}+\Im{s}\left(1+\frac{N}{\Re{s}}\right),\\
B(x)&=\frac{|s|^2}{\Re{s}^2}N\frac{2\Re{s}+N}{2\Re{s}+1},\\
C(x)&=2\Re{s}+1.
\end{align*}
Note that this only makes sense if $\Re{s}\neq0$.
One can then show that for $t\in\mathds{R}$, and $\Re(s)>1/2$, the equation (\ref{ODE for sigma 2}) holds. In the case of $s\in(1/2,\infty)\subset\mathds{R}$, this Theorem was obtained in this way by Forrester and Witte in \cite{Forrester-Witte} (Proposition 4). We would like to shortly explain the reason why we were able to make this method work only for $\Re(s)>1/2$. Indeed, the method of Tracy and Widom  has originally been developed for finite intervals (or unions of finite intervals). If one applies the method to the case of a semi-infinite interval $(t,\infty)$, one has to consider an interval $(t,a)$, where $a>t$. Then, one writes down the PDE's of Tracy and Widom for that interval and takes the limit in all the equations as $a\rightarrow\infty$. Note that the variables in these PDE's are the end-points $t$ and $a$ of the interval. It is clear, that one has to be careful about the convergence of the quantities involved in these equations, when $a\rightarrow\infty$. In particular, one needs in our case that  the term $(1+a^2)Q(a)R(t,a)$, where $R(x,y)$ is the  kernel of the resolvent operator $K_{N,J}(1-K_{N,J})^{-1}$, and $Q(x)=(I-K_{N,J})^{-1}\phi(x)$, which is of order $a^{1-2\Re{s}}$, tends to zero, when $a\rightarrow\infty$. This implies the restriction $\Re{s}>1/2$. One might encounter the same type of obstacle in an attempt to prove Theorem \ref{convergence of the N-solution to the infinity-solution} with this method (we will give  the corresponding recurrence equations for $\phi$ and $\psi$ in the case of $K_\infty$ in Remark \ref{infinity solution via Tracy-Widom}).
\end{remark}

We now show  that when $N\rightarrow\infty$, the ODE (\ref{ODE for sigma 2}) converges to a
$\sigma$-version of the Painlev?-V equation. This limiting equation is also given in Borodin and Deift \cite{Borodin-Deift} (Proposition 8.14). Borodin and Deift obtain this equation as a scaling limit of a Painlev?-VI equation characterizing their 2F1-kernel. However, their 2F1-kernel is different from our kernel $K_N$.

Set for $\tau>0$,
\begin{equation}\label{theta (change of variable for the ODE)}
\theta(\tau):=\theta_N(\tau):=\tau\frac{d\log\det(1-K_N)|_{L_2(N\tau^{-1},\infty)}}{d\tau},
\end{equation} where  $R(x,y)$ is the  kernel of the resolvent operator $K_{N,J}(1-K_{N,J})^{-1}$.
Then,
\begin{equation*}
\theta(\tau)=\tau\left(-\frac{N}{\tau^2}\right)R\left(\frac{N}{\tau},\frac{N}{\tau}\right)=-\frac{N}{\tau}\left[\frac{\sigma\left(\frac{N}{\tau}\right)}{1+\frac{N^2}{\tau^2}}\right].
\end{equation*}
It follows that
\begin{align*}
\sigma\left(\frac{N}{\tau}\right)=&-\theta(\tau)\left(\frac{\tau}{N}+\frac{N}{\tau}\right),\\
\sigma'\left(\frac{N}{\tau}\right)=&\frac{\tau^2}{N^2}(\tau\theta'(\tau)+\theta(\tau))+(\tau\theta'(\tau)-\theta(\tau)),\\
\sigma''\left(\frac{N}{\tau}\right)=&-\frac{\tau^3}{N^3}[4\tau\theta'(\tau)+2\theta(\tau)+\tau^2\theta''(\tau)]-\frac{\tau^3}{N}\theta''(\tau).
\end{align*}
Now, put this into the ODE (\ref{ODE for sigma 2}) with
$t=\frac{N}{\tau}$. After dividing by $N^2$, we obtain:
\begin{align*}
&\left(\frac{1}{\tau^2}\right)^2(\tau^3\theta''(\tau))^2+4\left(\frac{1}{\tau^2}\right)(\tau\theta'(\tau)-\theta(\tau))^3+\frac{8}{\tau}(\tau\theta'(\tau)-\theta(\tau))^2\frac{\theta(\tau)}{\tau}\\
&+4\left(\frac{\theta(\tau)}{\tau}\right)^2(\tau\theta'(\tau)-\theta(\tau)-(\Re{s})^2)-8\left(\frac{(\Re{s})^2}{\tau}-\Im{s}\right)\frac{\theta(\tau)}{\tau}(\tau\theta'(\tau)-\theta(\tau))\\
&+4\left[2\frac{\Im{s}}{\tau}-\frac{(\Re{s})^2}{\tau^2}+1\right](\tau\theta'(\tau)-\theta(\tau))^2= O(N^{-1}).
\end{align*}
This gives
\begin{align*}
-\tau^2&(\theta''(\tau))^2=4\left\{(\theta'(\tau))^2(\tau\theta'(\tau)-\theta(\tau)-(\Re{s})^2)+2\Im{s}\,\theta'(\tau)(\tau\theta'(\tau)-\theta(\tau))\right.\\
&\left.+(\tau\theta'(\tau)-\theta(\tau))^2\right\}+O(N^{-1}).
\end{align*}
Now if one neglects the terms of order $O(N^{-1})$, it is easy to see that this is precisely equation (\ref{Painleve V for theta}).  But this is also exactly the $\sigma$-form of the Painlev?-V equation in Borodin and Deift \cite{Borodin-Deift}, Proposition 8.14.

Hence, $\theta_N(\tau)(=\theta(\tau))$ satisfies a differential equation which
tends to the $\sigma$-Painlev?-V equation and we have the following Proposition:

\begin{proposition}\label{convergence of the PVI to the PV equation}
The ODE (\ref{ODE for sigma 2}) with the change of variable $t=N/\tau$, $\tau>0$, is solved by $\theta_N(\tau)$, and is of the form
\begin{equation*}
\sum_{k=0}^m N^{-k}\frac{P_k(\tau,\theta_N(\tau),\theta_N'(\tau),\theta_N''(\tau))}{\tau^q}=0,
\end{equation*}
where $m$ and $q$ are universal integers and the $P_k$'s are polynomials which are independent of $N$. Moreover, $P_0(\tau,\theta_N(\tau),\theta_N'(\tau),\theta_N''(\tau))\tau^{-q}$ corresponds to the $\sigma$-form of the Painlev?-V
equation (\ref{Painleve V for theta}).
\end{proposition}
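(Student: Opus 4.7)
The plan is to make the formal calculation immediately preceding the statement into a rigorous proof; since the change of variables and the definition of $\theta_N$ are both explicit, the whole proposition reduces to bookkeeping.

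First I would record the chain-rule identities for $\sigma(N/\tau)$, $\sigma'(N/\tau)$ and $\sigma''(N/\tau)$ in terms of $\theta_N(\tau)$ and its derivatives, which are exactly the three formulas already displayed in the text. Each of these is a polynomial in $(\theta_N, \theta_N', \theta_N'')$ whose coefficients have the form $\tau^a N^{-b}$ with integers $a, b \geq 0$. Substituting into the master ODE (\ref{ODE for sigma 2}), and noting that $1+t^2 = 1 + N^2/\tau^2$ has the same structure, turns each of its six summands into a polynomial in $(\theta_N, \theta_N', \theta_N'')$ whose coefficients are Laurent monomials in $\tau$ times nonnegative powers of $N^{-1}$. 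Since $\sigma$ solves (\ref{ODE for sigma 2}) by Theorem \ref{Rubin2}, this immediately proves the first assertion, namely that $\theta_N$ solves the transformed equation.

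Next I would clear denominators uniformly. Multiplying by a single large enough positive power $\tau^q$ removes every negative power of $\tau$ simultaneously, and dividing by an appropriate positive power of $N$ (as done with $N^2$ in the preceding computation) yields precisely the stated identity $\sum_{k=0}^m N^{-k} P_k(\tau, \theta_N, \theta_N', \theta_N'') / \tau^q = 0$ for universal integers $m, q$ and universal polynomials $P_k$, with $m$ finite because the total $N$-degree produced by the substitution in each summand is bounded. To identify $P_0$, I would extract from each of the six terms of (\ref{ODE for sigma 2}) the pieces that survive the substitution without acquiring any factor of $N^{-1}$; a direct comparison with the second displayed equation just above the proposition shows that these pieces reassemble into exactly the two sides of equation (\ref{Painleve V for theta}), so that $P_0/\tau^q$ is precisely the $\sigma$-form of Painlevé-V.

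The only real obstacle is organizational: one must confirm that in the informal derivation above no term of order $N^0$ has been inadvertently absorbed into the $O(N^{-1})$ remainder, and conversely that no hidden cancellation forces $P_0$ to vanish. This amounts to tracking one extra power of $N$ in each of the six summands of the master ODE and then reading off the coefficients, and it requires no new idea beyond the computation already sketched.
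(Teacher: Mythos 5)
Your proposal is correct and is essentially the paper's own proof: the proposition is established there by exactly the substitution and bookkeeping displayed immediately before its statement (the chain-rule formulas for $\sigma(N/\tau)$, $\sigma'(N/\tau)$, $\sigma''(N/\tau)$, division by $N^2$, and collection of powers of $N^{-1}$), which you propose to carry out explicitly. The only caveat, inherited from the paper rather than introduced by you, is that identifying $P_0\tau^{-q}$ with (\ref{Painleve V for theta}) requires the first term of (\ref{ODE for sigma 2}) to be read as $(1+t^2)^2(\sigma''(t))^2$, as in Forrester--Witte and as the paper's displayed computation implicitly does; with the printed exponent that summand contributes only at order $N^{-2}$ after the rescaling, and the $\tau^2(\theta_N''(\tau))^2$ term would be missing from $P_0$.
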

\begin{remark}
We note that $\theta_N(\tau)$, given by (\ref{theta (change of variable for the ODE)}), is a solution of the ODE (\ref{ODE for sigma 2}), with $t=N/\tau$. Moreover, we know that $\lim_{N\rightarrow\infty}NK_N(x,y)=K_\infty(x,y)$, for any $x,y\in\mathds{R}^\ast$. Hence it is natural to guess that $\theta_\infty(\tau)$ should satisfy the ODE (\ref{Painleve V for theta}).
\end{remark}

\subsection{Some technical Lemmas}

For clarity, we decompose the proof of our Theorems into several Lemmas about the convergence of correlation functions and the derivatives of the kernel $K_{[N]}$.

\begin{lemma}\label{key estimates for convergence theorem}
Let $K$ be a function in $C^2 ((\mathds{R}_+^*)^2, \mathds{R})$, such that for all $k \in \mathds{N}$, and $x_1,x_2,\ldots,x_k > 0$, the matrix $K(x_i,x_j)_{1 \leq i,j \leq k}$ is symmetric and positive. Define the $k$-correlation function $\rho_k$ by:
\begin{equation*}
\rho_k(x_1,\ldots,x_k) = \det (K(x_i,x_j)_{1 \leq i,j \leq k}),
\end{equation*}
and suppose that for $(p,q) \in \{(i,j);\;i,j\in\mathds{N}_0,\; i+j\leq2\}$, for some $\alpha>1/2$, and for all $x_0>0$, one has
the upper bound
\begin{equation}\label{bound on K for p+q}
\left|\frac{\partial^{p+q}}{\partial x^p \partial y^q} K(x,y) \right| \leq \frac{C(x_0)}{(xy)^{\alpha}},
\end{equation}
if $x,y \geq x_0$. Then, $\rho_k$ is in $C^2 ((\mathds{R}_+^*)^k, \mathds{R})$ for all $k$, and for all $x_0 > 0$, $x_1,\ldots,x_k \geq x_0$,
one has:
\begin{equation}
\left|\frac{\partial^{p}}{\partial x_j^p} \rho_k (x_1,\ldots,x_k)\right| \leq \frac{(C(x_0))^k}{(x_1\ldots x_k)^{2 \alpha}}, \label{main bound for K}
\end{equation}
if $p \in \{0,1,2\}$ and $j \in \{1,\ldots,k\}$. Moreover,
\begin{equation}
\frac{\partial^{p}}{\partial x_j^p} \rho_k (x_1,\ldots,x_k) = 0 \label{partial rho equal 0}
\end{equation}
if $p \in \{0,1\}$, $j \in \{1,\ldots,k\}$ and if there exists $j' \neq j$ such that $x_j = x_{j'}$.
\end{lemma}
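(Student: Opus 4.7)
The strategy is to treat $\rho_k = \det M$, where $M := (K(x_i,x_l))_{1\leq i,l\leq k}$, as a polynomial in the entries $M_{il}$ and to exploit the positive semidefiniteness of $M$ at every stage. Since each $K(x_i,x_l)$ is $C^2$, so is $\rho_k$. Setting $P := \prod_{i=1}^k x_i^{2\alpha}$, the case $p=0$ of \eqref{main bound for K} follows immediately from Hadamard's inequality for positive semidefinite matrices:
\[
\rho_k = \det M \leq \prod_{i=1}^k K(x_i,x_i) \leq \frac{(C(x_0))^k}{P}.
\]

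For $p=1$, I would apply Jacobi's formula $\partial_{x_j}\det M = \mathrm{tr}(\mathrm{adj}(M)\,\partial_{x_j}M)$. Since $x_j$ appears only in row $j$ and column $j$ of $M$, and using $\partial_1 K(x,y)=\partial_2 K(y,x)$, a short calculation gives
\[
\partial_{x_j}\rho_k \;=\; 2\det M^{(1)} \;=\; 2\sum_{a=1}^k \partial_1 K(x_j,x_a)\,A_{ja},
\]
where $M^{(1)}$ is $M$ with row $j$ replaced by $(\partial_1 K(x_j,x_l))_l$, and $A_{ja}$ denotes the $(j,a)$-cofactor of $M$. Because $M$ is positive semidefinite, so is its adjugate $\mathrm{adj}(M)$, which yields the Cauchy--Schwarz bound $|A_{ja}|^2 \leq A_{jj}A_{aa}$. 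Each $A_{ii}$ is a principal $(k{-}1)$-minor of $M$ and is itself bounded by Hadamard, $A_{ii}\leq(C(x_0))^{k-1}x_i^{2\alpha}/P$, whence $|A_{ja}|\leq(C(x_0))^{k-1}(x_jx_a)^\alpha/P$. Combined with the hypothesis $|\partial_1 K(x_j,x_a)|\leq C(x_0)/(x_jx_a)^\alpha$, every summand contributes at most $(C(x_0))^k/P$, and the sum over the $k$ indices $a$ absorbs the factor $k$ into a slightly enlarged $C(x_0)$.

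For $p=2$, a further application of Jacobi's formula decomposes $\partial_{x_j}^2\rho_k$ into (i) twice the determinant of a symmetric positive semidefinite matrix $M^{**}$ obtained from $M$ by replacing both row $j$ and column $j$ by first derivatives of $K$ (with $(j,j)$-entry $\partial_1\partial_2 K(x_j,x_j)$), which is controlled by Hadamard exactly as in case $p=0$; and (ii) twice the determinant of the non-symmetric matrix $G$ obtained from $M$ by replacing column $j$ by $(\partial_2^2 K(x_i,x_j))_i$, which by cofactor expansion along column $j$ reads $\det G = \sum_i \partial_2^2 K(x_i,x_j)\,A_{ij}$ and is bounded by the same cofactor Cauchy--Schwarz argument as in case $p=1$. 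All derivatives involved are of total order at most two in $K$, hence are covered by the hypothesis on $p+q\leq 2$.

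Finally, the vanishing statement \eqref{partial rho equal 0} is short: for $p=0$, if $x_j = x_{j'}$ with $j'\neq j$ then rows $j$ and $j'$ of $M$ coincide and $\rho_k=0$; for $p=1$ the identity $\partial_{x_j}\rho_k=2\det M^{(1)}$ settles it, because columns $j$ and $j'$ of $M^{(1)}$ become identical under $x_j = x_{j'}$ (their entries agree in every row, reading $K(x_i,x_j)$ for $i\neq j$ and $\partial_1 K(x_j,x_j)$ in row $j$), so that $\det M^{(1)}=0$. The main technical obstacle in the whole argument is tracking the combinatorial constants in the $p=2$ step so that no factorial or exponential-in-$k$ blow-up appears; this is ensured by the precise pairing of each cofactor estimate (contributing $(x_jx_a)^\alpha$ in the numerator) with the derivative bound on $K$ (contributing $(x_jx_a)^{-\alpha}$), leaving only a linear factor of $k$ that can be absorbed by enlarging $C(x_0)$.
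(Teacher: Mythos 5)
Your proposal is correct, but it takes a genuinely different route from the paper. The paper never differentiates inside a determinant with derivative entries: it expands $\rho_k$ by isolating row and column $1$, so that $\rho_k$ and its first two $x_1$-derivatives are written as sums of scalar factors (values and derivatives of $K$ at points involving $x_1$) times minors built purely from $K$, and those minors are bounded by the Gram-determinant inequality $|\det(K(y_i,z_j))|\leq \bigl(\prod_i K(y_i,y_i)\prod_j K(z_j,z_j)\bigr)^{1/2}$; the vanishing \eqref{partial rho equal 0} is obtained by a soft argument ($\rho_k\geq 0$ and $\rho_k=0$ at $x_j=x_{j'}$, so this is a minimum and the first derivative vanishes). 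You instead use Jacobi's formula, bound cofactors via positive semidefiniteness of the adjugate ($|A_{ja}|\leq\sqrt{A_{jj}A_{aa}}$, which is equivalent to the paper's Gram bound applied to the corresponding non-principal minor) and Hadamard's inequality, and you prove the vanishing by exhibiting two equal columns in $M^{(1)}$; both are fine, and your identity $\partial_{x_j}\rho_k=2\det M^{(1)}$ (using the symmetry of $K$) and the decomposition $\partial^2_{x_j}\rho_k=2\det M^{**}+2\det G$ check out, as does the absorption of the linear-in-$k$ factors into $(C(x_0))^k$ since $k\leq 2^k$. The one place where you assert something nontrivial without proof is the positive semidefiniteness of the derivative-bordered matrix $M^{**}$ (entries $K(x_i,x_l)$ off the $j$-th row/column, $\partial_2 K(x_i,x_j)$ and $\partial_1 K(x_j,x_l)$ on them, and $\partial_1\partial_2K(x_j,x_j)$ at $(j,j)$). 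This is true and can be justified by a difference-quotient argument: with $B_h$ the $(k+1)\times(k+1)$ kernel matrix at the points $x_1,\ldots,x_k,x_j+h$ and $T_h$ the $k\times(k+1)$ matrix whose $j$-th row is $(e_{k+1}-e_j)/h$ and whose other rows are the $e_i$, the matrices $T_hB_hT_h^{\ast}$ are positive semidefinite and converge entrywise to $M^{**}$ as $h\to 0$ because $K\in C^2$; you should include this step, since the paper's arrangement of the computation is designed precisely so that no such fact about differentiated kernel matrices is ever needed.
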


\begin{proof}
Fix $k\in\mathds{N}$. The fact that $\rho_k$ is in $C^2$ is an immediate consequence of the fact that $K$ is in $C^2$.
For $x_1,...,x_{j-1},x_{j+1},...,x_k$ fixed, the function:
\begin{equation*}
t \mapsto \rho_k(x_1,...,x_{j-1},t,x_{j+1},...,x_k)
\end{equation*}
is positive by the positivity of $K$, and equal to zero if $t=x_{j'}$ for some $j' \in\{1,\ldots,j-1,j+1,\ldots,k\}$. Therefore,
$t=x_{j'}$ is a local minimum of this function and one deduces the equality \eqref{partial rho equal 0}. We now turn to the proof of \eqref{main bound for K}. By symmetry of $\rho_k$, we only need to show the case $j=1$. We isolate the terms containing $x_1$ in the determinant defining $\rho_k$ to obtain:
\begin{align*}
& \rho_k (x_1,\ldots,x_k) = K(x_1,x_1) \det( K(x_{l+1},x_{m+1})_{1 \leq l,m \leq k-1})\\
&+ \sum_{2 \leq i,j \leq k} (-1)^{i+j-1} K(x_i,x_1) K(x_1,x_j)
\det( K(x_{l+1+ \mathds{1}_{l \geq i-1}},x_{m+1 + \mathds{1}_{m \geq j-1} })_{1 \leq l,m \leq k-2}),
\end{align*}
where we take the convention that an empty sum is equal to $0$ and an empty determinant is equal to $1$.
One deduces:
\begin{align*}
& \frac{\partial}{\partial x_1}\rho_k (x_1,\ldots,x_k) = (K'_1 + K'_2)(x_1,x_1)
\det( K(x_{l+1},x_{m+1})_{1 \leq l,m \leq k-1}) \\
& + \sum_{2 \leq i,j \leq k} (-1)^{i+j-1} (K'_2(x_i,x_1) K(x_1,x_j) + K(x_i,x_1) K'_1(x_1,x_j)) \\
& \det( K(x_{l+1+ \mathds{1}_{l \geq i-1}},x_{m+1 + \mathds{1}_{m \geq j-1} })_{1 \leq l,m \leq k-2}),
\end{align*}
and
\begin{align*}
& \frac{\partial^2}{\partial x_1^2}\rho_k (x_1,...,x_k) = (K''_{1,1} + 2 K''_{1,2} + K''_{2,2})(x_1,x_1)
\det( K(x_{l+1},x_{m+1})_{1 \leq l,m \leq k-1})  \\
&+ \sum_{2 \leq i,j \leq k} (-1)^{i+j-1} (K''_2(x_i,x_1) K(x_1,x_j) + 2 K'_2(x_i,x_1) K'_1(x_1,x_j)\\
&+ K(x_i,x_1) K''_1(x_1,x_j)) \det( K(x_{l+1+ \mathds{1}_{l \geq i-1}},x_{m+1 + \mathds{1}_{m \geq j-1} })_{1 \leq l,m \leq k-2}),
\end{align*}
where for $p,q \in \{1,2\}$, $K'_p$ denotes the derivative of $K$ with respect to the $p$-th variable,
and $K''_{p,q}$ denotes the second derivative of $K$ with respect to the $p$-th and the $q$-th variable.
By the positivity of $K$, there exists, for all $r \in \mathds{N}$ and $y_1,...,y_r, z_1,...,z_r>0$, vectors
$e_1,...,e_r,f_1,...,f_r$ of an Euclidian space $E$ equipped with its usual scalar product $(.|.)$, such that $(e_i|f_j) = K(y_i,z_j)$ for all $i,j\in\{1,\ldots,r\}$.
Now, we can define a scalar product on the $r$-th exterior power  of $E$ by setting
\begin{equation*}
(u_1 \wedge ... \wedge u_r| v_1 \wedge ... \wedge v_r)  = \det ( (u_i|v_j)_{1 \leq i,j \leq r}),
\end{equation*}
for all $u_1,...,u_r,v_1,...,v_r \in E$.
Note that this scalar product is nothing else than a Gram determinant and we have the upper bound
\begin{equation*}
|\det ( (e_i|f_j)_{1 \leq i,j \leq r}) | \leq \prod_{i=1}^r \|e_i\|_E \prod_{i=1}^r \|f_i\|_E,
\end{equation*}
$\|.\|_E$ being the norm associated to $(.|.)$. This last bound is equivalent to
\begin{equation}
|\det ( K(y_i,z_j)_{1 \leq i,j \leq r}) | \leq  \sqrt{ \prod_{i=1}^r K(y_i,y_i) \prod_{i=1}^r
K(z_i,z_i)}.  \label{bound on the determinant (in proof of key lemma)}
\end{equation}
Now, let $x_0> 0$ and $x_1,\ldots,x_k \geq x_0$. The bound \eqref{bound on K for p+q} given in the statement of the Lemma and the inequality \eqref{bound on the determinant (in proof of key lemma)} imply
\begin{equation*}
|\det( K(x_{l+1},x_{m+1})_{1 \leq l,m \leq k-1})| \leq \frac{(C(x_0))^{k-1}}{(x_2 x_3\cdots x_k)^{2 \alpha}}
\end{equation*}
and
\begin{align*}
& |\det( K(x_{l+1+ \mathds{1}_{l \geq i-1}},x_{m+1 + \mathds{1}_{m \geq j-1} })_{1 \leq l,m \leq k-2})|\\
\leq& \frac{ (C(x_0))^{k-2} }{ (x_2 x_3\cdots x_{i-1}x_{i+1}\cdots x_k)^{\alpha}
(x_2 x_3\cdots x_{j-1}x_{j+1}\cdots x_k)^{\alpha} } \\
=& \frac{( C(x_0))^{k-2} (x_i x_j)^{\alpha} }{ (x_2...x_k)^{2 \alpha}}.
\end{align*}
Hence, each term involved in the expressions of $\rho_k$ and its two first derivatives with respect to $x_1$
is smaller than $4( C(x_0))^k / (x_1\cdots x_k)^{2 \alpha}$ and therefore, the absolute values of $\rho_k$ an its derivatives are bounded by $4 ((k-1)^2 + 1) ( C(x_0))^k /(x_1\cdots x_k)^{2 \alpha} \leq 4^k ( C(x_0))^k /(x_1\cdots x_k)^{2 \alpha} $, implying the bound \eqref{main bound for K}.
\end{proof}

\begin{remark} \label{C(x_0)}
In the above proof, the value of $C(x_0)$ does not change. It is thus possible to take $C(x_0)$ in the inequality \eqref{main bound for K} to be equal to $4$ times the value of $C(x_0)$ in \eqref{bound on K for p+q}.
\end{remark}

We now have to prove that the re-scaled kernel $K_{[N]}$  satisfies the hypothesis of Lemma \ref{key estimates for convergence theorem}, and that its partial derivatives converge pointwise to the partial derivatives of $K_\infty$. In the following, we introduce the notation
\begin{equation*}
F_{n,h,a}(x)={}_2F_1\left[-n,h,a;2/(1+ix)\right],
\end{equation*}
for $(n,h,a)\in\mathds{N}\times\mathds{C}\times\mathds{R}_+^\ast$.

\begin{lemma} \label{derivatives F_{n,h,a}}
Let $\epsilon \in \{0,1\}$, $h \in \mathds{C}$, $a \in \mathds{R}_+^*$. For $N \in \mathds{N}$, we set
$n := N - \epsilon$. Then, $x \mapsto F_{n,h,a} (Nx)$ and $x \mapsto \,_1F_1 [h,a;2i/x]$
are in $C^{\infty} (\mathds{R}^*)$, and for all $p \in \mathds{N}$ and $x \in \mathds{R}^*$:
\begin{equation*}
\frac{d^p}{dx^p} (F_{n,h,a} (Nx)) \underset{N \rightarrow \infty}{\longrightarrow}  \frac{d^p}{dx^p} (\,_1F_1 [h,a;2i/x]).
\end{equation*}
Moreover, for all $x_0 > 0$ and for all $x\in\mathds{R}$ such that $|x|\geq x_0$, one has the bound
\begin{equation*}
\left| \frac{d^p}{dx^p} (F_{n,h,a} (Nx)) \right| \leq \frac{C(x_0, h,a,p)}{|x|^{p + \mathds{1}_{p > 0}}}.
\end{equation*}
\end{lemma}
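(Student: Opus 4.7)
The starting observation is that since $-n$ is a non-positive integer, $(-n)_k=0$ for $k>n$, so $F_{n,h,a}(Nx)$ is in fact a polynomial of degree $n$ in $\frac{2}{1+iNx}$:
\[
F_{n,h,a}(Nx)=\sum_{k=0}^{n}\frac{(-n)_k(h)_k}{(a)_k\,k!}\Bigl(\frac{2}{1+iNx}\Bigr)^{k}.
\]
Since $x\mapsto\frac{2}{1+iNx}$ is $C^{\infty}$ on $\mathds{R}$, so is the left-hand side; and since $_1F_1[h,a;\,\cdot\,]$ is entire while $x\mapsto 2i/x$ is $C^{\infty}(\mathds{R}^{\ast})$, their composition is $C^{\infty}(\mathds{R}^{\ast})$. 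This settles the smoothness claim.

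The pointwise convergence of each summand comes from the rearrangement
\[
\frac{(-n)_k}{k!}\Bigl(\frac{2}{1+iNx}\Bigr)^{k}=\frac{(-n)_k}{N^{k}\,k!}\Bigl(\frac{2N}{1+iNx}\Bigr)^{k},
\]
in which $(-n)_k/N^{k}\to(-1)^{k}$ (for $n=N-\epsilon$) and $\frac{2N}{1+iNx}\to -2i/x$, so the product tends to $(2i/x)^{k}/k!$. Weighted by $(h)_k/(a)_k$, this is exactly the $k$-th term of $_1F_1[h,a;2i/x]$. The analogue for the $p$-th derivative follows from the closed form
\[
\tfrac{d^{p}}{dx^{p}}(1+iNx)^{-k}=(-1)^{p}\,k(k+1)\cdots(k+p-1)\,(iN)^{p}\,(1+iNx)^{-k-p},
\]
after regrouping the $N$-factors and applying the same two limits. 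Observe that for $p\geq 1$ the $k=0$ term vanishes identically; this will be the mechanism producing the improvement from $|x|^{-p}$ to $|x|^{-p-1}$ in the bound.

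The dominating estimate rests on the elementary inequality $|1+iNx|\geq\max(1,N|x|)$. In the regime $N|x|\geq 1$, which is automatic for $N\geq 1/x_0$, the derivative formula above gives
\[
\Bigl|\tfrac{d^{p}}{dx^{p}}(1+iNx)^{-k}\Bigr|\leq k(k+1)\cdots(k+p-1)\,\frac{1}{N^{k}|x|^{k+p}}.
\]
Combined with $|(-n)_k|/k!\leq N^{k}/k!$, this bounds each term of $\tfrac{d^{p}}{dx^{p}}F_{n,h,a}(Nx)$ uniformly in $N$ by
\[
\frac{|(h)_k|\,k(k+1)\cdots(k+p-1)}{(a)_k\,k!}\,\frac{2^{k}}{|x|^{k+p}},
\]
and the resulting series (starting at $k=1$ as soon as $p\geq 1$) is majorized, for $|x|\geq x_0$, by a convergent series of the form $C(x_0,h,a,p)/|x|^{p+\mathds{1}_{p>0}}$: the combinatorial factor $k(k+1)\cdots(k+p-1)$ is absorbed by $1/k!$, and the remaining numerical series is a value of an entire hypergeometric-type function. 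The complementary range $N|x|<1$ concerns only finitely many $N<1/x_0$; on this finite set $F_{n,h,a}(Nx)$ is a polynomial of bounded degree with bounded coefficients in the $C^{\infty}$ quantity $\frac{2}{1+iNx}$, whose modulus is $\leq 2$, so a direct inspection yields the same form of bound. With the $N$-uniform summable majorant in hand, dominated convergence for series converts the termwise limits into the required convergence of $\tfrac{d^{p}}{dx^{p}}F_{n,h,a}(Nx)$ to $\tfrac{d^{p}}{dx^{p}}\,{}_1F_1[h,a;2i/x]$.

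The only real nuisance I expect is bookkeeping: checking that the polynomial factor $k(k+1)\cdots(k+p-1)$ does not spoil absolute summability, and that the implicit constant remains finite as $|x|\downarrow x_0$. No idea beyond $|1+iNx|\geq\max(1,N|x|)$ and the asymptotics $(-n)_k/N^{k}\to(-1)^{k}$ appears to be needed.
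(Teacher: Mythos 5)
Your proposal is correct and follows essentially the same route as the paper: expand $F_{n,h,a}(Nx)$ as the (finite) hypergeometric sum, differentiate term by term, dominate each term uniformly in $N$ via $|(-n)_k|\leq N^k$ and $|1+iNx|\geq N|x|$, and conclude by dominated convergence together with term-by-term differentiation of the $\,{}_1F_1[h,a;2i/x]$ series. The only cosmetic remark is that your case split is unnecessary: $|1+iNx|=\sqrt{1+N^2x^2}\geq N|x|$ holds for all $N$ and $x$, so the bound you derive in the regime $N|x|\geq 1$ is in fact valid universally and the separate treatment of $N|x|<1$ can be dropped.
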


\begin{proof}
One has
\begin{equation*}
F_{n,h,a} (Nx) = \sum_{k=0}^{\infty} \frac{(-n)_k (h)_k}{(a)_k k!} \, \left( \frac{2}{1 + Nix} \right)^k,
\end{equation*}
where only a finite number of the summands are different from zero. This implies that the function
is $C^{\infty}$ on $\mathds{R}^\ast$, and
\begin{equation*}
\frac{d^p}{dx^p} (F_{n,h,a} (Nx)) = \sum_{k=0}^{\infty} \frac{(-n)_k (h)_k}{(a)_k k!}
(k)_p \left( \frac{2}{1 + Nix} \right)^{k+p} \left( - \frac{iN}{2} \right)^p.
\end{equation*}
The term of order $k$ in this sum is dominated by (note that $a>0$)
\begin{equation*}
\frac{(|h|)_k}{(a)_k k!} \, (k)_p \frac{2^k}{|x|^{k+p}},
\end{equation*}
and for fixed $x$, tends to
\begin{equation*}
\frac{(h)_k}{(a)_k k!} (k)_p \frac{(2i)^k (-1)^p} {x^{k+p}},
\end{equation*}
when $N\rightarrow\infty$.
One deduces, that for $|x| \geq x_0 > 0$:
\begin{align*}
\left| \frac{d^p}{dx^p} (F_{n,h,a} (Nx)) \right| & \leq \sum_{k=0}^{\infty} \frac{(|h|)_k}{(a)_k k!}  (k)_p \frac{2^k}{|x|^{k+p}} \\
& \leq \mathds{1}_{p = 0} + \frac{1}{|x|^{p+1}} \sum_{k=1}^{\infty}
\frac{(|h|)_k}{(a)_k k!} (k)_p \frac{2^k}{x_0^{k-1}}\\
& \leq \frac{C(x_0,h,a,p)}{|x|^{p + \mathds{1}_{p > 0}}}
\end{align*}
which is the desired bound. Now, by dominated convergence, one has
\begin{equation*}
\frac{d^p}{dx^p} (F_{n,h,a} (Nx))  \underset{N \rightarrow \infty}{\longrightarrow} \sum_{k=0}^{\infty} \frac{(h)_k}{(a)_k k!} (k)_p \frac{(2i)^k (-1)^p} {x^{k+p}}.
\end{equation*}
Hence, Lemma \ref{derivatives F_{n,h,a}} is proved if we show that $x \mapsto _1F_1 [h,a;2i/x]$ is $C^{\infty}$ on $\mathds{R}^\ast$, and that
\begin{equation}
\frac{d^p}{dx^p} (\,_1F_1 [h,a;2i/x]) = \sum_{k=0}^{\infty}  \frac{(h)_k}{(a)_k k!} (k)_p \frac{(2i)^k (-1)^p} {x^{k+p}}. \label{sum1F1}
\end{equation}
But the sum in \eqref{sum1F1} is obtained by taking the derivative of order $p$ of each term of the sum defining $_1F_1$.
Therefore, we are done, since this term by term derivation is justified by the domination of the right hand side of \eqref{sum1F1} by $C(x_0,h,a,p)/|x|^{p+\mathds{1}_{p>0}}$ on $\mathds{R} \backslash (-x_0,x_0)$.
\end{proof}

\begin{lemma} \label{derivatives P_N Q_N}
Fix $s$ such that $\Re{s}>-\frac{1}{2}$. Define the functions $\tilde{P}_N$ and $Q_N$ by
\begin{align*}
\tilde{P}_N (x) &= 2^{\Re{s}} \left( \frac{\Gamma (2 \Re{s} + N + 1)}{N \Gamma(N)} \right)^{1/2} \tilde{p}_N
(Nx) \sqrt{w_H(Nx)},\\
Q_N(x) &= 2^{\Re{s}+1} \left( \frac{N \Gamma (2 \Re{s} + N + 1)}{\Gamma(N)} \right)^{1/2} p_{N-1}
(Nx) \sqrt{w_H(Nx)},
\end{align*}
where $\tilde{p}_N$, $p_{N-1}$ and $w_H$ are given in Theorem \ref{correlation Kernel} and the remark below that Theorem. Then, $\tilde{P}_N$ and $Q_N$ are $C^{\infty}$ on $\mathds{R}$, $\tilde{P}$ and $Q$, defined below \eqref{definition of K infinity}, are $C^{\infty}$ on
$\mathds{R}^*$, and for all $x \in \mathds{R}^*$, $p \in \mathds{N}_0$,
\begin{align*}
&(\textrm{Sgn} (x))^N \tilde{P}_N^{(p)} (x) \underset{N \rightarrow \infty}
{\longrightarrow} \tilde{P}^{(p)} (x),\\
&(\textrm{Sgn} (x))^N Q_N^{(p)} (x) \underset{N \rightarrow \infty}
{\longrightarrow} Q^{(p)} (x).
\end{align*}
Moreover, for all $p \in \mathds{N}_0$, $x_0>0$, one has the following bounds:
\begin{equation*}
\left|\tilde{P}_N^{(p)} (x)\right| \leq \frac{C(x_0,s,p)}{|x|^{p + \Re{s}}},
\end{equation*}
and
\begin{equation*}
\left|Q_N^{(p)} (x)\right| \leq \frac{C(x_0,s,p)}{|x|^{p + 1 + \Re{s}}},
\end{equation*}
for all $|x| \geq x_0$.
\end{lemma}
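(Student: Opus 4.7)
The plan is to decompose both functions as explicit products and reduce everything to the hypergeometric analysis of Lemma~\ref{derivatives F_{n,h,a}} combined with estimates for the single auxiliary function $B_N(x) := (Nx-i)^N\sqrt{w_H(Nx)}$. Using the explicit forms of $\tilde p_N$ and $p_{N-1}$ from \eqref{p_N tilde} and \eqref{BO polynomials (monic)}, one checks directly that
$$\tilde P_N(x) = A_N\,B_N(x)\,F_{N,s,2\Re s+1}(Nx),\qquad Q_N(x) = 2NA_N\,\frac{B_N(x)}{Nx-i}\,F_{N-1,s+1,2\Re s+2}(Nx),$$
with $A_N := 2^{\Re s}\bigl(\Gamma(2\Re s+N+1)/(N\Gamma(N))\bigr)^{1/2}$ (and using that $A_N'=2NA_N$). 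Each factor on the right-hand side is $C^\infty$ on $\mathds{R}$ (the polynomial $(Nx-i)^N$; the function $\sqrt{w_H(Nx)}$, since $1+iNx$ stays off the principal branch cut for real $x$; the hypergeometric factor by Lemma~\ref{derivatives F_{n,h,a}}; and $1/(Nx-i)$, since $Nx-i\neq 0$), so $\tilde P_N,Q_N\in C^\infty(\mathds{R})$; analogously $\tilde P,Q\in C^\infty(\mathds{R}^*)$.

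The key computation is the algebraic identity
$$\frac{B_N'(x)}{B_N(x)} = \frac{N^2(-\Re s\cdot x + i) + N\,\Im s}{1+N^2 x^2},$$
obtained by adding $(d/dx)(Nx-i)^N/(Nx-i)^N = N^2/(Nx-i)$ to $(\sqrt{w_H(Nx)})'/\sqrt{w_H(Nx)} = N[-(\Re s+N)Nx + \Im s]/(1+N^2x^2)$ and noting the cancellation of the a priori leading $N^3 x$ terms. This logarithmic derivative is $O(|x|^{-1})$ uniformly in $N\geq 1$ on $|x|\geq x_0$, and converges to $-\Re s/x + i/x^2$, which is precisely $(\log(|x|^{-\Re s}e^{-i/x}))'$. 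Combined with Stirling (giving $A_N\sim 2^{\Re s}N^{\Re s}$) and the elementary modulus bound $|B_N(x)|=(1+N^2x^2)^{-\Re s/2}e^{\Im s\,\arg(1+iNx)}$, this yields $|A_N B_N(x)|\leq C(s,x_0)|x|^{-\Re s}$. Iterating (each higher derivative of $\log B_N$ produces further algebraic cancellations keeping it $O(|x|^{-k})$ uniformly in $N$) gives by induction $|A_N B_N^{(p)}(x)|\leq C(s,x_0,p)|x|^{-\Re s - p}$.

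Leibniz' rule then combines these estimates with Lemma~\ref{derivatives F_{n,h,a}} to deliver the bounds $|\tilde P_N^{(p)}(x)|\leq C/|x|^{p+\Re s}$ and $|Q_N^{(p)}(x)|\leq C/|x|^{p+1+\Re s}$ on $|x|\geq x_0$. For pointwise convergence of derivatives, the sign factor $(\textrm{Sgn}(x))^N$ is precisely what neutralises the unimodular oscillation coming from $(Nx-i)^N/|Nx-i|^N = (-i)^N e^{iN\arctan(Nx)}$: since $N\arctan(Nx) = N\textrm{Sgn}(x)\pi/2 - 1/x + O(N^{-2})$, one gets $(\textrm{Sgn}(x))^N\,(Nx-i)^N/|Nx-i|^N\to e^{-i/x}$, and applying dominated convergence to the Leibniz expansion (with the uniform bounds just established as dominator) yields $(\textrm{Sgn}(x))^N\tilde P_N^{(p)}(x)\to \tilde P^{(p)}(x)$, and analogously for $Q_N$. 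The main obstacle is precisely the $N^3$-cancellation in $B_N'/B_N$ and its propagation to higher derivatives: a naive termwise bound on $B_N^{(p)}$ would grow like $N^p$, which would render the statement of the lemma false, so the inductive bookkeeping that preserves the cancellations at each order $p$ is the real technical content.
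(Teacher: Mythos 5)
Your proposal is correct in substance, but it takes a genuinely different route through the prefactor than the paper does. The paper never works with $B_N(x)=(Nx-i)^N\sqrt{w_H(Nx)}$ directly: using the identity $\log(1+\delta iNx)=\log(1-\delta i/(Nx))+\log(N|x|)+i\pi\delta\,\textrm{Sgn}(x)/2$ it rewrites $\tilde{P}_N$ and $Q_N$ \emph{exactly} as $D'(N,s)\,(\textrm{Sgn}(x))^N e^{\pi\Im{s}\,\textrm{Sgn}(x)/2}(2/x)^{N-n}(2/|x|)^{\Re{s}}$ times $G(1/x)$ times the hypergeometric factor, where $G(y)=(1-iy/N)^{(N-s)/2-(N-n)}(1+iy/N)^{-(\overline{s}+N)/2}$ and $D'(N,s)\to 1$; the derivatives of $G$ are computed in closed form, converge to those of $e^{-iy}$, and are bounded by $C(s,p)(1+y^2)^{1/4}$ uniformly in $N$, so after composing with $y=1/x$ and applying Leibniz, the sign factor, the power $|x|^{-\Re{s}}$, the constant $e^{\pi\Im{s}\,\textrm{Sgn}(x)/2}$ and the limit $e^{-i/x}$ are all manifest and no induction over derivative orders is needed — the uniformity in $N$ is automatic because $N$ only enters through $y/N$ against exponents of size $N$. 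You instead keep $B_N$ intact and control it by its logarithmic derivative, where the cancellation of the $N^3x$ terms gives $B_N'/B_N=\bigl(N^2(i-\Re{s}\,x)+N\Im{s}\bigr)/(1+N^2x^2)$, and you recover the oscillation from $N\arctan(Nx)=N\,\textrm{Sgn}(x)\pi/2-1/x+O(N^{-2})$; your identity, the Stirling estimate for $A_N$, the modulus of $B_N$, and the sign bookkeeping all check out, and via Fa\`a di Bruno plus Leibniz they do yield both the stated bounds and the convergence of derivatives. What the paper's rewriting buys is precisely that the delicate part of your plan — propagating the cancellation and the convergence to all orders — evaporates; what your route buys is avoiding the exponent-splitting/branch manipulation and exhibiting the first-order cancellation transparently. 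Two spots in your write-up should be tightened, though neither is a real gap: (i) once $(\log B_N)'$ is the displayed rational function, the uniform bounds and the convergence of $(\log B_N)^{(k)}$ for $k\geq 2$ follow by direct differentiation (no further ``cancellations'' occur), and it is the representation $B_N^{(p)}=B_N\cdot(\text{Bell polynomial in the log-derivatives})$, together with $A_N N^{-\Re{s}}\to 2^{\Re{s}}$, $e^{\Im{s}\arg(1+iNx)}\to e^{\pi\Im{s}\,\textrm{Sgn}(x)/2}$ and the derivative bounds $|(2N/(Nx-i))^{(j)}|\leq C_j|x|^{-j-1}$ for the $Q_N$ case, that delivers the convergence of $(\textrm{Sgn}(x))^N(A_NB_N)^{(j)}$; (ii) ``dominated convergence'' is not the right closing step — the Leibniz expansion is a finite sum, so termwise convergence of the factors (dominated convergence is only needed inside Lemma \ref{derivatives F_{n,h,a}}, where it is already done) is what finishes the proof.
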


\begin{proof}
We define
\begin{equation*}
\Phi_N(x) = D(N,n,s) (Nx-i)^n F_{n,h,a} (Nx) (1 + iNx)^{(-s-N)/2} (1-iNx)^{(-\overline{s}-N)/2},
\end{equation*}
where
\begin{equation*}D(N,n,s) = 2^{\Re{s} + (N-n)} \left( \frac{\Gamma(2 \Re{s} + N + 1)}{N \Gamma(N)} \right)^{1/2} N^{N-n},
\end{equation*}
and $N-n\in\{0,1\}$ (see Lemma \ref{derivatives F_{n,h,a}}). Then, if $(n,h,a)=(N,s,2 \Re{s} + 1)$, $\Phi_N(x) = \tilde{P}_N (x)$ and if $(n,h,a)=(N-1,s+1, 2 \Re{s} + 2)$, $\Phi_N(x) = Q_N (x)$. Moreover, note that $\Phi_N$ is a product of $C^\infty$ functions on $\mathds{R}$.

Now, for $\delta \in \{-1,1\}$:
\begin{equation*}
\log (1+\delta i N x) = \log ( 1 - \delta i/Nx) + \log (N|x|) + i \frac{\pi}{2} \delta \textrm{Sgn}(x),
\end{equation*}
because both sides of the equality have an imaginary part in $(-\pi, \pi)$  and their exponentials are equal. Hence,
\begin{align*}
& \left( \frac{-s+N}{2} - (N-n) \right) \log (1 + iNx) + \frac{- \overline{s}-N}{2} \log (1-iNx) \\
=& \left( \frac{-s+N}{2} - (N-n) \right) \log (1 - i/Nx) + \frac{- \overline{s}-N}{2} \log (1+i/Nx)\\
&  - (\Re{s} + (N-n)) \log (N|x|)  + n i \pi \textrm{Sgn} (x)/2 + \pi \Im{s} \textrm{Sgn} (x)/2.
\end{align*}
This implies:
\begin{align}
\Phi_N(x) =& D(N,n,s) (-i)^n (1 + iNx)^{(-s+N)/2 - (N-n)} (1-iNx) ^{(-\overline{s} - N)/2} F_{n,h,a}(Nx) \nonumber\\
=& D(N,n,s) (-i)^n (N|x|)^{-\Re{s} - (N-n)} e^{ni \pi \textrm{Sgn} (x)/2} e^{\pi \Im{s} \textrm{Sgn}(x)/2}\nonumber\\
&(1 - i/Nx)^{(N-s)/2 - (N-n)} (1+i/Nx)^{(-\overline{s} - N)/2} F_{n,h,a}(Nx)\nonumber \\
=& D(N,n,s) (\textrm{Sgn} (x))^n (2N)^{- \Re{s} - (N-n)} (2/|x|)^{\Re{s} + N-n}
e^{\pi \Im{s} \textrm{Sgn}(x)/2}\nonumber\\
&(1 - i/Nx)^{(N-s)/2 - (N-n)} (1+i/Nx)^{(-\overline{s} - N)/2} F_{n,h,a}(Nx)\nonumber \\
=& D'(N,s) (\textrm{Sgn} (x))^N e^{\pi \Im{s} \textrm{Sgn}(x)/2} (2/x)^{N-n} (2/|x|)^{\Re{s}}\nonumber\\
& (1 - i/Nx)^{(N-s)/2 - (N-n)} (1+i/Nx)^{(-\overline{s} - N)/2} F_{n,h,a}(Nx),\label{Phi written out}
\end{align}
where for $s$ fixed,
\begin{equation}\label{D'(N,s)}
D'(N,s) = D(N,n,s) (2N)^{- \Re{s} - (N-n)} = \left( \frac{\Gamma(2 \Re{s} + N +1)}{N^{2 \Re{s} + 1}
\Gamma(N)} \right)^{1/2}.
\end{equation}
This tends to $1$ when $N$ goes to infinity. In particular $D'(N,s)$ can be bounded by some $C(s)$, not depending on $N$. We investigate all the terms in \eqref{Phi written out}  separately in the following.

Let $G$ be the function defined by:
\begin{equation*}
G(y) := (1 - iy/N)^{(N-s)/2 - (N-n)} (1+iy/N)^{(-\overline{s} - N)/2}.
\end{equation*}
This function is $C^{\infty}$ on $\mathds{R}$ and one has:
\begin{align*}
G^{(p)} (y)  =& G(y)  \sum_{q= 0}^p C(p,q) (i/N)^q (-i/N)^{p-q} (-(N-s)/2 + N-n)_q\\
& ((N+\overline{s})/2)_{p-q} (1-iy/N)^{-q} (1+iy/N)^{-(p-q)}.
\end{align*}
For $s$, $y$, $p$ and $N-n \in \{0,1\}$ fixed, the last sum is dominated by some constant $C(s,p)$ only depending on $s$ and $p$ and tends to $(-i)^p$, as $N\rightarrow\infty$. Moreover, $G(y)$ tends to $e^{-iy}$, and
\begin{equation*}
G(y) = \left( \frac{1-iy/N}{1+iy/N} \right)^{(N- i \Im{s})/2} (1-iy/N)^{-(N-n)} (1+y^2/N^2)^{-\Re{s}/2}.
\end{equation*}
A simple computation, yields the following:
\begin{equation*}
|G(y)| \leq C(s) \left(1+\frac{y^2}{N^2}\right)^{- \Re{s}/2} \leq C(s) (1+y^2)^{1/4}.
\end{equation*}
This implies that $G^{(p)} (y)$ tends to $(-i)^p e^{-iy}$ when $N$ goes to infinity,
and that
\begin{equation*}
\left|G^{(p)} (y)\right| \leq C(s,p) (1+y^2)^{1/4}.
\end{equation*}
Now, for all $f$ in $C^{\infty} (\mathds{R})$, the function $g$ defined by $x\mapsto f(1/x)$ is in $C^\infty(\mathds{R}^\ast)$, and there exist universal integers $(\mu_{p,k})_{p \in \mathds{N}_0, 0 \leq k \leq p}$, such that $\mu_{p,0} = 0$ for all $p \geq 1$, and for $p\in\mathds{N}_0$,
\begin{equation*}
g^{(p)} (x) = \sum_{k=0}^p \frac{\mu_{p,k}}{x^{p+k}} f^{(k)} (1/x).
\end{equation*}
Applying this formula to the functions $G$ and $y \rightarrow e^{-iy}$, one obtains the following pointwise convergence (for $x \neq 0$):
\begin{equation}\label{convergence of G}
\frac{d^p}{dx^p} \left[(1 - i/Nx)^{(N-s)/2 - (N-n)} (1+i/Nx)^{(-\overline{s} - N)/2} \right]
\underset{N \rightarrow \infty}{\longrightarrow} \frac{d^p}{dx^p} (e^{-i/x})
\end{equation}
with, for $|x| \geq x_0 > 0$,
\begin{equation}\label{bound on convergence of G}
\left|\frac{d^p}{dx^p} \left[(1 - i/Nx)^{(N-s)/2 - (N-n)} (1+i/Nx)^{(-\overline{s} - N)/2} \right]\right|
\leq \frac{C(x_0,s,p)}{|x|^{p + \mathds{1}_{p > 0} }}.
\end{equation}
Recall that by Lemma \ref{derivatives F_{n,h,a}}, one has the convergence
\begin{equation}\label{convergence of F}
\frac{d^p}{dx^p} (F_{n,h,a} (Nx))  \underset{N \rightarrow \infty}{\longrightarrow}
\frac{d^p}{dx^p} ( \,_1F_1  [h,a;2i/x]),
\end{equation}
and the bound
\begin{equation}\label{bound on convergence of F}
\left| \frac{d^p}{dx^p} (F_{n,h,a} (Nx)) \right| \leq \frac{C(x_0, h,a,p)}{|x|^{p + \mathds{1}_{p > 0}}}
\leq \frac{C(x_0,s,p)}{|x|^{p + \mathds{1}_{p > 0}}},
\end{equation}
since $(h,a)$ only depends on $s$ in the relevant cases (see the beginning of the proof).
Moreover,
\begin{equation}\label{local bound}
\left| \frac{d^p}{dx^p} \left[ (2/x)^{N-n} (2/|x|)^{ \Re{s}} \right]\right| \leq \frac{C(s,p)}{|x|^{ \Re{s}+(N-n)+p}}.
\end{equation}
We can now give the derivatives of $\Phi_N$, using \eqref{Phi written out}. One has for $p\geq0$:
\begin{align*}
( \textrm{Sgn} (x))^N \frac{d^p}{dx^p} (\Phi_N(x)) =& D'(N,s) e^{\pi \Im{s} \textrm{Sgn} (x)/2}\\
&\sum_{q_1 + q_2 + q_3 = p} \frac{p!}{q_1! q_2 ! q_3!} \frac{d^{q_1}}{dx^{q_1}} \left[ (2/x)^{N-n}
(2/|x|)^{\Re{s}} \right]\\
& \frac{d^{q_2}}{dx^{q_2}} \left[ (1-i/Nx)^{(N-s)/2 - (N-n)}
(1+i/Nx)^{(-N-\overline{s})/2} \right] \\
& \frac{d^{q_3}}{dx^{q_3}} \left[ F_{n,h,a} (Nx) \right].
\end{align*}
By \eqref{D'(N,s)}, \eqref{convergence of G} and \eqref{convergence of F}, whenever $s$, $x$ and $N-n\in\{0,1\}$ are fixed, this expression tends to
\begin{align*}
e^{\pi \Im{s} \textrm{Sgn} (x)/2} & \sum_{q_1 + q_2 + q_3 = p} \frac{p!}{q_1! q_2 ! q_3!} \frac{d^{q_1}}{dx^{q_1}} \left[ (2/x)^{N-n}
(2/|x|)^{\Re{s}} \right]  \\
& \frac{d^{q_2}}{dx^{q_2}} \left[ e^{-i/x} \right]
 \frac{d^{q_3}}{dx^{q_3}} \left( \,_1F_1[h,a;2i/x] \right),
\end{align*}
for $N \rightarrow \infty$. But this is precisely the $p$-th derivative of $\tilde{P}$ at $x$ if $\Phi_N = \tilde{P}_N$, and the $p$-th derivative of $Q$ at $x$ if $\Phi_N = Q_N$. Moreover, for $|x| \geq x_0 > 0$, one easily obtains the bound
\begin{equation*}
\left|\frac{d^p}{dx^p} (\Phi_N(x)) \right| \leq \frac{C(x_0,s,p)}{|x|^{\Re{s} + (N-n) +p}},
\end{equation*}
using \eqref{D'(N,s)}, \eqref{bound on convergence of G}, \eqref{bound on convergence of F} and \eqref{local bound}. This completes the proof of the Lemma.
\end{proof}

\begin{lemma} \label{fg}
Let $f$ and $g$ be two functions which are $C^{\infty}$ from $\mathds{R}^\ast$ to $\mathds{R}$. We define the function
$\phi$ from $(\mathds{R}^\ast)^2$ to $\mathds{R}$ by
\begin{equation*}
\phi(x,y) := \frac{f(x) g(y)- g(x) f(y)}{x-y},
\end{equation*}
for $x \neq y$, and
\begin{equation*}
\phi(x,x) := f'(x) g(x) - g'(x)f(x).
\end{equation*}
Then, $\phi$ is $C^{\infty}$ on $(\mathds{R}^*)^2$ and for all $p, q \in\mathds{N}_0$:
\newline
(a) If $x \neq y$:
\begin{equation*}
\frac{\partial^{p+q}\phi}{\partial x^p \partial y^q} = \sum_{k=0}^p \sum_{l=0}^q
C_p^k C_q^l \frac{ f^{(k)} (x) g^{(l)} (y) - g^{(k)} (x) f^{(l)} (y) }{(x-y)^{p+q-k-l+1}} (-1)^{p-k}(p+q-k-l)!.
\end{equation*}
(b) If $x$ and $y$ have same sign:
\begin{align*}
\frac{\partial^{p+q}\phi}{\partial x^p \partial y^q}=& \sum_{k=0}^q C_q^k \left[ g^{(q-k)} (y)
\int_0^1 f^{(k+p+1)} (y+\theta(x-y)) \theta^p (1- \theta)^k d \theta \right. \\
& \left.- f^{(q-k)} (y) \int_0^1 g^{(k+p+1)} (y+\theta(x-y)) \theta^p (1- \theta)^k d \theta \right].
\end{align*}
\end{lemma}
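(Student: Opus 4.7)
The plan is to cover $(\mathds{R}^\ast)^2$ by two overlapping open sets and derive one formula on each. Let $U_1 := \{(x,y) \in (\mathds{R}^\ast)^2 : x \neq y\}$, on which $\phi$ is a bona fide quotient, so that part (a) will be derived directly; and let $U_2 := \{(x,y) : xy > 0\}$, on which the segment joining $x$ and $y$ lies entirely in $\mathds{R}^\ast$ so that a Hadamard-type integral representation will be available and will yield part (b), extending $\phi$ smoothly across the diagonal. Since $U_1 \cup U_2 = (\mathds{R}^\ast)^2$, the $C^{\infty}$ claim will follow automatically from these two formulas.

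To establish (a), I would write $\phi = N \cdot R$ on $U_1$ with $N(x,y) := f(x) g(y) - g(x) f(y)$ and $R(x,y) := 1/(x-y)$, and then apply the two-variable Leibniz formula
\[
\partial_x^p \partial_y^q (N \cdot R) \;=\; \sum_{k=0}^p \sum_{l=0}^q C_p^k C_q^l \,\bigl(\partial_x^k \partial_y^l N\bigr)\,\bigl(\partial_x^{p-k} \partial_y^{q-l} R\bigr).
\]
The mixed partial of $N$ is immediate, $\partial_x^k \partial_y^l N = f^{(k)}(x) g^{(l)}(y) - g^{(k)}(x) f^{(l)}(y)$. For $R$ a short induction (or two successive one-variable computations) gives $\partial_x^a \partial_y^b R = (-1)^a (a+b)!/(x-y)^{a+b+1}$. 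Substituting with $a = p-k$, $b = q-l$ then reassembles exactly formula (a).

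For (b), the starting point is that on $U_2$ the segment $[x,y]$ remains in $\mathds{R}^\ast$, so the Hadamard identity $(h(x)-h(y))/(x-y) = \int_0^1 h'(y+\theta(x-y))\,d\theta$ applies to both $h = f$ and $h = g$ and remains valid even on the diagonal $x = y$. Splitting the numerator as $f(x)g(y) - g(x)f(y) = g(y)\bigl(f(x)-f(y)\bigr) - f(y)\bigl(g(x)-g(y)\bigr)$ yields
\[
\phi(x,y) \;=\; g(y)\int_0^1 f'(y+\theta(x-y))\,d\theta \;-\; f(y)\int_0^1 g'(y+\theta(x-y))\,d\theta,
\]
which is manifestly $C^{\infty}$ on $U_2$. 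Differentiating $p$ times in $x$ under the integral sign bumps $f'$ to $f^{(p+1)}$ (and similarly for $g$) and pulls down $\theta^p$ via the chain rule; then applying the one-variable Leibniz rule $q$ times in $y$ to the outer products distributes $q-k$ derivatives to $g(y)$ (resp.\ $f(y)$) and $k$ to the integrand, the latter contributing the chain-rule factor $(1-\theta)^k$ and advancing the inner derivative to $f^{(p+k+1)}$ (resp.\ $g^{(p+k+1)}$). Collecting terms produces formula (b).

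The main obstacle is neither conceptual nor analytical but purely combinatorial: signs, binomials and factorials must be tracked carefully through the Leibniz calculation for (a), and the factor $\theta^p(1-\theta)^k$ together with the correct order of the derivatives must be isolated cleanly in the computation leading to (b). The agreement of the two formulas on the overlap $U_1 \cap U_2$ is automatic, since both compute the same mixed partial of the same $C^{\infty}$ function there, so no separate consistency check will be needed.
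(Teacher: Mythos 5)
Your proposal is correct and follows essentially the same route as the paper: formula (a) is obtained by direct differentiation of the quotient off the diagonal (your two-variable Leibniz computation is just an organized form of the paper's induction), and formula (b) comes from the Hadamard-type integral representation on the set where $x$ and $y$ have the same sign, with differentiation under the integral justified by boundedness of the derivatives of $f$ and $g$ on compact subsets of $\mathds{R}^*$. The covering of $(\mathds{R}^\ast)^2$ by the two open sets and the resulting smoothness conclusion match the paper's argument.
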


\begin{proof}
(a) By induction, one proves that for all $p,q\in\mathds{N}_0$, and for $x,y\in\mathds{R}$ distincts and different from zero, it is possible to take, in a neighborhood of $(x,y)$, $p$ derivatives of $\phi$ with respect to $x$ and $q$ derivatives  of $\phi$ with respect to $y$, in any order, with a result equal to the expression given in the statement of the Lemma. This implies the existence and the continuity of all partial derivatives of $\phi$ in $(\mathds{R}^\ast)^2 \backslash \{ (x,x), x \in \mathds{R}^\ast \}$. Therefore, $\phi$ is
$C^{\infty}$ in this open subset of $(\mathds{R^\ast})^2$. \newline
(b) With the same method as in (a), we obtain that $\phi$ is $C^{\infty}$ on $(\mathds{R}_-^*)^2 \cup (\mathds{R}_+^*)^2$. The only technical issues are the continuity and the derivation under the integral. These can easily be justified by the boundedness of the derivatives of $f$ and $g$ in any compact set of
$\mathds{R}^*$.
\end{proof}

\begin{proposition} \label{convergence of K}
Let $x, y \in \mathds{R}^*$ and let $\Re{s} > -1/2$. Then $K_{[N]}$ and $K_\infty$ are $C^{\infty}$ in $(\mathds{R}^\ast)^2$ and for all $p, q \in \mathds{N}_0$,
\begin{equation*}
(\textrm{Sgn} (xy))^N \frac{\partial^{p+q}}{\partial x^p \partial y^q} K_{[N]} (x,y) \underset{N \rightarrow \infty}{\longrightarrow} \frac{\partial^{p+q}}{\partial x^p \partial y^q} K_\infty (x,y).
\end{equation*}
Moreover, for any $x_0>0$, and  $|x|, |y| \geq x_0 > 0$:
\begin{equation*}
\left| \frac{\partial^{p+q}}{\partial x^p \partial y^q} K_{[N]} (x,y)\right| \leq \frac{C(x_0,s,p,q)} {|x|^{\Re{s} + p + 1} |y|^{\Re{s} + q+ 1}}.
\end{equation*}
\end{proposition}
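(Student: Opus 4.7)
The plan is to reduce Proposition \ref{convergence of K} to Lemmas \ref{derivatives P_N Q_N} and \ref{fg} by first rewriting $K_{[N]}$ and $K_\infty$ in a common shape. Using \eqref{kernel with p tilde}, \eqref{KNN}, \eqref{BO const}, the definitions of $\tilde P_N,Q_N$ from Lemma \ref{derivatives P_N Q_N}, and the antisymmetry identity $\tilde p_N(Nx)p_{N-1}(Ny)-p_{N-1}(Nx)\tilde p_N(Ny)=p_N(Nx)p_{N-1}(Ny)-p_{N-1}(Nx)p_N(Ny)$, a direct computation yields
\[
K_{[N]}(x,y)=c_s\,\frac{\tilde P_N(x)Q_N(y)-Q_N(x)\tilde P_N(y)}{x-y},\qquad K_\infty(x,y)=c_s\,\frac{\tilde P(x)Q(y)-Q(x)\tilde P(y)}{x-y},
\]
with $c_s=\frac{1}{2\pi}\frac{\Gamma(s+1)\Gamma(\overline{s}+1)}{\Gamma(2\Re s+1)\Gamma(2\Re s+2)}$ (the $N$-dependent factor $\Gamma(2\Re s+N+1)/\Gamma(N)$ in $C_{N,s}$ cancels against the product of the normalizations of $\tilde P_N$ and $Q_N$). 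Since $\tilde P_N,Q_N$ are $C^\infty$ on $\mathds R$ and $\tilde P,Q$ are $C^\infty$ on $\mathds R^*$ by Lemma \ref{derivatives P_N Q_N}, Lemma \ref{fg} immediately delivers the $C^\infty$-regularity of both kernels on $(\mathds R^*)^2$.

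For the pointwise convergence I would use both cases of Lemma \ref{fg}. When $x\neq y$, formula (a) expresses $\partial^{p+q}_{x^p y^q}K_{[N]}(x,y)$ as a finite sum of terms proportional to $[\tilde P_N^{(k)}(x)Q_N^{(l)}(y)-Q_N^{(k)}(x)\tilde P_N^{(l)}(y)]/(x-y)^{p+q-k-l+1}$; factoring out $(\textrm{Sgn}(x))^N(\textrm{Sgn}(y))^N=(\textrm{Sgn}(xy))^N$ and invoking the pointwise limits of Lemma \ref{derivatives P_N Q_N} term by term yields the corresponding sum for $K_\infty$. When $x$ and $y$ share a sign (in particular on the diagonal), I would use formula (b) instead, which writes $\partial^{p+q}_{x^p y^q}K_{[N]}(x,y)$ as a combination of integrals $\int_0^1 \tilde P_N^{(k+p+1)}(y+\theta(x-y))\,\theta^p(1-\theta)^k\,d\theta$ (and the analogous ones with $Q_N$); the integration point $z=y+\theta(x-y)$ lies on the segment from $y$ to $x$, so $|z|\geq\min(|x|,|y|)>0$ and the bounds of Lemma \ref{derivatives P_N Q_N} provide an $N$-uniform continuous dominant in $\theta$. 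Dominated convergence then yields convergence of the integrals, again up to the common sign $(\textrm{Sgn}(xy))^N$.

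The uniform upper bound is where the main technical care is needed, and where both cases of Lemma \ref{fg} are essential. I would split the plane according to the size of $|x-y|$ relative to $\max(|x|,|y|)$. If $|x-y|\geq\max(|x|,|y|)/2$, I apply formula (a): each summand is at most $C\,|x|^{-\Re s-k}|y|^{-\Re s-l-1}|x-y|^{-(p+q-k-l+1)}$, and writing $|x-y|^{p+q-k-l+1}=|x-y|^{p-k+1}|x-y|^{q-l}\geq C\,|x|^{p-k+1}|y|^{q-l}$ (using $|x-y|\geq|x|/2$ and $|x-y|\geq|y|/2$) produces the asserted $|x|^{-\Re s-p-1}|y|^{-\Re s-q-1}$. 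If $|x-y|<\max(|x|,|y|)/2$, then $x,y$ must share a sign (otherwise $|x-y|=|x|+|y|\geq\max(|x|,|y|)$) and a short estimate shows $|x|/|y|,|y|/|x|\in[1/2,2]$; in this regime I apply formula (b). Each $z=y+\theta(x-y)$ satisfies $|z|\geq\min(|x|,|y|)\geq\max(|x|,|y|)/2$, so Lemma \ref{derivatives P_N Q_N} gives $|\tilde P_N^{(k+p+1)}(z)|\leq C/\max(|x|,|y|)^{\Re s+k+p+1}$; combining with $|Q_N^{(q-k)}(y)|\leq C/|y|^{\Re s+q-k+1}$ and the comparability $|x|\sim|y|$ reshuffles the powers into the required asymmetric form.

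The hardest step to anticipate is this near-diagonal case. Formula (a) genuinely fails on a neighbourhood of the diagonal because of the $(x-y)^{-(p+q-k-l+1)}$ singularity, so one has no choice but to invoke the integral representation (b); the argument then relies on the slightly subtle geometric observation that $|x-y|<\max(|x|,|y|)/2$ automatically forces same signs and bounded ratio of magnitudes, which is exactly what allows the $|z|^{-\Re s-k-p-1}$ loss inside the integral to be redistributed asymmetrically between $|x|$ and $|y|$ in the form $|x|^{-\Re s-p-1}|y|^{-\Re s-q-1}$ demanded by the proposition.
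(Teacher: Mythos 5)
Your proposal is correct and follows essentially the same route as the paper: rewrite $K_{[N]}$ and $K_\infty$ through $\tilde P_N,Q_N$ and $\tilde P,Q$ (with the normalizations cancelling to the constant $c_s$), then combine Lemma \ref{fg} with the convergence and bounds of Lemma \ref{derivatives P_N Q_N}, using formula (a) away from the diagonal and formula (b) in the near-diagonal/same-sign regime, exactly as the paper's case split ($xy<0$ or $\max(|x|,|y|)>2\min(|x|,|y|)$ versus the complementary case) does. Your explicit redistribution of the powers of $|x-y|$ and of $|z|\geq\min(|x|,|y|)$ fills in the details the paper leaves implicit, and is sound.
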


Note that the pointwise convergence in the case $p=q=0$ corresponds to the convergence result for the kernels given by Borodin and Olshanski \cite{Borodin-Olshanski}.

\begin{proof}
One has
\begin{align*}
& (\textrm{Sgn} (xy))^N K_{[N]} (x,y) = \frac{1}{2\pi} \frac{\Gamma(s+1) \Gamma(\overline{s}+1)}
{\Gamma (2 \Re{s} + 1) \Gamma (2 \Re{s} + 2) } \\
&\frac{ (\textrm{Sgn} (x))^N \tilde{P}_N(x)
(\textrm{Sgn} (y))^N Q_N (y) - (\textrm{Sgn} (y))^N \tilde{P}_N(y)
(\textrm{Sgn} (x))^N Q_N (x) }{x-y}
\end{align*}
for $x \neq y$, and
\begin{equation*}
K_{[N]} (x,x) =  \frac{1}{2\pi} \frac{\Gamma(s+1) \Gamma(\overline{s}+1)} {\Gamma (2 \Re{s} + 1) \Gamma (2 \Re{s} + 2) } ( \tilde{P}_N'(x) Q_N (x) - Q_N'(x) \tilde{P}_N(x)),
\end{equation*}
with $\tilde{P}_N$ and $Q_N$ defined in Lemma \ref{derivatives P_N Q_N}. Recall the definition of $K_\infty$ in \eqref{definition of K infinity} and \eqref{definition of K infinity for x=x}.
Now, $\tilde{P}_N$, $Q_N$, $\tilde{P}$ and $Q$ are in $C^{\infty} (\mathds{R}^\ast)$ (see Lemma \ref{derivatives P_N Q_N}) and hence, by Lemma \ref{fg}, $K_{[N]}$ and $K_\infty$ are in $C^{\infty} ((\mathds{R}^\ast)^2)$.

Moreover, by Lemma \ref{derivatives P_N Q_N}, the derivatives of $x \mapsto \textrm{Sgn}^N (x) \tilde{P}_N(x)$ and $x \mapsto \textrm{Sgn}^N (x) Q_N(x)$ converge pointwise to the corresponding derivatives of $\tilde{P}$ and $Q$. Considering, for $x \neq y$, the expression (a) of Lemma \ref{fg}, and for $x=y$, the expression (b), one easily deduces the pointwise convergence of the derivatives of $(x,y) \mapsto (\textrm{Sgn} (xy))^N K_{[N]}(x,y) $ towards the corresponding derivatives of $K_\infty$.

Finally, the bounds given in the statement of the Lemma can be obtained from the bounds of the derivatives of $\tilde{P}_N$ and $Q_N$, given in Lemma \ref{derivatives P_N Q_N}, and by applying the formula (a) of Lemma \ref{fg} if $xy < 0$ or $\max(|x|,|y|) > 2 \min(|x|,|y|)$ (which implies $|x-y| \geq \max(|x|,|y|)/2$), or the formula (b) if $xy > 0$ and $\max(|x|,|y|) \leq 2 \min(|x|,|y|)$.
\end{proof}

Summarizing, we have:

\begin{proposition}\label{K_N and K_infinity satisfy the crucial conditions}
Let $s$ be such that $\Re{s}>-\frac{1}{2}$. Then, the restriction to $\mathds{R}_+^\ast$ of the scaled kernel $K_{[N]}$ and the kernel $K_\infty$  satisfy the conditions of Lemma \ref{key estimates for convergence theorem}. Moreover, for all $p,q\in\mathds{N}_0$, the partial derivatives
\begin{equation*}
\textrm{Sgn}(xy)^N\frac{\partial^{p+q}}{\partial x^p \partial y^q}K_{[N]}(x,y)
\end{equation*}
converge pointwise to the corresponding partial derivatives of $K_\infty(x,y)$.
\end{proposition}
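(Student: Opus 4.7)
The statement is essentially a bookkeeping corollary of Proposition \ref{convergence of K} together with the standard properties of determinantal point processes, so my plan is to verify the three hypotheses of Lemma \ref{key estimates for convergence theorem} one by one on $\mathds{R}_+^*$, and then read off the convergence of partial derivatives directly from Proposition \ref{convergence of K}. Smoothness is immediate: Proposition \ref{convergence of K} already guarantees that $K_{[N]}$ and $K_\infty$ are $C^\infty$ on $(\mathds{R}^*)^2$, hence a fortiori in $C^2((\mathds{R}_+^*)^2,\mathds{R})$. Symmetry in $(x,y)$ is visible on the Christoffel--Darboux form (\ref{formula corr kernel})/(\ref{kernel with p tilde}) and on (\ref{definition of K infinity}): both the numerator $\phi(x)\psi(y)-\phi(y)\psi(x)$ and the denominator $x-y$ are antisymmetric, so the ratio is symmetric.

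For the positivity requirement on the matrices $(K(x_i,x_j))_{i,j=1}^k$ with $x_i>0$, I would invoke the fact that both kernels are correlation kernels of genuine determinantal point processes: for $K_N$ by Theorem \ref{correlation Kernel}, and for $K_\infty$ by Remark \ref{K_infty defines a det process} (after Borodin--Olshanski \cite{Borodin-Olshanski}). This gives positivity of all principal minors, so the matrix is symmetric real positive semi-definite, admitting a factorization $A A^T$ with $A$ real, which is precisely the Gram form $(e_i \mid f_j)=K(x_i,x_j)$ used in the proof of Lemma \ref{key estimates for convergence theorem}. For the derivative bound, Proposition \ref{convergence of K} yields, for $p+q \leq 2$ and $x,y \geq x_0 > 0$,
\begin{equation*}
\left|\frac{\partial^{p+q}}{\partial x^p \partial y^q} K_{[N]}(x, y)\right| \leq \frac{C(x_0, s, p, q)}{x^{\Re s + p + 1}\, y^{\Re s + q + 1}} \leq \frac{C'(x_0, s)}{(xy)^{\Re s + 1}},
\end{equation*}
after absorbing the extra factors $x_0^{-p}, x_0^{-q}$ into the constant. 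Setting $\alpha := \Re s + 1$, which satisfies $\alpha > 1/2$ since $\Re s > -1/2$, gives the hypothesis of Lemma \ref{key estimates for convergence theorem} with a uniform $\alpha$. The same bound holds for $K_\infty$, either as a pointwise limit of the previous estimate or by the analogous direct estimate on $\tilde P, Q$.

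Finally, the pointwise convergence of partial derivatives is immediate from Proposition \ref{convergence of K}: on $\mathds{R}_+^*$ one has $\textrm{Sgn}(xy)=1$, so $\textrm{Sgn}(xy)^N = 1$, and the displayed convergence of Proposition \ref{convergence of K} is exactly the statement we want on the first quadrant. The step I expect to be slightly delicate is the positivity of $(K_{[N]}(x_i,x_j))$ as a \emph{real} symmetric matrix: although this is standard for a correlation kernel, the justification passes through the real-valuedness of $K_N$ on $\mathds{R}^2$ (which comes from the Hermitian nature of the underlying ensemble), because the naive Christoffel--Darboux decomposition $K_N(x,y) = \sum_m p_m(x) p_m(y) \sqrt{w_H(x)w_H(y)}/\|p_m\|^2$ involves polynomials that are complex-valued for non-real $s$ and therefore does not directly produce real Gram vectors.
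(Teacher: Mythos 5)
Your proposal is correct and follows essentially the same route as the paper, whose proof is a one-line appeal to Proposition \ref{convergence of K} together with the fact that $K_{[N]}$ and $K_\infty$ are real symmetric positive kernels of determinantal processes (Remark \ref{K_infty defines a det process} for $K_\infty$). Your extra bookkeeping --- absorbing the factors $x^{-p}y^{-q}$ into the constant to get $\alpha=\Re{s}+1>1/2$, observing $\textrm{Sgn}(xy)^N=1$ on $(\mathds{R}_+^*)^2$, and flagging that real symmetry/positivity rests on the determinantal (projection) structure rather than on the complex Christoffel--Darboux factors --- just makes explicit what the paper leaves implicit.
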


\begin{proof}
This follows immediately from Proposition \ref{convergence of K} and the fact that these kernels are real symmetric and positive because they are kernels of determinantal processes on the real line (see remark \ref{K_infty defines a det process} for the kernel $K_\infty$).
\end{proof}

The next step  is to analyze the convergence of the Fredholm determinant of $K_{N,J}$ and its derivatives to the corresponding derivatives of the Fredholm determinant of $K_{\infty,J}$, for $J=(t,\infty)$, $t>0$.

\begin{lemma} \label{derivation F(t,x)}
Let $F$ be a function defined from $(\mathds{R}_+^*)^{k+1}$ to $\mathds{R}$, for some $k \in \mathds{N}$. We suppose that $F$ is in $C^1$, and that there exists, for some $\alpha>1$ and for all $x_0>0$, a bound of the form
\begin{equation*}
\left|F(t,x_1,x_2,...,x_k)\right| + \left| \frac{\partial}{\partial t} F(t,x_1,x_2,...,x_k)\right| \leq \frac{C(x_0)}
{(x_1...x_k)^{\alpha}},
\end{equation*}
for all $t, x_1,...,x_k \geq x_0$. Then, the integrals involved in the definitions of the following two functions from $\mathds{R}_+^*$ to $\mathds{R}$ are absolutely convergent:
\begin{equation*}
H_0 : t \mapsto \int_{(t, \infty)^k} F(t,x_1,\ldots,x_k) dx_1\ldots dx_k,
\end{equation*}
and
\begin{align*}
H_1 : t \mapsto & \int_{(t, \infty)^k} \frac{\partial}{\partial t} F(t,x_1,\ldots,x_k) dx_1\ldots dx_k \\
& - \sum_{l=1}^k \int_{(t, \infty)^{k-1}} F(t,x_1,\ldots,x_{l-1},t,x_{l+1},\ldots,x_k) dx_1\ldots dx_{l-1}
dx_{l+1}\ldots dx_k.
\end{align*}
Moreover, the first derivative of $H_0$ is continuous and equal to $H_1$.
\end{lemma}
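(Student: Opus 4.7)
The plan is to (i) verify absolute convergence from the $\alpha>1$ bound, (ii) compute $H_0'$ at a fixed $t>0$ by a direct difference-quotient argument combining the mean value theorem with a decomposition of $(t,\infty)^k\setminus(t+h,\infty)^k$ into slabs, and (iii) deduce continuity of $H_0'=H_1$ from dominated convergence. For (i): fix $t>0$ and pick $x_0\in(0,t]$. The hypothesis bounds both $|F|$ and $|\partial_t F|$ by $C(x_0)/(x_1\cdots x_k)^\alpha$ on $[x_0,\infty)^{k+1}$, and since $\int_t^\infty x^{-\alpha}\,dx<\infty$ for $\alpha>1$, Fubini gives absolute convergence of the two integrals $\int_{(t,\infty)^k}F$ and $\int_{(t,\infty)^k}\partial_t F$. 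For the $l$-th boundary term, evaluating at $x_l=t\ge x_0$ produces the bound $C(x_0)/(t^\alpha\prod_{j\ne l}x_j^\alpha)$, which is integrable on $(t,\infty)^{k-1}$.

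For (ii), fix $t>0$ and take $h>0$ small enough that $t+h<2t$. Writing $\vec x=(x_1,\ldots,x_k)$, split
\begin{align*}
H_0(t+h)-H_0(t) &= \int_{(t+h,\infty)^k}\bigl[F(t+h,\vec x)-F(t,\vec x)\bigr]\,d\vec x \\
&\quad- \int_{(t,\infty)^k\setminus(t+h,\infty)^k}F(t,\vec x)\,d\vec x.
\end{align*}
Divided by $h$, the first integral equals $\int_{(t+h,\infty)^k}\partial_t F(\xi_{\vec x},\vec x)\,d\vec x$ for some $\xi_{\vec x}\in(t,t+h)$ by the mean value theorem, and converges to $\int_{(t,\infty)^k}\partial_t F(t,\vec x)\,d\vec x$ by dominated convergence using the uniform bound on $|\partial_t F|$. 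For the second integral, classify each $\vec x$ by the smallest index $l$ with $x_l\le t+h$ to obtain the disjoint decomposition
\begin{equation*}
(t,\infty)^k\setminus(t+h,\infty)^k=\bigsqcup_{l=1}^k A_l,\qquad A_l:=(t+h,\infty)^{l-1}\times(t,t+h]\times(t,\infty)^{k-l}.
\end{equation*}
On each $A_l$, Fubini and the uniform bound let us average $F(t,\vec x)$ in the $x_l$-variable over $(t,t+h]$; by continuity of $F$ this average tends to the boundary value $F(t,x_1,\ldots,t,\ldots,x_k)$ as $h\to 0$, while dominated convergence makes the outer integral over $(t+h,\infty)^{l-1}\times(t,\infty)^{k-l}$ converge to $\int_{(t,\infty)^{k-1}}$. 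Thus the right-hand derivative of $H_0$ at $t$ exists and equals $H_1(t)$; the left-hand derivative is handled by the symmetric decomposition of $(t+h,\infty)^k\setminus(t,\infty)^k$ for $h<0$ and yields the same value, so $H_0'(t)=H_1(t)$.

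Finally, continuity of $H_1$ at any $t_0>0$ follows from dominated convergence with $x_0=t_0/2$: each summand in the definition of $H_1$ has a jointly continuous integrand uniformly dominated by an integrable function for $t\in[t_0/2,2t_0]$. The main obstacle is purely bookkeeping in step (ii) -- getting the slab decomposition right and verifying that the $x_l$-average transfers cleanly to the boundary integral in the limit. Once the uniform bounds on $F$ and $\partial_t F$ are in hand, the remainder is a routine application of differentiation under the integral sign with moving boundaries.
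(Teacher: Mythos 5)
Your proof is correct, but it takes a genuinely different route from the paper's. The paper never touches the difference quotient of $H_0$: it first notes absolute convergence, then uses Fubini's theorem to show that $\int_t^{t'} H_1(u)\,du = H_0(t')-H_0(t)$ for $0<t<t'$ (the boundary terms in $H_1$ arise from integrating $F(u,\ldots,u,\ldots)$ in the endpoint variable), proves that $H_1$ is continuous by dominated convergence (writing the $t$-dependent domains with indicator functions, whose pointwise limits hold off the null set where some $x_j$ equals $t$), and then concludes $H_0'=H_1$ from the fundamental theorem of calculus. You instead differentiate $H_0$ head-on: the mean value theorem plus the uniform bound on $\partial_t F$ handles the interior term, and your disjoint slab decomposition $A_l=(t+h,\infty)^{l-1}\times(t,t+h]\times(t,\infty)^{k-l}$ of $(t,\infty)^k\setminus(t+h,\infty)^k$, with an average-in-$x_l$ argument, produces the boundary terms; the left-hand derivative is handled symmetrically, and you finish with the same dominated-convergence continuity argument for $H_1$ (where, as in the paper, one should note that the moving domain is absorbed into indicators converging almost everywhere). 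The trade-off: the paper's route concentrates all the moving-boundary bookkeeping into one Fubini computation and only ever needs continuity of $H_1$, which is arguably cleaner; your route is more elementary and self-contained, at the cost of the one-sided difference-quotient analysis, which you carry out correctly.
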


\begin{proof}
Due to the bound given in the Lemma, it is clear that all the integrals in the definition of $H_0$ and $H_1$ are absolutely convergent. Therefore, for $0<t<t'$, we can use Fubini's Theorem in order to compute the integral
\begin{equation*}
\int_{t}^{t'} H_1(u) du.
\end{equation*}
Straightforward computations show that this integral is equal to $H_0(t') - H_0(t)$. Hence, if we prove that $H_1$ is continuous, we are done. Now, let $t> x_0> 0$. For $t' > x_0$, one has
\begin{align*}
& |H_1(t') - H_1(t)|  \leq \int_{(x_0, \infty)^k} \left|\frac{\partial}{\partial t'} F(t',x_1,\ldots,x_k)
\mathds{1}_{\{x_1,\ldots,x_k > t'\}}  \right.\\
& \left. - \frac{\partial}{\partial t} F(t,x_1,\ldots,x_k)
\mathds{1}_{\{x_1,\ldots,x_k > t\}}\right|  dx_1\ldots dx_k \\
&+ \sum_{l=1}^k \int_{(x_0, \infty)^{k-1}} \left|F(t',x_1,\ldots,x_{l-1},t',x_{l+1},\ldots,x_k)
\mathds{1}_{\{x_1,\ldots,x_{l-1}, x_{l+1},\ldots x_k > t'\}} \right.\\
&\left. -  F(t,x_1,\ldots,x_{l-1},t,x_{l+1},\ldots,x_k)
\mathds{1}_{\{x_1,\ldots,x_{l-1}, x_{l+1},\ldots x_k > t\}} \right| dx_1\ldots dx_{l-1}
dx_{l+1}\ldots dx_k.
\end{align*}
All the terms inside the integrals converge to zero almost everywhere when $t'\rightarrow t$ (more precisely, whenever the minimum of the $x_j$'s is different from $t$). Hence, by dominated convergence, $|H_1 (t')- H_1(t)|$ tends to zero when $t'\rightarrow t$.
\end{proof}

\begin{lemma} \label{Fredholm}
Let $K$ be a function satisfying the conditions of Lemma \ref{key estimates for convergence theorem}. Then, using the notation of that Lemma,
\begin{equation*}
\sum_{k \geq 1} \frac{1}{k!} \int_{(t, \infty)^k} \rho_k(x_1,\ldots,x_k) dx_1\ldots dx_k < \infty
\end{equation*}
for all $t> 0$. Moreover, the Fredholm determinant  $F$, from $\mathds{R}_+^*$ to $\mathds{R}$, defined in (\ref{Rubin})
is in $C^3$, and its derivatives are given by
\begin{align*}
F'(t) &= \sum_{k \geq 0} \frac{(-1)^k}{k!} \int_{(t, \infty)^k} \rho_{k+1}(t,x_1,\ldots,x_k) dx_1\ldots dx_k,\\
F''(t) &= \sum_{k \geq 0} \frac{(-1)^k}{k!} \int_{(t, \infty)^k} \frac{\partial}{\partial t}
\rho_{k+1}(t,x_1,\ldots,x_k) dx_1\ldots dx_k,\\
F'''(t) &= \sum_{k \geq 0} \frac{(-1)^k}{k!} \int_{(t, \infty)^k} \frac{\partial^2}{\partial t^2}
\rho_{k+1}(t,x_1,\ldots,x_k) dx_1\ldots dx_k,
\end{align*}
where all the sums and the integrals above are absolutely convergent.
\end{lemma}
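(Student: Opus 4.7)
The plan is to differentiate the Fredholm series
\[
F(t) = 1 + \sum_{k\geq 1} \frac{(-1)^k}{k!}\, I_k(t), \qquad I_k(t) := \int_{(t,\infty)^k} \rho_k(x_1,\ldots,x_k)\, dx_1\cdots dx_k,
\]
term by term, by applying Lemma \ref{derivation F(t,x)} iteratively and using the uniform bounds from Lemma \ref{key estimates for convergence theorem} to justify interchanging summation and differentiation. The absolute convergence statement is immediate: by \eqref{main bound for K} with $p=0$, one has $|\rho_k(x_1,\ldots,x_k)| \leq (C(t_0))^k/(x_1\cdots x_k)^{2\alpha}$ for $x_i \geq t_0 > 0$, and since $2\alpha>1$, integration over $(t,\infty)^k$ with $t\geq t_0$ yields $I_k(t) \leq (M(t_0)\, t^{1-2\alpha})^k$ for some $M(t_0)$, which divided by $k!$ is summable, locally uniformly in $t$.

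Next I would compute $F'$. Since $\rho_k$ does not depend on $t$, Lemma \ref{derivation F(t,x)} applied to $I_k$ gives only boundary contributions, and by symmetry of $\rho_k$ these collapse to $I_k'(t) = -k\int_{(t,\infty)^{k-1}}\rho_k(t,x_1,\ldots,x_{k-1})\,dx_1\cdots dx_{k-1}$. The same type of bound — now applied to $\rho_{k+1}(t,\cdot)$ via \eqref{main bound for K} with $p=0$ — shows that $\sum_k |I_k'(t)|/k!$ converges locally uniformly, whence termwise differentiation of the Fredholm series is legitimate. A reindexing yields the stated formula for $F'$.

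For $F''$ and $F'''$ I would iterate, setting $J_k(t) := \int_{(t,\infty)^k}\rho_{k+1}(t,x_1,\ldots,x_k)\,dx_1\cdots dx_k$ and analogously for the third derivative. Lemma \ref{derivation F(t,x)} now produces both a $\partial_t$-integral and $k$ boundary terms of the form
\[
\int_{(t,\infty)^{k-1}}\rho_{k+1}(t,x_1,\ldots,x_{l-1},t,x_{l+1},\ldots,x_k)\,dx_1\cdots dx_{l-1}dx_{l+1}\cdots dx_k.
\]
The crucial observation is that these boundary terms vanish identically by property \eqref{partial rho equal 0} of Lemma \ref{key estimates for convergence theorem}: $\rho_{k+1}$ is zero whenever two of its arguments coincide. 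Thus $J_k'(t)$ equals just the $\partial_t$-integral, giving the formula for $F''$. Differentiating once more, the new boundary terms involve $\partial_t\rho_{k+1}$ evaluated at a coincident pair; by the full symmetry of $\rho_{k+1}$ and the $p=1$ case of \eqref{partial rho equal 0}, these vanish as well, giving $F'''$. Uniform convergence of the termwise-differentiated series on $[t_0,\infty)$ is obtained as before using \eqref{main bound for K} with $p=1,2$ applied to $\rho_{k+1}$.

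The main obstacle — and the reason Lemma \ref{key estimates for convergence theorem} was set up precisely as it is — lies in the second and third derivative steps, where the boundary contributions produced by Lemma \ref{derivation F(t,x)} must drop out. Were $\rho_{k+1}$ or its first partials nonzero at coincident arguments, one would inherit a nonzero boundary integral at each differentiation and the simple formulas stated in the lemma would fail. Everything else is routine bookkeeping: the $C^2$ regularity of $\rho_k$ supplied by Lemma \ref{key estimates for convergence theorem} feeds the $C^1$ hypothesis required in each invocation of Lemma \ref{derivation F(t,x)}, and the derivative bounds of orders up to two guarantee the absolute and locally uniform convergence of the series representing $F$, $F'$, $F''$ and $F'''$.
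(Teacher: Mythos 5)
Your proposal is correct and follows essentially the same route as the paper: termwise differentiation of the Fredholm series via Lemma \ref{derivation F(t,x)}, with the boundary terms killed by the vanishing property \eqref{partial rho equal 0} and the interchange of sum and derivative justified by the uniform bounds \eqref{main bound for K}. The only cosmetic difference is that the paper differentiates each term $F_k$ three times and then sums, rather than reindexing at each stage, which amounts to the same computation.
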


\begin{proof}
For $k \geq 1$, we define $F_k$ by
\begin{equation*}
F_k(t) =  \frac{(-1)^k}{k!} \int_{(t, \infty)^k} \rho_k(x_1,\ldots,x_k) dx_1\ldots dx_k.
\end{equation*}
The integral is finite because of the bounds given in Lemma \ref{key estimates for convergence theorem}. By the same bounds, one can apply Lemma \ref{derivation F(t,x)} three times, to obtain that $F_k$ is in $C^3$, with the derivatives given by
\begin{align*}
F'_k(t) &= \frac{(-1)^{k-1}}{(k-1)!} \int_{(t, \infty)^{k-1}} \rho_{k}(t,x_1,\ldots,x_{k-1}) dx_1\ldots dx_{k-1},\\
F''_k(t) &= \frac{(-1)^{k-1}}{(k-1)!} \int_{(t, \infty)^{k-1}} \frac{\partial}{\partial t}\rho_{k}(t,x_1,\ldots,x_{k-1}) dx_1\ldots dx_{k-1},\\
F'''_k(t) &= \frac{(-1)^{k-1}}{(k-1)!} \int_{(t, \infty)^{k-1}} \frac{\partial^2}{\partial t^2}\rho_{k}(t,x_1,\ldots,x_{k-1}) dx_1\ldots dx_{k-1},
\end{align*}
where again all the integrals are absolutely convergent by Lemma \ref{key estimates for convergence theorem}. Note that we use \eqref{partial rho equal 0} to calculate the derivatives above. Moreover, for $p \in \{0,1,2,3\}$, \eqref{main bound for K} gives the following bound for any $x_0>0$:
\begin{equation*}
\underset{t \geq x_0}{\sup} \, |F^{(p)}_k(t)| \leq \frac{(C(x_0))^k}{(k-1)!}.
\end{equation*}
Using dominated convergence, we have that the sum
\begin{equation*}
\sum_{k \geq 1} F_k(t)
\end{equation*}
is absolutely convergent, and that its $p$-th derivative, $p\in\{0,1,2,3\}$ with respect to $t$ is continuous
and given by the absolutely convergent sum
\begin{equation*}
\sum_{k \geq 1} F^{(p)}_{k} (t).
\end{equation*}
\end{proof}

\subsection{$\theta_\infty$ is well defined}\label{Rubin3}In order to prove that $\theta_\infty$ is well defined,
we need to prove that $F_{\infty}(t)$ never vanishes for $t > 0$ (recall from Remark \ref{t must be bigger than 0} that the range of the largest eigenvalue is $\mathds{R}_+^*$). We
note that $F_{\infty}(t)$ is the Fredholm determinant of the
restriction of the operator $K_{\infty}$ to the space
$L^2((t,\infty))$,
which can also be seen as the operator on  $L^2((t_0,\infty))$
with kernel $(x,y) \rightarrow
K_{\infty}(x,y) \, \mathds{1}_{x,y>t}$, for some $t_0$ such that
$t > t_0>0$.  This operator is positive, and we
recall that it is a trace class operator, since:
$$\int_{(t,\infty)} K_{\infty}(x,x) \, dx < \infty.$$
Therefore, the Fredholm determinant of this operator is given by the convergent product
of $1- \lambda_j$, where $(\lambda_j)_{j \in \mathds{N}}$ is the
decreasing sequence of its (positive) eigenvalues, with multiplicity.
This implies that the determinant is zero if and only if  $1$ is an eigenvalue of
the operator: hence, we only need to prove that this is not the case.
Indeed, if $1$ is an eigenvalue, there exists $f \neq 0$ in $L^2
((t_0,\infty))$
such that for almost all $x \in (t_0, \infty)$:
$$f(x)  = \mathds{1}_{x > t} \, \int_t^{\infty}  K_{\infty}(x,y) \,
f(y) \, dy.$$
Hence $f (x) = 0$ for almost every $x \leq t$, and
$$ f = p_{(t,\infty)} K_{\infty, ( t_0,\infty)} f$$
in  $L^2((t_0,\infty))$, where $K_{\infty, ( t_0,\infty)} $ is the
operator on this space, with kernel $K_{\infty}$, and $p_{(t,\infty)}$
is the projection on the space of functions supported by $(t,
\infty)$. Now, if we denote $ g :=  K_{\infty, ( t_0,\infty)} f$,
$$  ||g||_{L^2 ((t_0,\infty))}^2
= \int_{t_0}^{\infty}  \int_{t_0}^{\infty} \int_{t_0}^{\infty}  K_{\infty}(x,y) \,
K_{\infty}(x,z) \, f(y) \, f(z) \, dx \, dy \, dz.$$
By dominated convergence, one can check that $||g||_{L^2 ((t_0,\infty))}^2
$ is the limit of
$$\int_{t_0}^{\infty}  \int_{t_0}^{\infty} \int_{t_0}^{\infty}  K_{[N]}(x,y) \,
K_{[N]}(x,z) \, f(y) \, f(z) \, dx \, dy \, dz$$
when $N$ goes to infinity. This expression is equal to
$ || p_{(t_0,\infty)} K_{[N]} \tilde{f}||_{L^2(\mathds{R})}^2$, and hence,
smaller than or equal to $ ||K_{[N]} \tilde{f}||_{L^2(\mathds{R})}^2$,
where the operators $p_{(t_0,\infty)}$ and  $K_{[N]}$ act on $L^2(\mathds{R})$,
and where  $\tilde{f}$ is equal to $f$ on $(t_0, \infty)$ and equal to
zero on $ (-\infty, t_0]$. Now, $K_{[N]}$ (as $K_N$) is an orthogonal projector
on $L^2(\mathds{R})$ (with an $N$-dimensional image), hence,
$||K_{[N]} \tilde{f}||_{L^2(\mathds{R})} \leq ||\tilde{f}||_{L^2(\mathds{R})}$.
This implies:
$$ ||g||_{L^2 ((t_0,\infty))} \leq
||f||_{L^2 ((t_0,\infty))}.$$
Now, with obvious notation:
\begin{align*}
||g||_{L^2 ((t_0,\infty))}^2 & = ||p_{(t,\infty)} g||_{L^2 ((t_0,\infty))}^2 +
||p_{(t_0,t]} g||_{L^2 ((t_0,\infty))}^2  \\ & = ||f||_{L^2 ((t_0,\infty))}^2 +
||p_{(t_0,t]} g||_{L^2 ((t_0,\infty))}^2
\end{align*}
since $ f = p_{(t,\infty)} g$. By comparing the last two equations, one deduces that
$$||p_{(t_0,t]} g||_{L^2 ((t_0,\infty))}^2  = 0,$$
which implies that $g$ is supported by $(t, \infty)$, and
$$f = p_{(t,\infty)} g = g =  K_{\infty, ( t_0,\infty)} f.$$
Hence, $K_{\infty, ( t_0,\infty)} f$ (equal to $f$), takes the value zero a.e. on the interval $(t_0,t)$.
Since $f$ is different from zero, one easily deduces a contradiction from the following Lemma:
\begin{lemma} \label{analytic}
Let $f$ be a function in $L^2 ((t, \infty))$ for some $t>0$. Then the function $g$ from $\mathds{R}_+^*$ to
$\mathds{R}$, defined by:
$$g(x) = \int_{t}^ {\infty} K_{\infty} (x,y) \, f(y) \, dy$$
is analytic on $\{z\in\mathbb{C};\;\Re(z)>0\}$.
\end{lemma}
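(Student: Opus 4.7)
The plan is to reduce the analyticity of $g$ to a standard Morera argument once we have holomorphic extensions of $\tilde P$ and $Q$ off the positive real axis.

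\textbf{Step 1 (holomorphic extensions).} First I would extend $\tilde P$ and $Q$, originally defined on $\mathds{R}_+^*$, to holomorphic functions on the right half-plane $\Omega := \{z \in \mathds{C} : \Re{z} > 0\}$. On $\mathds{R}_+^*$ one has $|2/x|^{\Re{s}} = (2/x)^{\Re{s}}$ and $\textrm{Sgn}(x) = 1$, so the natural extension replaces $(2/x)^{\Re{s}}$ by the principal branch $(2/z)^{\Re{s}}$, which is holomorphic on $\Omega$ since $2/z \in \Omega$ there, and replaces the sign by $+1$. Since ${}_1F_1[h,a;w]$ is entire in $w$ (the denominators $(a)_n$ are non-zero because $a = 2\Re{s}+1$ or $2\Re{s}+2$ is positive), this produces holomorphic extensions $\tilde P, Q$ on $\Omega$ agreeing with the originals on $\mathds{R}_+^*$. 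For each fixed $y > t$ the map
\[
z \mapsto \frac{\tilde P(z)\,Q(y) - Q(z)\,\tilde P(y)}{z - y}
\]
is then holomorphic on $\Omega$: the only candidate singularity is at $z = y$, but the numerator vanishes there, so the singularity is removable. Hence $z \mapsto K_\infty(z,y)$ is holomorphic on $\Omega$ for every $y > t$.

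\textbf{Step 2 (uniform domination).} Next I would produce a dominating function. Fix a closed disk $\bar{D} \subset \Omega$ and set $R := 2\sup_{z\in \bar{D}}|z|$. On the compact set $\bar{D} \times [t,R]$, joint continuity of $K_\infty$ (including across the removable singularity $z=y$) yields a uniform bound $|K_\infty(z,y)| \leq B$. On $\bar{D} \times (R,\infty)$ one has $|z - y| \geq y/2$, and Lemma \ref{derivatives P_N Q_N} passed to the limit gives $|\tilde P(y)| \leq C y^{-\Re{s}}$, $|Q(y)| \leq C y^{-1-\Re{s}}$, while $\tilde P(z), Q(z)$ are bounded on $\bar{D}$. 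Combining these,
\[
|K_\infty(z,y)| \leq M(y) := B\,\mathds{1}_{[t,R]}(y) + C'\, y^{-1-\Re{s}}\,\mathds{1}_{(R,\infty)}(y), \qquad z \in \bar{D},
\]
and $M \in L^2((t,\infty))$ precisely because $\Re{s} > -1/2$, exactly the integrability threshold used throughout the paper. By Cauchy--Schwarz, $M\,|f| \in L^1((t,\infty))$.

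\textbf{Step 3 (Morera).} For any closed triangle $T \subset \bar{D}$, the bound from Step 2 justifies Fubini, giving
\[
\oint_T g(z)\,dz \;=\; \int_t^\infty f(y) \left(\oint_T K_\infty(z,y)\,dz\right) dy \;=\; 0,
\]
the inner contour integral vanishing by Step 1 and Cauchy's theorem. Morera's theorem then yields analyticity of $g$ on the interior of $\bar{D}$, and since $\bar{D}$ was an arbitrary closed disk in $\Omega$, $g$ is analytic on all of $\Omega$.

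The only mildly delicate point is controlling $|K_\infty(z,y)|$ when $z \in \bar{D}$ is close to some real $y > t$; but the removable singularity makes $K_\infty$ genuinely continuous across $z = y$, so the bound reduces cleanly to compactness together with the large-$y$ decay from Lemma \ref{derivatives P_N Q_N}, and nothing further is required.
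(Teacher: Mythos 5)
Your proof is correct, and its second half takes a genuinely different route from the paper's. Both arguments begin the same way: extend $\tilde{P}$ and $Q$ holomorphically to the right half-plane (principal branch for the power factor, entire ${}_1F_1$ factor), so that for each fixed $y>t$ the map $z\mapsto K_\infty(z,y)$ is holomorphic after removing the singularity at $z=y$. From there the paper proceeds by differentiating under the integral sign: it bounds not only $K_\infty(z,y)$ but also $\partial_z K_\infty(z,y)$ on the half-plane $\{\Re(z)>x_0\}$, handling the neighbourhood of the diagonal by the mean-value representation $\mathds{E}[\tilde{P}'(Z)Q(y)-Q'(Z)\tilde{P}(y)]$ and then invoking the maximum principle to remove the restriction $|x-y|\geq y/2$, and finally uses Cauchy--Schwarz and dominated convergence to identify $g'$ explicitly. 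You instead use a local Fubini--Morera argument: on a closed disk you only need a $z$-uniform dominating function $M(y)\in L^2((t,\infty))$ for the kernel itself (compactness for $y\in[t,R]$, the decay $y^{-1-\Re{s}}$ for $y>R$, integrable squared exactly because $\Re{s}>-1/2$), so that $\oint_T g=0$ follows from holomorphy of the kernel in $z$ and Cauchy's theorem. This buys you a shorter and more elementary finish — no bound on $\partial_z K_\infty$, no maximum principle, no explicit derivative formula — at the cost of not producing the formula $g'(x)=\int_t^\infty \partial_x K_\infty(x,y)f(y)\,dy$ that the paper's method yields as a by-product (which the lemma does not require anyway). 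Two small points you should make explicit if writing this up in full: Morera needs continuity of $g$ on the disk, which follows from dominated convergence with the same $M|f|$; and the joint continuity of $K_\infty(z,y)$ across $z=y$ on $\bar{D}\times[t,R]$ is justified by the same segment-average representation the paper uses (valid since the half-plane is convex, so the segment from $y$ to $z$ stays in the domain), not merely by the removability of the singularity for each fixed $y$.
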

\begin{proof}
It is sufficient to prove that for all $x_0$ such that $0<x_0<t/2$,
$g$ can be extended to a holomorphic function on the set $H_{x_0} := \{x \in \mathds{C}; \Re(x) > x_0
\}$. Let $(\epsilon, h,a)$ be equal to $(0, s, 2\Re(s)+1)$ or $(1,s+1, 2\Re(s) + 2)$, and $\Phi$
equal to $\tilde{P}$ in the first case, $Q$ in the second case. One has for $x \in \mathds{R}_+^*$:
$$\Phi(x) = \left( \frac{2}{x} \right)^{\Re(s) + \epsilon} e^{-i/x} e^{\pi \Im (s)/2}
\,_1F_1[h,a;2i/x].$$
$\Phi$ can easily be extended to $H_{x_0}$: for the first factor, one can use the standard extention of the logarithm
(defined on $\mathds{C} \backslash \mathds{R}_-$), and the last factor is a hypergeometric
series which is uniformly convergent on $H_{x_0}$. Moreover, it is easy to check (by using dominated convergence
for the hypergeometric factor), that this extension of $\Phi$ is holomorphic with derivative:
\begin{align*}
&\Phi'(x)=\\
&e^{\pi\Im{s}/2}\left(\frac{2}{x}\right)^{\Re{s}+\epsilon}e^{-i/x}\\
&\cdot\left[\frac{-(\Re{s}+\epsilon)}{x}{}_1F_1[h,a;2i/x]+\frac{i}{x^2}{}_1F_1[h,a;2i/x]\right.\\
&\left.-\sum_{k=0}^\infty\frac{(h)_k(2i)^kk}{(a)_kk!}\left(\frac{1}{x}\right)^{k+1}\right].
\end{align*}
With these formulae, one deduces the following bounds, available on the whole set $H_{x_0}$:
$$|\Phi(x)| \leq \frac{C(x_0,s)}{|x|^{\Re (s) + \epsilon}},$$
$$|\Phi'(x)| \leq \frac{C(x_0,s)}{|x|^{\Re(s) + \epsilon + 1}}.$$
Now, let us fix $y \in (t, \infty)$. Recall that for $x \in \mathds{R}_+^* \backslash \{y\}$:
\begin{equation} \label{www}
K_{\infty} (x,y) = \frac{1}{2\pi} \frac{\Gamma(s+1) \Gamma(\overline{s} + 1) }
{\Gamma(2 \Re{s} + 1) \Gamma(2 \Re{s} + 2)} \frac{\tilde{P} (x) Q(y) - Q(x) \tilde{P} (y) }{x-y}.
\end{equation}
This formula is meaningful for all $x \in H_{x_0} \backslash \{y\}$ and gives an analytic
continuation of $x \mapsto K_{\infty}(x,y)$ to this set. Now, for $x>x_0$, one also has the formula:
\begin{equation*}
K_{\infty} (x,y) = \frac{1}{2\pi} \frac{\Gamma(s+1) \Gamma(\overline{s} + 1) }
{\Gamma(2 \Re{s} + 1) \Gamma(2 \Re{s} + 2)} \mathds{E} \left[
\tilde{P}' (Z) Q(y) - Q'(Z) \tilde{P} (y) \right],
\end{equation*}
where $Z$ is a uniform random variable on the segment $[x,y]$. By the bounds obtained for
$\Phi$ and $\Phi'$, one deduces that the continuation of $x \mapsto K_{\infty}(x,y)$
to the set $H_{x_0} \backslash \{y\}$ is bounded in the neighborhood of $y$, and hence can
be extended to $H_{x_0}$. By construction, this extension coincides with $K_{\infty} (x,y)$ for
$x \in (x_0, \infty) \backslash \{y\}$, and in fact it coincides on the whole interval
$(x_0, \infty)$, since $K_{\infty} (x,y)$ tends to $K_{\infty} (y,y)$ when $x$ is real and tends to $y$.
In other words, we have constructed an extension of $x \mapsto K_{\infty} (x,y)$ which is holomorphic
on $H_{x_0}$. Now, let us take $x \in H_{x_0}$ such that $|x-y| \geq y/2$, which implies that
$|x-y| \geq C (|x|+y)$ for a universal constant $C$. By using this inequality and the bounds on
$\tilde{P}$ and $Q$, one obtains:
$$|K_{\infty} (x,y)| \leq \frac{C(s,x_0)}{|xy|^{\Re(s) + 1}}.$$
By taking the derivative of the equation \eqref{www}, one obtains the bound (again for
$x \in H_{x_0}$ and $|x-y| \geq y/2$):
$$\left| \frac{\partial}{\partial x} K_{\infty} (x,y) \right| \leq \frac{C(s,x_0)}{|x|^{\Re(s) + 2} y^{\Re(s)+1} }.$$
Now, the maximum principle implies that the condition $|x-y| \geq y/2$ can be removed
in the  last two bounds. By using these bounds, Cauchy-Schwarz inequality and dominated
convergence, one deduces that the function:
$$x \mapsto \int_{t}^{\infty} K_{\infty}(x,y) \, f(y) \, dy$$
is well defined on the set $H_{x_0}$, and admits a derivative, given by the formula:
$$x \mapsto \int_{t}^{\infty} \left( \frac{\partial}{\partial x} K_{\infty} (x,y)  \right) \, f(y) \, dy.$$
\end{proof}

\subsection{Proof of Theorem \ref{Fredholm2}}

Note that by Proposition \ref{K_N and K_infinity satisfy the crucial conditions}, $K_{[N]}$ and $K_\infty$ satisfy the conditions of Lemma \ref{key estimates for convergence theorem}. For $k, N \in \mathds{N}$,  let $\rho_{k,N}$ be the $k$-correlation function associated with $K_{[N]}$ and $\rho_{k, \infty}$ the $k$-correlation function associated with $K_\infty$. By Lemma \ref{Fredholm}, $F_N$ is well defined for $N \in \mathds{N} \cup\{ \infty \}$, and $C^3$. The explicit expressions of $F_N$ and $F_\infty$ and their derivatives are given in Lemma \ref{Fredholm} by replacing $\rho_k$ by $\rho_{k,N}$ and $\rho_{k,\infty}$ respectively. Now, for $k \geq 1$, all the partial derivatives of any order of $\rho_{k,N}$ converge pointwise to the corresponding derivatives of $\rho_{k, \infty}$ when $N$ goes to infinity. This is due to the explicit expression of $\rho_{k,N}$ as a determinant and the convergence given by Proposition \ref{K_N and K_infinity satisfy the crucial conditions}. Moreover, by that same Proposition, there exists $\alpha>1/2$ only depending on $s$ such that
\begin{equation*}
\left|\frac{\partial^{p}}{\partial x_1^p} \rho_{k,N} (x_1,\ldots,x_k)\right| \leq \frac{C(x_0,s)^k}{(x_1\ldots x_k)^{2 \alpha}},
\end{equation*}
for $p \in \{0,1,2\}$, and for all $x_1,...,x_k \geq x_0>0$. In particular, this bound is uniform with respect to $N$, and it is now easy to deduce the pointwise convergence of the derivatives of $F_N$ (up to order $3$), by dominated convergence.

\subsection{Proof of Theorem \ref{convergence of the N-solution to the infinity-solution}}
Theorem  \ref{convergence of the N-solution to the infinity-solution} follows immediately from Proposition \ref{convergence of the PVI to the PV equation} and the following Proposition:
\begin{proposition} \label{theta}
Let $s$ be such that $\Re{s} > -1/2$, and $F_N$, $N \in \mathds{N}$, and $F_{\infty}$ be as in Theorem \ref{Fredholm2}. Then, for $N \in \mathds{N} \cup\{ \infty\}$, the function $\theta_N$ from $\mathds{R}_+^*$ to $\mathds{R}$, defined by
\begin{equation*}
\theta_N(\tau) = \tau \frac{d}{d \tau} \log( F_N(\tau^{-1})),
\end{equation*}
is well defined and $C^2$. Moreover, for $p \in \{0,1,2\}$, the derivatives $\theta^{(p)}_N$ converge pointwise to $\theta^{(p)}_{\infty}$ (defined by (\ref{theta_infty})).
\end{proposition}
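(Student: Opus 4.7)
The plan is to reduce the claim to the convergence statement of Theorem \ref{Fredholm2} by writing $\theta_N$ explicitly as a rational function of $F_N$ and its derivatives, with a denominator that stays bounded away from zero. Setting $g_N(\tau) := F_N(\tau^{-1})$ and differentiating, one obtains
\begin{equation*}
\theta_N(\tau) = \tau \, \frac{g_N'(\tau)}{g_N(\tau)} = - \frac{1}{\tau} \cdot \frac{F_N'(\tau^{-1})}{F_N(\tau^{-1})},
\end{equation*}
and iterating this computation one gets $\theta_N'(\tau)$ and $\theta_N''(\tau)$ as explicit rational expressions in the quantities $F_N^{(k)}(\tau^{-1})$ for $k=0,1,2,3$, where the denominator is a power of $F_N(\tau^{-1})$ and the numerator is polynomial in the $F_N^{(k)}(\tau^{-1})$ with coefficients that are rational functions of $\tau$.

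First I would establish that $\theta_N$ and $\theta_\infty$ are well defined. For $N$ finite, $F_N(t)$ is the probability that all eigenvalues of the rescaled matrix are at most $t$; by the argument given right after \eqref{no ev in 0-infty} (applied to the rescaled kernel $K_{[N]}$), this probability is strictly positive for every $t>0$, so $F_N(\tau^{-1})>0$ for every $\tau>0$. For $N=\infty$, the strict positivity of $F_\infty(t)$ on $\mathds{R}_+^*$ is exactly the content of Section \ref{Rubin3}. In particular both denominators in the expression above are nonzero.

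Next I would handle regularity. By Lemma \ref{Fredholm}, $F_N$ and $F_\infty$ are in $C^3(\mathds{R}_+^*,\mathds{R})$, hence $\tau \mapsto \log F_N(\tau^{-1})$ and $\tau \mapsto \log F_\infty(\tau^{-1})$ are in $C^3(\mathds{R}_+^*,\mathds{R})$ on account of the positivity just established; differentiating once and multiplying by $\tau$ therefore yields $\theta_N,\theta_\infty \in C^2(\mathds{R}_+^*,\mathds{R})$. Finally, for the pointwise convergence of $\theta_N^{(p)}$ to $\theta_\infty^{(p)}$ for $p\in\{0,1,2\}$, I would invoke Theorem \ref{Fredholm2}: for every fixed $\tau>0$ and every $k\in\{0,1,2,3\}$ we have $F_N^{(k)}(\tau^{-1}) \to F_\infty^{(k)}(\tau^{-1})$ as $N\to\infty$, and in particular $F_N(\tau^{-1}) \to F_\infty(\tau^{-1}) > 0$. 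Since $\theta_N^{(p)}(\tau)$ is a continuous rational function of the vector $(F_N(\tau^{-1}),F_N'(\tau^{-1}),F_N''(\tau^{-1}),F_N'''(\tau^{-1}))$ at the limiting point (the denominator being a power of $F_\infty(\tau^{-1})>0$), the pointwise convergence $\theta_N^{(p)}(\tau) \to \theta_\infty^{(p)}(\tau)$ follows directly.

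The main (and essentially only) technical obstacle in this scheme is keeping the denominator of these rational expressions under control as $N\to\infty$, which amounts to knowing that $F_\infty(\tau^{-1})\neq 0$. Since that fact has already been secured in Section \ref{Rubin3}, the rest of the proof reduces to the bookkeeping of chain rule, quotient rule, and an application of Theorem \ref{Fredholm2}; no additional uniform estimates or dominated-convergence arguments are needed beyond those already built into that theorem.
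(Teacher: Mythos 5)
Your proposal is correct and follows essentially the same route as the paper: well-definedness via the strict positivity of $F_N(\tau^{-1})$ (probabilistic interpretation for finite $N$, Subsection \ref{Rubin3} for $N=\infty$), $C^2$ regularity from the $C^3$ regularity of $F_N$ in Lemma \ref{Fredholm}, and pointwise convergence of $\theta_N^{(p)}$ by viewing it as a rational expression in $F_N^{(k)}(\tau^{-1})$, $k\leq 3$, with nonvanishing limit denominator, combined with Theorem \ref{Fredholm2}.
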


\begin{proof}
Recall that for $t > 0$, $F_N(t)$ is the probability that a random matrix of dimension $N$, following the generalized Cauchy weight \eqref{Full Hua-Pickrell}, has no eigenvalue in $(Nt, \infty)$. Therefore, $F_N(t)>0$, for any $t>0$. Similarly, $F_{\infty}(t)$ is the probability that the limiting determinantal process has no point in $(t, \infty)$, which  is also  different from zero for any $t>0$, as we proved in Subsection \ref{Rubin3}. Therefore, for all $N \in \mathds{N} \cup\{ \infty\}$, $\theta_N$ is well-defined and
\begin{equation*}
\theta_N(\tau) = - \frac{F'_N(\tau^{-1})}{\tau F_N(\tau^{-1})}.
\end{equation*}
Since $F_N$ is in $C^3$, $\theta_N$ is in $C^2$, for all $N\in\mathds{N}\cup\{\infty\}$, and one can give explicit expressions for $\theta_N$ and for its first two derivatives (see Lemma \ref{Fredholm}). It is now easy to deduce from these explicit expressions and the pointwise convergence of the first three derivatives of $F_N$ assured by Theorem \ref{Fredholm2}, the pointwise convergence for the first two derivatives of $\theta_N$, when $N \in \mathds{N}$ goes to infinity.
\end{proof}

\begin{remark}\label{infinity solution via Tracy-Widom}
Note that most probably, it is also possible to derive the fact that the kernel $K_\infty$ gives rise to a solution of the Painlev?-V equation \eqref{Painleve V for theta} directly by the methods of Tracy and Widom \cite{Tracy-Widom} in an analogous way then the one used to obtain the Painlev?-VI equation \eqref{ODE for sigma 2} in the finite $N$ case. In fact, the recurrence equations \eqref{General recurrence equation for phi and psi} in the infinite case are:
\begin{align*}
x^2P'(x)&=\left(-x\Re{s}+\frac{\Im{s}}{\Re{s}}\right)P(x)+\frac{|s|^2}{\Re{s}^2}\frac{1}{2\Re{s}+1}Q(x),\\
x^2Q'(x)&=-\left(2\Re{s}+1\right)P(x)-\left(-x\Re{s}+\frac{\Im{s}}{\Re{s}}\right)Q(x),
\end{align*}
where $P$ and $Q$ are as in the definition of $K_\infty$ in \eqref{definition of K infinity} and \eqref{definition of K infinity for x=x}. However, this method will has several drawbacks, as already mentioned in the introduction.
\end{remark}

\section{The convergence rate: proof of Theorem \ref{rate of convergence for large x}}\label{Rate of Convergence for the Distribution of the Largest Eigenvalue}

We first need the rate of convergence for the scaled kernel $K_{[N]}(x,y)=NK_N(Nx,Ny)$:

\begin{lemma}\label{rate of convergence of K_N}
Let $x,y>x_0>0$. Then there exists a constant $C(x_0,s)>0$ only depending on $x_0$ and $s\in\mathds{C}$ ($\Re{s}>-1/2$), such that
\begin{equation*}
\left|K_{[N]}(x,y)-K_\infty(x,y)\right|\leq\frac{1}{N}\frac{C(x_0,s)}{(xy)^{\Re{s}+1}}.
\end{equation*}
\end{lemma}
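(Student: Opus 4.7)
My plan is to refine the pointwise convergence arguments in Lemma \ref{derivatives F_{n,h,a}} and Lemma \ref{derivatives P_N Q_N} into quantitative estimates with explicit $O(1/N)$ rates, and then combine them with a case split on whether $x$ and $y$ are far apart or close together.

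First I would upgrade Lemma \ref{derivatives F_{n,h,a}}. Its proof already writes $F_{n,h,a}(Nx) - {}_1F_1[h,a;2i/x]$ as a series whose $k$-th term has the form $(h)_k (k)_p/((a)_k k!)$ multiplied by $(2/(1+iNx))^{k+p}(-iN/2)^p - (2i/x)^k((-1)^p)/x^p$. By factoring out $(2i/Nx)^k$ and comparing $(1+iNx)^{-k}$ with $(Nx)^{-k} \cdot i^{-k}$, each term differs by a factor of order $1/(Nx)$ uniformly in $k$ (with geometric decay in $k$ coming from $(|h|)_k/(a)_k$ and the factor $(2/x_0)^k$). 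Summing yields, for $|x|\geq x_0$ and $p\in\{0,1,2,3\}$,
\begin{equation*}
\bigl|\tfrac{d^p}{dx^p}F_{n,h,a}(Nx) - \tfrac{d^p}{dx^p}{}_1F_1[h,a;2i/x]\bigr| \leq \frac{C(x_0,s,p)}{N\,|x|^{p+1+\mathds{1}_{p>0}}}.
\end{equation*}

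Next I would revisit Lemma \ref{derivatives P_N Q_N}. Its proof factorizes $\Phi_N$ as $D'(N,s)\cdot(\text{Sgn }x)^N\cdot e^{\pi\Im s\,\text{Sgn}(x)/2}\cdot(2/x)^{N-n}(2/|x|)^{\Re s}\cdot G(1/x)\cdot F_{n,h,a}(Nx)$. Three things must acquire an explicit $1/N$ rate: the prefactor $D'(N,s)$ differs from $1$ by $O(1/N)$ thanks to Stirling's expansion applied to $\Gamma(2\Re s+N+1)/(N^{2\Re s+1}\Gamma(N))$; the function $G(y) = (1-iy/N)^{(N-s)/2-(N-n)}(1+iy/N)^{(-\bar s - N)/2}$ converges to $e^{-iy}$ with rate $1/N$, which one sees by Taylor expanding $\log G(y)$ in powers of $1/N$; and the hypergeometric factor has the rate just established. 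Collecting these and differentiating $p$ times, I would obtain, for $|x|\geq x_0$ and $p\in\{0,1,2,3\}$,
\begin{equation*}
\bigl|(\text{Sgn }x)^N\tilde P_N^{(p)}(x) - \tilde P^{(p)}(x)\bigr| \leq \frac{C(x_0,s,p)}{N\,|x|^{\Re s + p}}, \qquad \bigl|(\text{Sgn }x)^N Q_N^{(p)}(x) - Q^{(p)}(x)\bigr| \leq \frac{C(x_0,s,p)}{N\,|x|^{\Re s + 1 + p}}.
\end{equation*}

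Finally, to pass from these to a bound on $K_{[N]}(x,y) - K_\infty(x,y)$ for $x,y\geq x_0>0$, I would split into two cases using the dichotomy introduced in the proof of Proposition \ref{convergence of K}. If $\max(x,y)\geq 2\min(x,y)$, then $|x-y|\geq\max(x,y)/2$, and I would use the explicit formula (a) of Lemma \ref{fg} with $p=q=0$, writing $\tilde P_N(x)Q_N(y) - Q_N(x)\tilde P_N(y) - [\tilde P(x)Q(y)-Q(x)\tilde P(y)]$ as a telescoping sum of four products in which one factor is a difference bounded by the estimates above and the other is bounded by Lemma \ref{derivatives P_N Q_N}. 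Dividing by $|x-y|\geq\max(x,y)/2\geq\tfrac12\sqrt{xy}$ produces the factor $(xy)^{-\Re s -1}$. If instead $\max(x,y)<2\min(x,y)$, the denominator $x-y$ is dangerous, so I would use the integral representation (b) of Lemma \ref{fg} for both $K_{[N]}$ and $K_\infty$ and estimate the difference of integrands pointwise on $[y, x]$ using the rates on $\tilde P'_N - \tilde P'$ and $Q'_N - Q'$; since $y+\theta(x-y)$ stays comparable to $\sqrt{xy}$ on this segment, the desired bound follows.

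The main obstacle is the region where $x$ and $y$ are close: the naive $1/(x-y)$ estimate explodes, and one must pass through the mean-value (integral) form (b) of Lemma \ref{fg}; this forces me to carry the $1/N$ rates all the way to the first derivatives of $\tilde P_N$ and $Q_N$, which is why I upgrade Lemmas \ref{derivatives F_{n,h,a}} and \ref{derivatives P_N Q_N} for $p\geq 1$ rather than just for $p=0$.
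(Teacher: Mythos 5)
Your proposal follows essentially the same route as the paper's proof: the same dichotomy $\max(x,y)\geq 2\min(x,y)$ versus $\max(x,y)<2\min(x,y)$ (difference-quotient telescoping in the far case, the mean-value/integral form of Lemma \ref{fg} in the close case), and the same quantitative upgrade of $\tilde P_N,Q_N$ via separate $O(1/N)$ rates for $D'(N,s)$ (Stirling), the factor $(1-i/Nx)^{\cdot}(1+i/Nx)^{\cdot}\to e^{-i/x}$ (log expansion), and the hypergeometric factor, carried up to first derivatives. One minor caveat: in the hypergeometric comparison the $k$-th terms differ by roughly $C\,k^2/(N x_0^k)$, not by a factor $O(1/(Nx))$ uniformly in $k$, but this polynomial growth in $k$ is harmless against the $1/k!$ decay, so your conclusion is unaffected.
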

In the following proof, $C(a,b,\ldots)$ denotes a strictly positive constant only depending on $a,b,\ldots$ which may change from line to line.
\begin{proof}
Let $x,y>x_0$, $x\neq y$. Then, setting $C(s)=\left|\frac{1}{2\pi}\frac{\Gamma(s+1)\Gamma(\overline{s}+1)}{\Gamma(2\Re{s}+1)\Gamma(2\Re{s}+2)}\right|$, and using the notations from Lemma \ref{derivatives P_N Q_N}, we have
\begin{align}
&\left|K_{[N]}(x,y)-K_\infty(x,y)\right|=  \label{machin} \\
&C(s)\left|\frac{1}{x-y}\right|\left|\tilde{P}_N(x)Q_N(y)-\tilde{P}_N(y)Q_N(x)-(\tilde{P}(x)Q(y)-\tilde{P}(y)Q(x))\right| \nonumber \\
\leq& C(s)\left|\frac{1}{x-y}\right|\left\{\left|\tilde{P}_N(x)Q_N(y)-\tilde{P}(x)Q(y)\right|+\left|\tilde{P}_N(y)Q_N(x)-\tilde{P}(y)Q(x)\right|\right\} \nonumber \\
\leq& C(s)\left|\frac{1}{x-y}\right|\left\{\left|\tilde{P}_N(x)-\tilde{P}(x)\right|\left|Q_N(y)\right|+\left|Q_N(y)-Q(y)\right|\left|\tilde{P}(x)\right|\right. \nonumber \\
&\left.+\left|\tilde{P}_N(y)-\tilde{P}(y)\right|\left|Q_N(x)\right|+\left|Q_N(x)-Q(x)\right|\left|\tilde{P}(y)\right|\right\}.
\nonumber
\end{align}
Similarly, if $x,y>x_0$, it is easy to check (by using the fundamental Theorem of calculus) that
\begin{align}
&\left|K_{[N]}(x,y)-K_\infty(x,y)\right|\leq C(s) \mathds{E} \left[ \left|\tilde{P}'_N(Z)-\tilde{P}'(Z)\right|\left|Q_N(x)\right| \right. \label{truc} \\
&\left. +\left|Q_N(x)-Q(x)\right|\left|\tilde{P}'(Z)\right|
+\left|\tilde{P}_N(x)-\tilde{P}(x)\right|\left|Q'_N(Z)\right|+\left|Q'_N(Z)-Q'(Z)\right|\left|\tilde{P}(x)\right|\right]. \nonumber
\end{align}
where $Z$ is a uniform random variable in the interval $[x,y]$. \\
By using \eqref{machin} if $\max(x,y) \geq 2 \min(x,y)$ and \eqref{truc}
if $\max(x,y) < 2 \min(x,y)$, one deduces that the Lemma is proved, if we show that for $p\in\{0,1\}$,
\begin{equation}\label{P converges like 1/N}
\left|\tilde{P}_N^{(p)}(x)-\tilde{P}^{(p)}(x)\right|\leq\frac{1}{N}\frac{C(x_0,s,p)}{x^{p+\Re{s}}},
\end{equation}
and
\begin{equation}\label{Q converges like 1/N}
\left|Q_N^{(p)}(x)-
Q^{(p)}(x)\right|\leq\frac{1}{N}\frac{C(x_0,s,p)}{x^{p+1+\Re{s}}},
\end{equation}
Recall from \eqref{Phi written out}, the following function (note that $x>x_0>0$):
\begin{align*}
\Phi_N(x)&=D'(N,s)e^{\pi\Im{s}/2}\left(\frac{2}{x}\right)^{N-n}\left(\frac{2}{x}\right)^{\Re{s}}\\
\cdot&\left(1-\frac{i}{Nx}\right)^{(N-s)/2-(N-n)}\left(1+\frac{i}{Nx}\right)^{-(\overline{s}+N)/2}F_{n,h,a}(Nx),
\end{align*}
and let us define similarly:
\begin{equation*}
\Phi(x)=e^{\pi\Im{s}/2}\left(\frac{2}{x}\right)^{N-n}\left(\frac{2}{x}\right)^{\Re{s}}e^{-i/x}{}_1F_1\left[h,a;2i/x\right],
\end{equation*}
where $(n,h,a)=(N,s,2\Re{s}+1)$ and $\Phi_N(x)=\tilde{P}_N(x)$, or $(n,h,a)=(N-1,s+1,2\Re{s}+2)$ and $\Phi_N(x)=Q_N(x)$, for $N\in\mathds{N}^*$ (recall that $N-n=0$ in the first case and $N-n=1$ in the second case). It suffices to show that for $p\in\{0,1\}$, $|\Phi_N^{(p)}(x)-\Phi^{(p)}(x)|\leq\frac{C(x_0,s,p)}{Nx^{\Re(s)+1+p}}$ to deduce \eqref{P converges like 1/N} and \eqref{Q converges like 1/N}. Let us first investigate the case $p=0$:
\begin{align}\label{general domination for Phi}
&\left|\Phi_N(x)-\Phi(x)\right|\leq e^{\pi\Im{s}/2}\left(\frac{2}{x}\right)^{\Re{s}+(N-n)}\\
&\cdot\left\{\left|D'(N,s)-1\right|\left|\left(1-i/(Nx)\right)^{(N-s)/2-(N-n)}\left(1+i/(Nx)\right)^{-(N+\overline{s})/2}F_{n,h,a}(Nx)\right|\right.\nonumber\\
&+\left|\left(1-i/(Nx)\right)^{(N-s)/2-(N-n)}\left(1+i/(Nx)\right)^{-(N+\overline{s})/2}-e^{-i/x}\right|\left|F_{n,h,a}(Nx)\right|\nonumber\\
&\left.+\left|e^{-i/x}\right|\left|F_{n,h,a}(Nx)-{}_1F_1\left[h,a;2i/x\right]\right|\right\}.\nonumber
\end{align}
We show that the bracket $\{.\}$ is bounded uniformly by $\frac{1}{N}C(x_0,s)$. In the following, we look at the three summands in the bracket separately. For the first one, we have by \eqref{bound on convergence of G} and \eqref{bound on convergence of F} that
\begin{equation*}
\left|\left(1-i/(Nx)\right)^{(N-s)/2-(N-n)}\left(1+i/(Nx)\right)^{-(N+\overline{s})/2}F_{n,h,a}(Nx)\right|\leq C(x_0,s).
\end{equation*}
Moreover, it is easy to check (for example, by using Stirling formula) that
\begin{equation*}
\left| \frac{\Gamma(2\Re{s}+N+1)}{N^{2\Re{s}+1}\Gamma(N)} - 1 \right| \leq \frac{1}{N} C(s).
\end{equation*}
Now, if some sequence $a_N>0$ converges to $a>0$ in the order $1/N$ as $N\rightarrow\infty$, $\sqrt{a_N}\rightarrow\sqrt{a}$, in the order $1/N$ as well, for $N\rightarrow\infty$. Hence,
\begin{equation*}
\left|D'(N,s)-1\right|=\left|\left(\frac{\Gamma(2\Re{s}+N+1)}{N^{2\Re{s}+1}\Gamma(N)}\right)^{1/2}-1\right|\leq\frac{1}{N}C(s).
\end{equation*}
Thus, the first term in the bracket $\{.\}$ of \eqref{general domination for Phi} is  bounded by $C(x_0,s)/N$.
Let us look at the second term:
\begin{equation*}
\left|F_{n,h,a}(Nx)\right|\leq C(x_0,s),
\end{equation*}
again according to \eqref{bound on convergence of F}. Moreover,
\begin{align}\label{bound for the second term in the famous bracket}
&\left|(1-i/(Nx))^{(N-s)/2-(N-n)}(1+i/Nx)^{-(N+\overline{s})/2}-e^{-i/x}\right|\\
&\leq \left|(1-i/(Nx))^{(N-s)/2}(1+i/(Nx))^{-(N+\overline{s})/2}-e^{-i/x}\right|\left|(1-i/(Nx))^{-(N-n)}\right|\nonumber\\
&+\left|e^{-i/x}\right|\left|(1-i/(Nx))^{-(N-n)}-1\right|.\nonumber
\end{align}
It is clear, that the second term in the sum is bounded by $C(x_0)/N$. For the first term, the second factor is bounded by $C(x_0)$, whereas for the first factor, we have the following:
\begin{align}\label{second term bound}
&\left|\left(\frac{1-i/(Nx)}{1+i/(Nx)}\right)^{N/2}\left(\frac{1-i/(Nx)}{1+i/(Nx)}\right)^{-i\Im{s}/2}\left(1+1/(Nx)^2\right)^{-\Re{s}/2}-e^{-i/x}\right|\\
\leq& \left|\left(\frac{1-i/(Nx)}{1+i/(Nx)}\right)^{N/2}-e^{-i/x}\right|\left|\left(\frac{1-i/(Nx)}{1+i/(Nx)}\right)^{-i\Im{s}/2}\right|\left|\left(1+1/(Nx)^2\right)^{-\Re{s}/2}\right|\nonumber\\
+&\left|e^{-i/x}\right|\left|\left(\frac{1-i/(Nx)}{1+i/(Nx)}\right)^{-i\Im{s}/2}-1\right|\left|\left(1+1/(Nx)^2\right)^{-\Re{s}/2}\right|\nonumber\\
+&\left|e^{-i/x}\right|\left|\left(1+1/(Nx)^2\right)^{-\Re{s}/2}-1\right|.\nonumber
\end{align}
We investigate all terms in this sum separately: $|(1+1/(Nx)^2)^{-\Re{s}/2}-1|$ can be bounded by $C(x_0,s)/N$ using binomial series, and
\begin{equation*}
\left|\left(\frac{1-i/(Nx)}{1+i/(Nx)}\right)^{-i\Im{s}/2}\right|=\left|\exp\{-\Im{s}\textrm{Arg}(1+i/Nx)\}\right|\leq C(x_0,s).
\end{equation*}
Furthermore,
\begin{align*}
&\left|\left(\frac{1-i/(Nx)}{1+i/(Nx)}\right)^{-i\Im{s}/2}-1\right|=\left|\exp\{-\Im{s}\textrm{Arg}(1+i/(Nx))\}-1\right|\\
=&\left|\exp\{-\Im{s}\textrm{Arctan}(1/(Nx))\}-1\right|\leq\left|\sum_{k=0}^\infty\frac{\left(-\Im{s}\sum_{n=0}^\infty\frac{(-1)^n}{2n+1}\left(1/(Nx)\right)^{2n+1}\right)^k}{k!}-1\right|\\
\leq&\frac{1}{N}C(x_0,s).
\end{align*}
Here, we use the fact that the Taylor series for the arctangent is absolutely convergent if $0<1/(Nx)<1$, which is true for $N$ large enough. Now, by considering the series of the complex logarithm of $1\pm i/(Nx)$ (absolutely convergent for $N$ large enough), one can show that
\begin{equation*}
\left|\left(1\pm i/(Nx)\right)^{\mp N/2}-e^{-i/(2x)}\right|\leq\frac{1}{N}C(x_0).
\end{equation*}
The remaining terms in the sum \eqref{second term bound} are clearly bounded by $C(x_0,s)$ and hence, the second term in the sum \eqref{general domination for Phi} converges to zero in the order $1/N$.

We investigate the third term in \eqref{general domination for Phi}: Clearly, $\left|e^{-i/x}\right|=1$. The second factor in the third term requires somewhat more work:
\begin{align*}
&\left|F_{n,h,a}(Nx)-{}_1F_1[h,a;2i/x]\right|\\
=&\left|\sum_{k=0}^\infty\frac{(-n)_k(h)_k2^k}{(a)_kk!}\left(\frac{1}{1+iNx}\right)^k-\sum_{k=0}^\infty\frac{(h)_k(2i)^k}{(a)_kk!}\left(\frac{1}{x}\right)^k\right|\\
\leq&\sum_{k=1}^\infty\frac{(|h|)_k2^k}{(a)_kk!}\left|(-n)_k\left(\frac{1}{i-Nx}\right)^k-\left(\frac{1}{x}\right)^k\right|,
\end{align*}
where the last inequality is true because of the absolute convergence of both sums. Now,
\begin{align*}
& \left| (-n)_k \left(\frac{1}{i-Nx} \right)^k - \left( \frac{1}{x} \right)^k \right| \\
\leq &  \frac{1}{x_0^k} \, \left| 1 - \frac{(-n)_k}{((i/x)-N)^k} \right|\\
= & \frac{1}{x_0^k} \, \left| 1 - \prod_{l=N-n}^{N-n+k-1} \, \frac{l-N}{(i/x) - N} \right|\\
= & \frac{1}{x_0^k} \, \left| 1 - \prod_{l=N-n}^{N-n+k-1} \, \frac{(N-l)_+}{N - (i/x)} \right|.
\end{align*}
Since all the factors in the last product have a module smaller than 1, it is possible to
deduce:
\begin{align*}
& \left| (-n)_k \left(\frac{1}{i-Nx} \right)^k - \left( \frac{1}{x} \right)^k \right| \\
\leq &  \frac{1}{x_0^k} \, \sum_{l=N-n}^{N-n+k-1} \, \left| 1 - \frac{(N-l)_+}{N-(i/x)} \right|\\
\leq &  \frac{1}{x_0^k}  \, \sum_{l=N-n}^{N-n+k-1} \, \frac{l + 1/x }{N} \\
\leq & \frac{1}{x_0^k} \, \frac{k^2 + k/x_0}{N}.
\end{align*}
%
This bound implies easily that:
$$\left|F_{n,h,a}(Nx)-{}_1F_1[h,a;2i/x]\right| \leq \frac{C(s,x_0)}{N},$$ and we can deduce:
\begin{equation*}
|\Phi_N(x)-\Phi(x)|\leq\frac{1}{N}\frac{C(x_0,s)}{x^{\Re{s}+(N-n)}}.
\end{equation*}
Therefore, \eqref{P converges like 1/N} and \eqref{Q converges like 1/N} are proved for $p=0$.

It remains to prove that
\begin{equation*}
|\Phi_N'(x)-\Phi'(x)|\leq\frac{1}{N}\frac{C(x_0,s)}{x^{\Re{s}+(N-n)+1}},
\end{equation*}
to show \eqref{P converges like 1/N} and \eqref{Q converges like 1/N} for $p=1$. But this is immediate using the same methods as above and the fact that we can write
\begin{align*}
&\Phi_N'(x)=\\
&D'(N,s)e^{\pi\Im{s}/2}\left(\frac{2}{x}\right)^{\Re{s}+(N-n)}\left(1-i/(Nx)\right)^{(N-s)/2-(N-n)}\left(1+i/(Nx)\right)^{-(\overline{s}+N)/2}\\
&\cdot\left[\frac{-(\Re{s}+(N-n))}{x}F_{n,h,a}(Nx)\right.\\
&+\frac{i}{x^2}\left\{\left(\frac{1-s/N}{2}-\frac{N-n}{N}\right)\frac{1}{1-i/(Nx)}+\frac{1+\overline{s}/N}{2}\frac{1}{1+i/(Nx)}\right\}F_{n,h,a}(Nx)\\
&\left.+\sum_{k=0}^\infty\frac{(-n)_k(h)_kk2^{k+1}}{(a)_kk!}\left(-\frac{iN}{2}\right)\left(\frac{1}{1+iNx}\right)^{k+1}\right],
\end{align*}
and
\begin{align*}
&\Phi'(x)=\\
&e^{\pi\Im{s}/2}\left(\frac{2}{x}\right)^{\Re{s}+(N-n)}e^{-i/x}\\
&\cdot\left[\frac{-(\Re{s}+(N-n))}{x}{}_1F_1[h,a;2i/x]+\frac{i}{x^2}{}_1F_1[h,a;2i/x]\right.\\
&\left.-\sum_{k=0}^\infty\frac{(h)_k(2i)^kk}{(a)_kk!}\left(\frac{1}{x}\right)^{k+1}\right].
\end{align*}
This ends the proof.
\end{proof}

Now we prove Theorem \ref{rate of convergence for large x}.
Let us first prove the following result: for all $n \in \mathds{N}^*$, and for all symmetric and positive $n \times n$ matrices
$A$ and $B$ such that $\sup_{1 \leq i,j \leq n} |A_{i,j}| \leq \alpha$,
$\sup_{1 \leq i,j \leq n} |B_{i,j}| \leq \alpha$ and $\sup_{1 \leq i,j \leq n} |A_{i,j}- B_{i,j}| \leq \beta$ for
some $\alpha, \beta > 0$, one has
\begin{equation}
|\det(B) - \det(A)| \leq \beta n^2 \alpha^{n-1}. \label{A-B}
\end{equation}
Indeed, the following formula holds:
$$\det(B) - \det(A) = \int_0^1 \, d \lambda \, \operatorname{Diff} \, \det [A + \lambda(B-A)].(B-A)$$
where for $C:= A + \lambda(B-A)$, $\operatorname{Diff} \, \det  [C].(B-A)$ denotes the image of the matrix $B-A$
by the differential of the deteminant, taken at point $C$. Now, $C$ is symmetric, positive, and $|C_{i,j}|
\leq \alpha$ for all indices $i,j$, since $C$ is
a barycenter of $A$ and $B$, with positive coefficients. Moreover, the derivative of $C$ with respect to
the coefficent of indices $i,j$ is (up to a possible change of sign) the determinant of the $(n-1) \times
(n-1)$ matrix obtained
by removing the line $i$ and the column $j$ of $C$. By using the same
arguments as in the proof of inequality \eqref{bound on the determinant (in proof of key lemma)}, one can
easily deduce that this derivative is bounded by $\alpha^{n-1}$. Hence:
$$|\det(B) - \det(A)| \leq \int_0^1 \, d \lambda \, \alpha^{n-1} \sum_{1 \leq i,j \leq n} |B_{i,j} - A_{i,j}|$$
which imples \eqref{A-B}. Now, we can compare the determinants
of $(K_{[N]}(x_i,x_j))_{i,j=1}^n$ and $(K_\infty(x_i,x_j))_{i,j=1}^n$ for $x_1,\ldots,x_n>x_0$
by applying \eqref{A-B} to:
$$A_{i,j} = (x_i x_j)^{\Re(s) + 1} \, K_{[N]} (x_i,x_j),$$
$$B_{i,j} = (x_i x_j)^{\Re(s) + 1} \, K_{\infty} (x_i,x_j),$$
$$\alpha = C(x_0,s), \; \beta = C(x_0,s)/N.$$
Here, we use the bounds for $K_{[N]}$, $K_\infty$ and their difference given in Proposition \ref{convergence of K} and in Lemma \ref{rate of convergence of K_N}.
We obtain:
\begin{align*}
&\left|\det(K_{[N]}(x_i,x_j)_{i,j=1}^n)-\det(K_\infty(x_i,x_j)_{i,j=1}^n)\right|\\
\leq& \frac{1}{(x_1 \cdots x_n)^{2\Re(s)+2}} \, \frac{n^2}{N} \, (C(x_0,s))^n.
\end{align*}
This implies
\begin{align*}
&\left|P\left[\frac{\lambda_1(N)}{N}\leq x\right]-\det(I-K_\infty)|_{L_2(t,\infty)}\right|\\
\leq&\sum_{n=1}^\infty\frac{1}{n!}\int_{(x,\infty)^n}\left|\det(K_{[N]}(x_i,x_j)_{i,j=1}^n)-\det(K_\infty(x_i,x_j)_{i,j=1}^n)\right|dx_1\cdots dx_n\\
\leq&\sum_{n=1}^\infty\frac{1}{n!} \, \frac{n^2}{N} \left(\int_{(x,\infty)}\frac{C(x_0,s)}{y^{2\Re{s}+2}}dy\right)^n \\
\leq& \frac{1}{N} \sum_{n=1}^\infty\frac{n}{(n-1)!} \left(\int_{(x_0,\infty)}\frac{C(x_0,s)}{y^{2\Re{s}+2}}dy\right)^n
\leq C(x_0,s)/N,
\end{align*}
since the last sum is convergent and depends only on $x_0$ and $s$.

\section{Concluding remark about $U(N)$}\label{Translation to UN}

With the notations and results from Subsection \ref{Scaling Limits}, we know  that the distribution of $\lambda_1(N)$, the largest eigenvalue of a matrix in $H(N)$ under the distribution \eqref{Full Hua-Pickrell}, can be written as
\begin{equation}\label{largest ev distribution in HN}
P[\lambda_1(N)\leq a]=\exp\left(-\int_a^\infty\frac{\sigma(t)}{1+t^2}dt\right).
\end{equation}

Using the Cayley transform $H(N)\ni X\mapsto U=\frac{X+i}{X-i}\in U(N)$, we can map the generalized Cauchy measure from $H(N)$ to the measure \eqref{Hua-Pickrell on U} on $U(N)$. The inverse of the Cayley transform writes as
\begin{equation*}
\theta\longmapsto i\frac{e^{i\theta}+1}{e^{i\theta}-1}=\cot\left(\frac{\theta}{2}\right),
\end{equation*}
for $\theta\in[-\pi,\pi]$. $\theta=0$ is mapped to $\infty$ by definition. Using this application, equation (\ref{largest ev distribution in HN}) turns into:
\begin{equation}\label{largest ev distribution in UN}
P[\theta_1(N)\geq y]=\exp\left(-\frac{1}{2}\int_0^yd\phi\;\sigma\left(\cot\left(\frac{\phi}{2}\right)\right)\right),
\end{equation}
for $y=2\arccot(a)$, $y\in[0,2\pi]$, and $e^{i\theta_1(N)}=\frac{\lambda_1(N)+i}{\lambda_1(N)-i}$. $\theta_1(N)$ being here in $[0,2\pi]$ (and not in $[-\pi,\pi]$!). In other words, the distribution of the largest eigenvalue on the real line of a random matrix $H\in H(N)$ with measure \eqref{Full Hua-Pickrell}, maps to the distribution of the eigenvalue with smallest angle of a random matrix $U\in U(N)$ satisfying the law \eqref{Hua-Pickrell on U}. Here, smallest angle has to be understood as the eigenvalue which is closest to 1 looking counterclockwise on the circle from the point 1.

According to \cite{Bourgade-Nikeghbali-Rouault}, the eigenvalues $\{e^{i\theta_1},\ldots,e^{i\theta_N}\}$, (recall that $\theta_i\in[-\pi,\pi]$) of a random unitary matrix $U$, satisfying the law \eqref{Hua-Pickrell on U}, also determine a determinantal point process with correlation kernel
\begin{align}\label{correlation kernel on UN}
&K_N^U(e^{i\alpha},e^{i\beta})\\
&=d_N(s)\sqrt{w_U(\alpha)w_U(\beta)}\frac{e^{iN\frac{\alpha-\beta}{2}}Q_N^s(e^{-i\alpha})
Q_N^{\overline{s}}(e^{i\beta})-e^{-iN\frac{\alpha-\beta}{2}}Q_N^{\overline{s}}(e^{i\alpha})Q_N^s(e^{-i\beta})}
{e^{i\frac{\alpha-\beta}{2}}-e^{-i\frac{\alpha-\beta}{2}}},\nonumber
\end{align}
where $d_N(s)=\frac{1}{2\pi}\frac{(\overline{s}+1)_N(s+1)_N}{(2\Re{s}+1)_N N!}\frac{\Gamma(1+s)\Gamma(1+\overline{s})}{\Gamma(1+2\Re{s})}$, $Q_N^s(x)=\hg[s,-n,-n-\overline{s};x]$ and $w_U$ is the weight defined after (\ref{Hua-Pickrell on U}). If $N\rightarrow\infty$, the rescaled correlation kernel $\frac{1}{N}K_N^U(e^{i\alpha/N},e^{i\beta/N})$ converges to
\begin{align}\label{correlation kernel on Uinfinity}
&K^U(\alpha,\beta)\\
&=e(s)|\alpha\beta|^{\Re{s}}e^{-\frac{\pi}{2}\Im{s}(\textrm{Sgn}(\alpha)+\textrm{Sgn}(\beta))}\frac{e^{i\frac{\alpha-\beta}{2}}Q^s(-i\alpha)Q^{\overline{s}}(i\beta)-e^{-i\frac{\alpha-\beta}{2}}Q^{\overline{s}}(i\alpha)Q^s(-i\beta)}{\alpha-\beta},\nonumber
\end{align}
where $e(s)=\frac{1}{2\pi i}\frac{\Gamma(s+1)\Gamma(\overline{s}+1)}{\Gamma(2\Re{s}+1)^2}$, and $Q^s(x)={}_1F_1[s,2\Re{s}+1;x]$ (again according to \cite{Bourgade-Nikeghbali-Rouault}). In \cite{Bourgade-Nikeghbali-Rouault}, it is also shown that the kernel $K^U$ coincides up to multiplication by a constant with the limiting kernel $K_\infty$ from \eqref{definition of K infinity} if one changes the variables in (\ref{correlation kernel on Uinfinity}) to  $\alpha=\frac{2}{x}$ and $\beta=\frac{2}{y}$, $x,y\in\mathds{R}^\ast$. This not surprising because a scaling $x\mapsto Nx$ for the eigenvalues in the Hermitian case corresponds to a scaling $\alpha\mapsto\frac{\alpha}{N}$ for the eigenvalues in the unitary case as can be seen from the elementary fact that for $x\in\mathds{R}^*$, and $N\in\mathds{N}$, one has
\begin{equation}\label{transformation of the scaling by cayley, formula}
\frac{Nx+i}{Nx-i}=e^{\frac{2i}{Nx}+O(N^{-2})}.
\end{equation}

\begin{remark}
Note that because of the $O(N^{-2})$ term in the argument of (\ref{transformation of the scaling by cayley, formula}), it is not possible to give an identity involving the kernel $K_N$ of Theorem \ref{correlation Kernel} and the kernel (\ref{correlation kernel on UN}).
\end{remark}

\bibliographystyle{amsplain} 

\end{document}